\newtheorem{theorem}{Theorem}
\newtheorem{lemma}{Lemma}[section]
\newtheorem{thm}[lemma]{Theorem}
\newtheorem{proposition}[lemma]{Proposition}
\newtheorem{corollary}[lemma]{Corollary}
\theoremstyle{definition}
\newtheorem{definition}[lemma]{Definition}
\newtheorem*{example}{Example}
\theoremstyle{remark}
\newtheorem*{remark}{Remark}
\newtheorem*{remarks}{Remarks}
\numberwithin{equation}{section}
\newcommand{\Lf}{\widetilde{L}}
\newcommand{\Ln}{\widehat{\Delta}}
\newcommand{\tu}{\textup}
\newcommand{\tpdt}{\tfrac{d}{dt}}
\newcommand{\R}{{\mathbb R}}
\newcommand{\C}{{\mathbb C}}
\newcommand{\Q}{{\mathbb Q}}
\newcommand{\N}{{\mathbb N}}
\newcommand{\A}{{\widetilde{A}}}
\newcommand{\Lp}{{\Delta}}
\newcommand{\se}{\sigma_{\!\mathrm{ess}}}
\newcommand{\sd}{\sigma_{\!\mathrm{disc}}}
\newcommand{\lmess}{\lambda_0^{\tu{ess}}}
\newcommand{\al}{{\alpha}}
\newcommand{\be}{{\beta}}
\newcommand{\de}{{\delta}}
\newcommand{\ka}{{\kappa}}
\newcommand{\lm}{{\lambda}}
\newcommand{\si}{{\sigma}}
\newcommand{\as}[1]{\left\langle #1\right\rangle}
\newcommand{\ov}[1]{\overline{ #1}}
\newcommand{\ow}[1]{\widetilde{ #1}}
\newcommand{\oh}[1]{\widehat{ #1}}
\newcommand{\ab}[1]{\left( #1\right)}
\newcommand{\s}[1]{{#1}^{\mathrm{sym}}}
\newcommand{\p}{{q}}
\newcommand{\Hm}[1]{\leavevmode{\marginpar{\tiny%
$\hbox to 0mm{\hspace*{-0.5mm}$\leftarrow$\hss}%
\vcenter{\vrule depth 0.1mm height 0.1mm width \the\marginparwidth}%
\hbox to 0mm{\hss$\rightarrow$\hspace*{-0.5mm}}$\\\relax\raggedright
#1}}}
\begin{document}

\title[Volume Growth, Spectrum and Stochastic Completeness]{Volume Growth, Spectrum and Stochastic Completeness of Infinite Graphs}

\author{Matthias Keller}
\address{Mathematisches Institut \\
Friedrich Schiller Universit{\"a}t Jena \\
07743 Jena, Germany }
\email{m.keller@uni-jena.de}

\author{Daniel Lenz}
\address{Mathematisches Institut \\
Friedrich Schiller Universit{\"a}t Jena \\
07743 Jena, Germany }
\email{daniel.lenz@uni-jena.de}

\author{Rados{\l}aw K. Wojciechowski}
\address{York College of the City University of New York \\
Jamaica, NY 11451 \\ USA  }
\email{rwojciechowski@gc.cuny.edu}

\subjclass[2000]{Primary 39A12; Secondary 58J35}
\date{\today}

\begin{abstract}
We study the connections between volume growth, spectral properties and stochastic completeness of locally finite weighted graphs.  For a class of graphs with a very weak spherical symmetry we give a condition which implies both stochastic incompleteness and discreteness of the spectrum.  We then use these graphs to give some comparison results for both stochastic completeness and estimates on the bottom of the spectrum for general locally finite weighted graphs.
\end{abstract}

\maketitle

\section{Introduction}
The aim of this paper is to investigate the connections between volume growth, uniqueness of bounded solutions for the heat equation, and spectral properties for infinite weighted graphs.  To do so, we proceed in two steps.  We first establish these connections on a class of graphs with a very weak spherical symmetry.  We then give some comparison results for general graphs by using a notion of curvature.

A very general framework for studying operators on discrete measure spaces was recently established in \cite{KelLen09}.  We use this set up throughout with the additional assumption that the underlying weighted graphs are locally finite.

We first introduce the class of weakly spherically symmetric graphs which, compared with spherically symmetric graphs, can have very little symmetry.  Still, these graphs turn out to be accessible to a detailed analysis much as those with a full spherical symmetry.  More precisely, we can characterize them by the fact that:
\begin{itemize}
  \item Their heat kernels are spherically symmetric (Theorem~\ref{t:HK_wsp} in Section~\ref{s:HK}).
\end{itemize}
Moreover, for operators arising on these graphs we prove:
\begin{itemize}
  \item  An explicit estimate for the bottom of their spectrum and a criterion for the discreteness of the spectrum in terms of volume growth and the boundary of balls (Theorem~\ref{t:spectrum} in Section~\ref{s:spectrum}).
  \item  A characterization of stochastic completeness in terms of volume growth and the boundary of balls (Theorem~\ref{t:stochastic} in Section~\ref{s:stochastic}).
\end{itemize}
Both the estimate for the bottom of the spectrum and the condition for stochastic completeness involve the ratio of a generalized volume of a ball to its weighted boundary.
 In this sense, our estimates complement the classic lower bound on the bottom of the spectrum of the ordinary graph Laplacian in terms of Cheeger's constant given by Dodziuk in \cite{Do84} in the case of unbounded geometry.

These results give rise to examples of  graphs of polynomial volume growth which have positive bottom of the spectrum and are stochastically incomplete.  Therefore, as a surprising consequence, in the standard graph metric there are no direct analogues to the theorems of Grigor{\cprime}yan, relating volume growth and stochastic completeness \cite{Gri99}, and of Brooks, relating volume growth and the bottom of the essential spectrum \cite{Br81}, from the manifold setting.   For stochastic completeness this was already observed in \cite{Woj10}.  These examples are studied at the end of the paper, in Section~\ref{s:Application}.

We now turn to the second step of our investigation, i.e., the comparison of  general weighted graphs to weakly spherically symmetric ones. In this context, we provide:
\begin{itemize}
  \item heat kernel comparisons (Theorem~\ref{t:HK_comparison} in Section~\ref{s:HK})
  \item comparisons for the bottom of the spectrum (Theorem~\ref{t:spectral_comparison} in Section~\ref{s:spectrum})
  \item comparisons for stochastic completeness (Theorem~\ref{t:sc_comparison} in Section~\ref{s:stochastic}).
\end{itemize}
The heat kernel comparison, Theorem~\ref{t:HK_comparison}, is given in the spirit of results of Cheeger and Yau \cite{ChYau81}. However, in contrast to other works  done for graphs in this area, e.g. \cite{Ura97,Woj09}, as we use weakly spherically symmetric graphs, we require very little symmetry for our comparison spaces. The comparisons are then done with respect to a certain curvature (and, in the case of stochastic completeness, also to potential) growth.  We combine these inequalities with an analogue to a theorem of Li \cite{Li86}, which was recently proven in our setting in \cites{KLVW, HKLW}, to obtain comparisons for the bottom of the spectrum, Theorem~\ref{t:spectral_comparison}.  
The spectral comparisons give some analogues to results of Cheng \cite{Ch75} and extend inequalities found for graphs in \cites{Br91, Ura99, Zuk99}.  The comparison results for stochastic completeness, Theorem~\ref{t:sc_comparison}, are inspired by work of Ichihara \cite{Ich82} and are found in Section~\ref{s:stochastic}.

The article is organized as follows: in the next section we introduce the set up. This is followed by the heat kernel theorems, Theorem~\ref{t:HK_wsp} and Theorem~\ref{t:HK_comparison}, in Section~\ref{s:HK}, the spectral estimates, Theorem~\ref{t:spectrum} and Theorem~\ref{t:spectral_comparison}, in Section~\ref{s:spectrum} and the considerations about stochastic completeness, Theorem~\ref{t:stochastic} and Theorem~\ref{t:sc_comparison}, in Section~\ref{s:stochastic}.  The proofs are given within each section.  In Section~\ref{s:Application}, we discuss the applications to unweighted graph Laplacians and give the examples of polynomial volume growth announced above.  Finally, in Appendix \ref{s:Appendix}, we prove some general facts concerning commuting operators which are used in the proof of Theorem~\ref{t:HK_wsp}.

\section{The set up and basic facts} \label{s:Setup}
Our basic set up, which is included in \cites{KelLen09, KelLen10}, is as follows: let $V$ be a countably infinite set and $m:V\to(0,\infty)$ be a measure of full support.  Extending $m$ to all subsets of $V$ by countable additivity, $(V,m)$ is then a measure space. The map $b : V \times V \to [0, \infty)$, which characterizes the edges, is symmetric and has zero diagonal.  If $b(x,y)>0$, then we say that $x$ and $y$ are \emph{neighbors},  writing $x \sim y$, and think of $b(x,y)$ as the weight of the edge connecting $x$ and $y$. Moreover, let $c:V\to[0,\infty)$ be a map which we call the \emph{potential} or  \emph{killing term}. If $c(x)>0$, then we think of $x$ as being connected to an imaginary vertex at infinity with weight $c(x)$. We call the quadruple $(V,b,c,m)$  a \emph{weighted graph}. Whenever  $c\equiv 0$, we denote the weighted graph $(V,b,0,m)$ as the triple $(V,b,m)$. If, furthermore, $b:V\times V\to\{0,1\}$, then we speak of $(V,b,m)$ as an \emph{unweighted graph}.

We say that a weighted graph is \emph{connected} if, for any two vertices $x$ and $y$, there exists a sequence of vertices $(x_i)_{i=0}^n$ such that $x_0 = x$, $x_n = y$ and $x_i \sim x_{i+1}$ for $i=0,1, \ldots, n-1.$  We say that a weighted graph is \emph{locally finite} if every $x\in V$ has only finitely many neighbors, i.e.,  $b(x,y)$ vanishes for all but finitely many $y\in V$.

\bigskip

Throughout the paper we \textbf{assume} that all  weighted graphs $(V,b,c,m)$ in question are  connected and locally finite.

\bigskip

In this setting, weighted graph Laplacians and   Dirichlet forms on discrete measure spaces were recently studied in  \cite{KelLen09}, whose notation we closely follow (see also  the seminal work \cite{BD} on finite graphs and \cite{FOT} for background on general Dirichlet forms).
Let $C(V)$ denote the set of all functions from $V$ to $\R$ and let
\[ \ell^2(V,m) = \{ f\in C(V)  \ | \ \sum_{x \in V} f^2(x) m(x) < \infty \} \]
denote the Hilbert space of functions square summable with respect to $m$ with inner product given by
\[ \as{f,g} = \sum_{x \in V} f(x)g(x)m(x). \]
We then define the form $Q$ with domain of definition
\[ D(Q)=\ov{C_c(V)}^{ \as {\cdot,\cdot}_{Q}}, \]
where  $C_c(V)$ denotes the space of finitely supported functions, the closure is taken with respect to $\as{\cdot,\cdot}_{Q}:={\as{\cdot,\cdot} + Q(\cdot,\cdot)}$ in $\ell^{2}(V,m)$, and $Q$ acts by
\[ Q(f,g)=\frac{1}{2}\sum_{x,y\in V} b(x,y) \big( f(x) - f(y) \big) \big( g(x) - g(y) \big) + \sum_{x\in V} c(x) f(x)g(x).\]
Such forms are regular Dirichlet forms on the measure space $(V,m)$, see \cite{KelLen09}.

By general theory (see, for instance, \cite{FOT}) there is a selfadjoint positive operator $L$ with domain $ D(L) \subseteq \ell^2(V,m)$ such that $Q(f,g) = \as{Lf, g}$ for $f\in D(L)$ and $g\in D(Q)$.
By \cite[Theorem~9]{KelLen09} we know that  $L$ is a restriction of the \emph{formal Laplacian} $\Lf$ which acts as
\[ \Lf f(x) = \frac{1}{m(x)} \sum_{y \in V} b(x,y) \big( f(x)-f(y) \big) + \frac{c(x)}{m(x)} f(x) \]
and, as $(V,b,c,m)$ is locally finite, is defined for all functions in $C(V)$. Alternatively, as shown in \cite{KelLen09} as well, one can consider $L$ to be an extension - the so-called Friedrichs extension -  of the operator $L_0$ defined on $C_c (V)$ by
$$ L_0 g = {\ow L} g.$$


\bigskip

Two very prominent examples from the unweighted setting are the \emph{graph Laplacian} $\Lp$ given by the additional assumptions that $b:V\times V\to \{0,1\}$, $c \equiv 0$, and $m \equiv 1$ so that
\[ \Lp f(x) = \sum_{y \sim x} \big( f(x) - f(y) \big) \]
and the \emph{normalized graph Laplacian} $\Ln$ given by the additional assumptions that $b:V\times V\to \{0,1\}$, $c \equiv 0$, and $m \equiv \deg$  so that
\[ \Ln f(x) = \frac{1}{\deg(x)} \sum_{y \sim x} \big( f(x) - f(y) \big). \]
Here, $\deg(x) = | \{ y \ | \ y \sim x \} |$ for $x\in V$, where $|\{\cdot \}|$ denotes the cardinality of a set, is finite for all $x\in V$ by the local finiteness assumption.

In the following sections, we will compare our results to those for $\Lp$ and $\Ln$ found in the literature. Furthermore, we will illustrate some of our results for the operator $\Lp$ at the end of the paper in Section~\ref{s:Application}.  We note that $\Lp$ is bounded on $\ell^2(V) := \ell^2(V,1)$ if and only if $\deg$ is bounded, while $\Ln$ is always bounded on $\ell^2(V,\deg)$.

\bigskip

We will often fix a vertex $x_0$ and consider spheres and balls
\[S_r= S_r(x_0) = \{ x \ | \ d(x,x_0) = r \} \quad \tu{ and } \quad  B_r=B_r(x_0) = \bigcup_{i=0}^r S_i(x_0) \]
around $x_0$ of radius $r$.  Here, $d(x,y)$ is the usual combinatorial metric on graphs, that is, the number of edges in the shortest path connecting $x$ and $y$.

\begin{definition}
The \emph{outer} and \emph{inner} \emph{curvatures} $\ka_{\pm}:V\to[0,\infty)$ are given by
\begin{align*}
\ka_{\pm}(x)=\frac{1}{m(x)}\sum_{y\in S_{r\pm1}}b(x,y) \quad \textrm{ for } x \in S_r.
\end{align*}
\end{definition}

\begin{remark}
We refer to these quantities as curvatures as, for $c \equiv 0$, $\Lf d(x_0,\cdot)=\ka_{-}(\cdot) - \ka_{+}(\cdot)$ is often referred to as a curvature-type quantity for graphs, see \cite{DoKa88,Hu,Web10}.  Moreover, in light of our Theorem~\ref{t:HK_wsp} and \cite[Proposition~2.2]{ChYau81}, as well as our comparison results, Theorems~\ref{t:HK_comparison}, \ref{t:spectral_comparison}, and \ref{t:sc_comparison}, and their counterparts in the manifold setting which can be found in \cite{ChYau81, Ch75, Ich82}, it seems reasonable to relate $\ka_{\pm}$ to a type of curvature on a manifold.

Several other notions of curvature have been introduced for planar graphs \cite{Hi01, BaPe01, BaPe06}, cell complexes \cite{For}, and general metric measure spaces \cites{BStu, LV, Oll, StuI, StuII}.  See also the recent work on Ricci curvature of graphs \cite{BJL, JL, LY}.  It is not presently clear how these different notions of curvature are related.
\end{remark}

\begin{definition}
We call a function $f:V\to \R$  \emph{spherically symmetric} if  its values depend only on the distance to $x_0$, i.e., if $f(x) = g(r)$ for $x\in S_r (x_0)$ for some function $g$ defined on $\N_0 = \{0, 1, 2, \ldots \}$. In this case, we will often write $f(r)$ for $f(x)$ whenever $x\in S_r(x_0)$ and set, for convenience, $f(-1)=0$.
\end{definition}

Let $\A$ be the operator on $C(V)$  that averages a function over a sphere around $x_0$, i.e., \begin{align*}
(\A f)(x)=\frac{1}{m(S_{r})}\sum_{y\in S_{r}}f(y)m(y)
\end{align*}
for $f:V\to\R$ and $x\in S_{r}$.   This operator is a projection whose  range  is the spherically symmetric functions. In particular, a function $f$ is spherically symmetric if and only if $\A f=f$.
Moreover, the restriction $A$ of $\A$  to $\ell^{2}(V,m)$ is bounded and symmetric. Therefore, $A$ is an orthogonal projection.

\begin{definition}\label{sphericalgraphs}
Let the \emph{normalized potential} $\p:V\to[0,\infty)$ be given by $$\p=\frac{c}{m}.$$
We call a weighted graph $(V,b,c,m)$ \emph{weakly spherically symmetric} if it contains a vertex $x_0$ such that $\ka_{\pm}$ and $\p$ are spherically symmetric functions.
\end{definition}

We call the vertex $x_0$ in the definition above the \emph{root} of $(V,b,c,m)$.
\begin{remark}
We will often suppress the dependence on the vertex $x_0$. Mostly, we will denote weakly spherically symmetric graphs by $(\s V,\s b,\s c,\s m)$ although $\s b$, $\s c$ and $\s m$ might not have any obvious symmetries at all.  Furthermore, we will denote, as needed, the corresponding curvatures and potential by $\s \ka_\pm$ and $\s q$ and the Laplacian on such a graph by $\s L$.
\end{remark}

For a weakly spherically symmetric graph the operator $\Lf$ acts on a spherically symmetric function $f$ by
\begin{align*}
\Lf f(r)=\ka_{+}(r)(f(r)-f(r+1))+\ka_{-}(r)(f(r)-f(r-1))+\p(r)f(r).
\end{align*}
Moreover, a straightforward calculation yields that
\begin{equation} \label{eq:curvature and spheres}
\ka_{+}(r)m(S_{r}) = \ka_-(r+1) m(S_{r+1}) \quad\mbox{for all $r\in \N_0$}.
\end{equation}


\smallskip

Let us give some examples to illustrate the definition of weakly spherically symmetric graphs. 

\begin{example}
(a) We call a  weighted graph $(V,b,c,m)$ \emph{spherically symmetric} with respect to $x_0$  if for each $x,y\in S_r(x_0)$, $r\in\N_0$, there exists a weighted graph automorphism which leaves $x_0$ invariant and maps $x$ to $y$. In this case, the weighted graph is weakly spherically symmetric.

(b) If the functions $k_{\pm}:=\ka_{\pm}m$, the potential $c$ and the measure $m$ are all spherically symmetric functions, then the weighted graph is  weakly spherically symmetric.
On the other hand, given that the measure $m$ is spherically symmetric and the graph is weakly spherically symmetric, then $k_{\pm}$ and $c$ must be spherically symmetric.
\end{example}

\begin{center}
\begin{figure}[!h]
\includegraphics[scale=0.42]{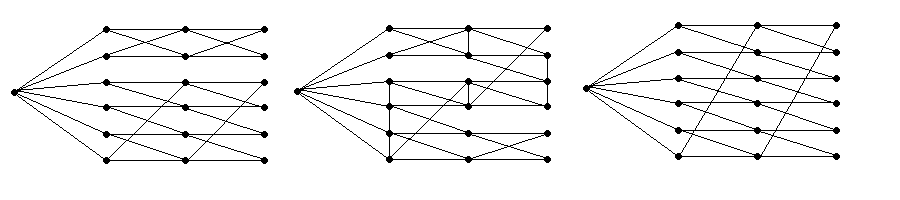}
\caption{The first two unweighted graphs are only weakly spherically symmetric while the third one is spherically symmetric.}
\label{f:wsp_vs_sp}
\end{figure}
\end{center}

\begin{remark}
The second example shows that there are very little assumptions on the symmetry of the geometry in the  weakly spherically symmetric case. The difference is illustrated in Figure~\ref{f:wsp_vs_sp}. There, unweighted graphs, that is, with $c \equiv 0$ and $b: V \times V \to \{0,1\}$, and constant measure are plotted up to the third sphere. The first and second graphs are only weakly spherically symmetric, while the third one is spherically symmetric. However, given the lack of assumptions on connections within a sphere and the structure of connections between the spheres, the freedom in the weakly spherically symmetric case is even much greater than illustrated in the figure. Indeed, we do not have any assumptions on the vertex degree as long as the outer and inner curvatures are constant on each sphere.
\end{remark}

\section{Heat Kernel Comparisons}\label{s:HK}

\subsection{Notations and definitions}

Let a weighted graph $(V,b,c,m)$ be given.  For the operator $L$ acting on $D(L)\subseteq\ell^2(V,m)$ we know, by the discreteness of the underlying space, that there exists a map
$$p:[0,\infty)\times V\times V\to\R,$$
which we call the \emph{heat kernel} associated to $L$, with
\[ e^{-tL}f(x)=\sum_{y\in V} p_t(x,y)f(y)m(y)\]
for all $f\in \ell^2(V,m)$.  Here, $e^{-tL}$ is the operator semigroup of $L$ which is defined via the spectral theorem. By direct computation, one sees that $p_{t}(x,y)=e^{-tL}\de_{y}(x)$, for $x,y\in V$ and $t\geq0$, where $\de_y$ is a $\ell^{1}$-normalized delta function, i.e., $\de_{y}(x)=\frac{1}{m(y)}$ if $x=y$ and zero otherwise.
\smallskip

\begin{definition}
We say that the heat kernel $p$ of an operator $L$ is \emph{spherically symmetric} if there is a vertex $x_0$ such that the averaging operator $A$ and the semigroup $e^{-tL}$ commute for all $t\geq0$.
\end{definition}
In particular, in this case, the function $p_{t}(x_0,\cdot)$ is spherically symmetric for each $t$ and, whenever this is the case, we write
\begin{align*}
    \s p_{t}(r)=p_{t}(x_0,x)
\end{align*}
for $x\in S_r$ and $r\in\N_0$.

\bigskip

In order to compare a general weighted graph with a weakly spherically symmetric one, we introduce the following terminology.

\begin{definition}\label{d:C_comp}
A weighted graph $(V, b,c,m)$ has \emph{stronger \tu{(respectively,} weaker\tu{)} curvature growth} with respect to $x_0 \in V$ than a weakly spherically symmetric graph $(\s V, \s b,\s c,\s m)$ with root $o$ if,  $m(x_0) = \s m(o)$ and if, for all $r \in \N_0$ and $x\in S_r\subset V$,
\begin{align*}
\ka_+(x)\geq \s \ka_+(r) \; &\tu{ and } \; \ka_-(x)\leq \s \ka_-(r)\\
(\tu{respectively, }\;\ka_+(x)\leq \s \ka_+(r) \; & \mbox{ and }\; \ka_-(x)\geq \s \ka_-(r)).
\end{align*}
\end{definition}


\begin{remark}
The assumption $m(x_0) = \s m( o)$ is a normalization condition, necessary for our comparison theorems below, which states that the same amount of heat enters both graphs at the root.  This follows since, in general, $p_0(x,y) = \frac{1}{m(x)}$ if $x=y$ and 0 otherwise.
\end{remark}

\subsection{Theorems and remarks}

There are two main results about heat kernels which are proven in this section. The first one, which is an analogue for Proposition 2.2 in \cite{ChYau81} from the manifold setting, is that weakly spherically symmetric graphs can be characterized by the symmetry of the heat kernel.

\begin{theorem}\label{t:HK_wsp}\emph{(Spherical symmetry of heat kernels)} A weighted graph  $(V,b,c,m)$ is  weakly spherically symmetric  if and only if the heat kernel is spherically symmetric.
\end{theorem}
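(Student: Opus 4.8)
The plan is to prove both implications by relating the commutation of the averaging projection $A$ with the semigroup $e^{-tL}$ to the commutation of $A$ with the formal Laplacian $\Lf$ (equivalently, with $L$), and then to compute the commutator $[\Lf, \A]$ explicitly on $C(V)$ and read off exactly when it vanishes.

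First I would establish the easy direction. If $(V,b,c,m)$ is weakly spherically symmetric, then by the displayed action of $\Lf$ on spherically symmetric functions (given just before \eqref{eq:curvature and spheres}), $\Lf$ maps spherically symmetric functions to spherically symmetric functions; that is, $\Lf$ preserves $\ran \A$. A short dual computation, using the symmetry of $b$ and the identity $\ka_+(r) m(S_r) = \ka_-(r+1) m(S_{r+1})$, shows that $\Lf$ also preserves the orthogonal complement $\ran(1-\A)$ in $\ell^2(V,m)$ — equivalently, that $L$ commutes with the orthogonal projection $A$. Since $A$ commutes with $L$, it commutes with every bounded function of $L$, in particular with $e^{-tL}$ for all $t\ge 0$, by the spectral theorem. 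Hence the heat kernel is spherically symmetric. This is where I expect to invoke the appendix on commuting operators advertised in the introduction, to make rigorous the passage from the (possibly unbounded) operator $L$ to the bounded semigroup, and to handle the fact that $\A$ need not preserve $D(L)$ a priori; the clean statement is that $A$ reduces $L$ if and only if $A$ commutes with $e^{-tL}$ for all $t$.

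For the converse, suppose the heat kernel is spherically symmetric, i.e. $A e^{-tL} = e^{-tL} A$ for all $t \ge 0$. Differentiating at $t=0$ (on the appropriate core, e.g. $C_c(V)\subseteq D(L)$, using the appendix to justify that $A$ then maps this core into $D(L)$ and that differentiation is legitimate) yields $A L f = L A f$, and since $L$ is a restriction of $\Lf$ this gives $\A \Lf f = \Lf \A f$ for $f \in C_c(V)$; by locality this extends to all $f\in C(V)$. Now I would apply the commutation identity to well-chosen test functions. Testing on $\de_y$ for $y \in S_r$ and evaluating the resulting equation on various spheres, or equivalently testing on indicator functions $\mathbf 1_{S_r}$ and $\mathbf 1_{B_r}$, forces: (i) $\sum_{y\in S_{r+1}} b(x,y)$ to be independent of $x\in S_r$ for each $r$ — this is precisely spherical symmetry of $\ka_+$ after dividing by $m(x)$ — wait, one must be careful since the sum is $\ka_+(x) m(x)$, so testing against the right combination isolates $\ka_+(x)$ itself; (ii) similarly spherical symmetry of $\ka_-$; and (iii) spherical symmetry of $c(x)/m(x) = \p(x)$ from the zeroth-order term. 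Thus $(V,b,c,m)$ is weakly spherically symmetric with root $x_0$.

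The main obstacle will be the functional-analytic bookkeeping in the converse direction rather than the algebra: $\A$ is only defined as a projection on all of $C(V)$, its $\ell^2$-restriction $A$ is a bounded orthogonal projection, but there is no reason $A$ should preserve $D(L)$, so the differentiation $\tpdt\big|_{t=0} A e^{-tL} = AL$ must be carried out on a suitable invariant core and justified carefully — this is exactly the content deferred to Appendix~\ref{s:Appendix} on commuting operators. Once the equation $\A\Lf = \Lf\A$ on $C(V)$ is in hand, extracting the three symmetry conditions is a direct finite computation using local finiteness and the explicit form of $\Lf$, together with \eqref{eq:curvature and spheres} to see the conditions are consistent.
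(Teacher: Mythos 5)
Your proposal is correct and follows essentially the same route as the paper: you characterize weak spherical symmetry by the commutation of the (formal) Laplacian with the averaging projection $A$ on finitely supported functions --- extracted, as in Lemma~\ref{l:characterization}, by testing the commutator on the indicators $1_{S_r}$, which indeed isolates $\ka_\pm$ and $\p$ directly --- and you delegate the passage between commutation with $L_0$/$L$ and commutation with $e^{-tL}$ to the abstract results of Appendix~\ref{s:Appendix} (Corollary~\ref{main-appendix}, i.e.\ Lemma~\ref{l:characterization2}), noting that $A$ preserves $C_c(V)$ by local finiteness. Your differentiation of the semigroup at $t=0$ in the converse is just a hands-on version of the step the paper handles through the form/Friedrichs-extension equivalences in that appendix, so there is no substantive difference.
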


\begin{remark}
It is clear that the heat kernel on a spherically symmetric graph is spherically symmetric.  However, the theorem also implies that the heat kernels of all the graphs illustrated in Figure~\ref{f:wsp_vs_sp} are spherically symmetric.
\end{remark}

\smallskip
The second main result of this section is a  heat kernel comparison between weakly spherically symmetric graphs and general weighted graphs.  These comparisons were originally inspired by \cite{ChYau81} and can be found in \cite{Woj09} for the unweighted graph Laplacian and trees.  See also \cite{Ura97} for related results in the case of the unweighted graph Laplacian, regular trees, and heat kernels with a discrete time parameter.

\begin{theorem}\label{t:HK_comparison}\emph{(Heat kernel comparison with weakly spherically symmetric graphs)}
If a weighted graph $(V,b,m)$ with heat kernel $p$ has stronger (respectively, weaker) curvature growth than a weakly spherically symmetric graph  $(\s V, \s b,\s m )$ with heat kernel $\s p$, then for $x\in S_{r}(x_0)\subset V$, $r\in\N_0$, and $t\geq0$,
\begin{align*}
\s p_{t}(r)\geq p_{t}(x_0,x) \quad  \left(\mbox{respectively, }\s p_{t}(r)\leq p_{t}(x_0,x)\right).
\end{align*}
\end{theorem}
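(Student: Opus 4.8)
The plan is to exploit the spherical symmetry of $\s p$ established in Theorem~\ref{t:HK_wsp} and to run a comparison argument on the finite balls $B_n = B_n(x_0)$, letting $n \to \infty$ at the end. First I would fix $n$ and consider the Dirichlet restrictions $L^{(n)}$ of $L$ to $\ell^2(B_n, m)$ (Dirichlet boundary conditions outside $B_n$), with heat kernels $p^{(n)}_t(x,y)$; by monotone convergence of the associated forms (or by standard domain-exhaustion results for regular Dirichlet forms) one has $p^{(n)}_t(x_0,x) \to p_t(x_0,x)$ as $n \to \infty$ for every fixed $x$ and $t$, and likewise $\s p^{(n)}_t \to \s p_t$ on the comparison graph. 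Thus it suffices to prove the inequality $\s p^{(n)}_t(r) \ge p^{(n)}_t(x_0,x)$ (resp. $\le$) for each $n$, where now everything lives on a finite graph and all the semigroups are genuine matrix exponentials, so differentiation in $t$ is unproblematic. On the comparison side the restricted heat kernel $\s p^{(n)}$ is still spherically symmetric — the averaging projection $A$ commutes with $e^{-t L^{(n),\s{}}}$ because the Dirichlet restriction to a ball $B_n$ respects the sphere decomposition — so $\s p^{(n)}_t(\cdot)$ is a function of $r$ alone.

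The core of the argument is a parabolic maximum/comparison principle. I would set $u_t(x) := p^{(n)}_t(x_0,x)$ and $w_t(x) := \s p^{(n)}_t(r)$ for $x \in S_r$, i.e. $w_t$ is the spherically symmetric function on $V$ obtained by transplanting $\s p^{(n)}_t$ via the radial coordinate, and study the difference $h_t := w_t - u_t$ on $B_n$. Both satisfy $h_0 \equiv 0$ (using the normalization $m(x_0) = \s m(o)$, which forces $p^{(n)}_0(x_0,x_0) = \s p^{(n)}_0(0)$ and $0$ elsewhere) and both vanish on the complement of $B_n$. The function $u$ solves $\partial_t u = -\Lf u$ on $B_n$. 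For $w$ one computes, using the radial action of $\s L$ recorded just after \eqref{eq:curvature and spheres}, that
\begin{align*}
\partial_t w_t(x) + \Lf w_t(x) &= -\s L \s p^{(n)}_t(r) + \ka_+(x)\big(w_t(r) - w_t(r+1)\big) \\
&\quad + \ka_-(x)\big(w_t(r) - w_t(r-1)\big) \\
&= \big(\ka_+(x) - \s\ka_+(r)\big)\big(\s p^{(n)}_t(r) - \s p^{(n)}_t(r+1)\big) \\
&\quad + \big(\ka_-(x) - \s\ka_-(r)\big)\big(\s p^{(n)}_t(r) - \s p^{(n)}_t(r-1)\big),
\end{align*}
so $h_t$ satisfies $\partial_t h_t + \Lf h_t = F_t$ with a source term $F_t(x)$ that is the right-hand side above. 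In the "stronger curvature growth" case $\ka_+(x) - \s\ka_+(r) \ge 0$ and $\ka_-(x) - \s\ka_-(r) \le 0$, so if one knows the monotonicity $\s p^{(n)}_t(r) \ge \s p^{(n)}_t(r+1)$ (the radial heat kernel is nonincreasing in $r$) then both products are $\ge 0$, giving $F_t \ge 0$; the parabolic maximum principle on the finite graph $B_n$ then yields $h_t \ge 0$, which is the claim. The weaker-curvature case is symmetric with the inequalities reversed.

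The main obstacle is establishing the radial monotonicity $\s p^{(n)}_t(r) \ge \s p^{(n)}_t(r+1)$ for all $r$ and $t$ on the weakly spherically symmetric graph, since the comparison estimate feeds on it. I would prove this by its own parabolic comparison / induction-in-$r$ argument: let $\s v_t(r) := \s p^{(n)}_t(r) - \s p^{(n)}_t(r+1)$; then $\s v_0 \ge 0$ (it equals $1/m(x_0)$ at $r=0$ and is $0$ otherwise), $\s v_t$ satisfies a linear parabolic system in $r$ whose off-diagonal coefficients, by \eqref{eq:curvature and spheres} and the nonnegativity of $\ka_\pm$, have the right sign (a discrete analogue of the fact that $\partial_t$ of a difference quotient again solves a parabolic equation with nonnegative kernel), and one concludes $\s v_t \ge 0$ by the minimum principle for the corresponding $M$-matrix semigroup. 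An equivalent and perhaps cleaner route is to observe that $r \mapsto \s p^{(n)}_t(r)$ is, by Theorem~\ref{t:HK_wsp}, the heat kernel of a weighted birth–death chain on $\{0,1,\dots,n\}$ — a Jacobi (tridiagonal) matrix with nonpositive off-diagonal entries — and monotonicity of such heat kernels in the space variable is classical; one then only needs to check that the passage to the limit $n\to\infty$ preserves all the inequalities, which follows from the pointwise convergence noted above together with the fact that non-strict inequalities are preserved under limits.
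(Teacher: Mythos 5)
Your proposal is correct and follows essentially the same route as the paper: transplant the restricted (Dirichlet) heat kernel of the weakly spherically symmetric graph onto the general graph, use the radial decay of that kernel together with the curvature comparison to show the difference is a supersolution of the heat equation on the ball, conclude by the parabolic minimum principle (Lemma~\ref{l:maximum}), and let the exhaustion parameter tend to infinity. The only real difference is that the paper simply imports the radial monotonicity as Lemma~\ref{l:decay} from \cite{Woj09}, whereas you sketch its proof; your sketch is consistent with how that lemma is actually proved.
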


\subsection{Proofs of Theorems~\ref{t:HK_wsp} and \ref{t:HK_comparison}}
We start by developing the ideas necessary for the proof of Theorem~\ref{t:HK_wsp}.
We first characterize the class of weakly spherically symmetric graphs using $\A$, the averaging operator, in the following way.

\begin{lemma}\label{l:characterization} Let  $(V,b,c,m)$ be a  weighted graph and $x_0\in V$. Let $\A$ be the averaging operator associated to $x_0$.  Then, the following assertions  are equivalent:
\begin{itemize}
\item [(i)] $(V,b,c,m)$ is weakly spherically symmetric, i.e., $\ka_{\pm}$ and  $\p$ are spherically symmetric functions.
\item [(ii)] $\ow L$ commutes with $\A$, i.e., $\A \Lf f = {\ow L} \A f$ for all $f\in C (V)$.
\item [(iii)] $L_0$ commutes with $A$ on $C_c(V)$, i.e., $A L_0 g = L_0 A g $ for all $g \in C_c(V)$.
\end{itemize}
\begin{proof}
The direction (i)$\Longrightarrow$(ii) follows by straightforward computation using the formulas
$\ka_+(r)m(S_{r}) = \ka_-(r+1)m(S_{r+1})$, see (\ref{eq:curvature and spheres}).

As $A$ and $L$ are restrictions of $\A$ and $\Lf$, their matrix elements agree. This gives the direction (ii)$\Longrightarrow$(iii).

We finally turn to (iii)$\Longrightarrow$(i). The function   $1_{S_r}$, which is one on $S_r$ and zero elsewhere, satisfies
\begin{align*}
L_0 A1_{S_r}(x)=L_0 1_{S_r}(x)
&=\left\{\begin{array}{ll}
\ka_{+}(x)+\ka_{-}(x)+\p(x) & \mbox{if } x\in S_{r}\\
-\ka_{\mp}(x)& \mbox{if } x\in S_{r\pm 1}\\
0&\mbox{otherwise}\end{array}\right.
\end{align*}
and
\begin{align*}
A L_0 1_{S_r}(x)=&\left\{\begin{array}{ll}
\frac{1}{m(S_{r})}\sum_{y\in S_r}(\ka_{+}(y)+\ka_{-}(y)+\p(y))m(y) & \mbox{if }x\in S_{r}\\
-\frac{1}{m(S_{r\pm1})}\sum_{y\in S_{r\pm1}}\ka_{\mp}(y)m(y)& \mbox{if }x\in S_{r\pm 1}\\
0&\mbox{otherwise}.\end{array}\right.
\end{align*}
By (iii), these two expressions must be equal which yields that $\ka_{\pm}$ and $\p$ must be spherically symmetric functions which is (i).
\end{proof}
\end{lemma}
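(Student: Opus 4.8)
The plan is to prove the cycle (i) $\Rightarrow$ (ii) $\Rightarrow$ (iii) $\Rightarrow$ (i). The middle implication is essentially free: $A$ is by definition the restriction of $\A$ to $\ell^2(V,m)$ and $L_0$ is the restriction of $\Lf$ to $C_c(V)$, and $C_c(V)$ is invariant under both operators (under $A$ because every sphere $S_r$ is finite, so finitely supported functions average to finitely supported functions; under $L_0$ by local finiteness). Hence the operator identity $\A\Lf = \Lf\A$ on $C(V)$ restricts verbatim to $A L_0 = L_0 A$ on $C_c(V)$.

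For (i) $\Rightarrow$ (ii), I would fix $f \in C(V)$ and $x \in S_r$ and compare $\A\Lf f(x)$ with $\Lf\A f(x)$. Since $\A f$ is spherically symmetric and $\ka_\pm,\p$ are spherically symmetric, the right-hand side is the three-term expression $\ka_+(r)\big((\A f)(r)-(\A f)(r+1)\big) + \ka_-(r)\big((\A f)(r)-(\A f)(r-1)\big) + \p(r)(\A f)(r)$. For the left-hand side I would write, for $y \in S_r$, the sum defining $\Lf f(y)$ as contributions from neighbors in $S_{r-1}$, $S_r$ and $S_{r+1}$ plus the potential term, then average over $y \in S_r$ against the weights $m(y)/m(S_r)$. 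The edges internal to $S_r$ contribute zero since each is counted with opposite signs; the potential term gives $\p(r)(\A f)(r)$ by spherical symmetry of $\p$. It then remains to recognize that the average over $S_r$ of the total outgoing weight to $S_{r+1}$ equals $\ka_+(r)$, and that the weighted average of the $f$-values over $S_{r+1}$ that appears may be replaced by $(\A f)(r+1)$; this replacement, and the bookkeeping of the normalizing constants, is exactly what the flux identity (\ref{eq:curvature and spheres}), $\ka_+(r)m(S_r)=\ka_-(r+1)m(S_{r+1})$, provides. The same applies on the $S_{r-1}$ side, and matching terms yields the commutation.

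For (iii) $\Rightarrow$ (i), I would test the commutation on the indicators $1_{S_r} \in C_c(V)$. Since $A 1_{S_r} = 1_{S_r}$, the identity $A L_0 1_{S_r} = L_0 A 1_{S_r}$ becomes the assertion that $L_0 1_{S_r}$ is spherically symmetric. Computing directly, $L_0 1_{S_r}(x) = \Lf 1_{S_r}(x)$ equals $\ka_+(x)+\ka_-(x)+\p(x)$ for $x \in S_r$, $-\ka_\mp(x)$ for $x \in S_{r\pm 1}$, and $0$ elsewhere, so spherical symmetry of $L_0 1_{S_r}$ forces $\ka_+ + \ka_- + \p$ to be constant on $S_r$, $\ka_-$ to be constant on $S_{r+1}$, and $\ka_+$ to be constant on $S_{r-1}$; letting $r$ range over $\N_0$ shows $\ka_+$, $\ka_-$ and $\p$ are each spherically symmetric, which is (i). The only genuine computation is in (i) $\Rightarrow$ (ii), and its crux — a discrete divergence theorem converting a sphere average of edge weights into $\ka_\pm(r)$ via (\ref{eq:curvature and spheres}) — is more careful bookkeeping than real difficulty; the other two implications are formal.
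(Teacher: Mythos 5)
Your proposal is correct and follows the paper's own route: the same cycle (i)$\Rightarrow$(ii)$\Rightarrow$(iii)$\Rightarrow$(i), with the restriction argument for (ii)$\Rightarrow$(iii), the sphere-average computation via the flux identity (\ref{eq:curvature and spheres}) for (i)$\Rightarrow$(ii), and testing the commutation on the indicators $1_{S_r}$ for (iii)$\Rightarrow$(i). Your observation that $A1_{S_r}=1_{S_r}$ lets you phrase the last step as ``$L_0 1_{S_r}$ must be spherically symmetric'' is just a slightly tidier wording of the paper's comparison of the two displayed expressions.
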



From Corollary \ref{main-appendix} in Appendix \ref{s:Appendix} with $\mathcal{H} = \ell^2 (V,m)$ and $D_0 = C_c (V)$ we can now  infer the following statement.  Recall that $A$ is the restriction of $\A$ to $\ell^2(V,m)$ which is an orthogonal projection onto the subspace of spherically symmetric functions.

\begin{lemma}\label{l:characterization2} Let  $(V,b,c,m)$ be a  weighted graph. Then, the following assertions are equivalent:
\begin{itemize}
\item [(i)] $ A L_0 f = L_0 A f$ for all $f\in C_c (V)$.
\item [(ii)] $A$ maps $D(L)$ into $D(L)$ and $A L f = L A f$ for all $f\in D(L)$.
    \item [(iii)]   $e^{-tL}$ commutes with $A$  for all $t\geq0$.
\end{itemize}
\end{lemma}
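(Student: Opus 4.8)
\textbf{Proof plan for Lemma~\ref{l:characterization2}.}

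The plan is to reduce everything to the general functional-analytic statement about commuting operators invoked just before the lemma, namely Corollary~\ref{main-appendix} in Appendix~\ref{s:Appendix}, applied with $\mathcal H=\ell^2(V,m)$ and $D_0=C_c(V)$. Here $A$ is a bounded self-adjoint operator on $\mathcal H$ (in fact an orthogonal projection), $L_0$ is defined on the dense subspace $D_0=C_c(V)$, and $L$ is the Friedrichs extension of $L_0$. The first thing I would check is that the hypotheses of the appendix corollary are genuinely met: $C_c(V)$ is a form core for $Q$ (true by definition of $D(Q)$), $L$ is non-negative self-adjoint, $A$ is a bounded self-adjoint operator leaving $C_c(V)$ invariant (since $A1_{S_r}=1_{S_r}$ and the $1_{S_r}$ span a natural subspace, or more directly since $A$ maps a finitely supported function to a finitely supported spherically symmetric one), and $\|A\|=1$. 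Once the dictionary is set up, the equivalence (i)$\Leftrightarrow$(ii)$\Leftrightarrow$(iii) is exactly the content of that corollary, so the ``real'' work is in the appendix and this lemma is a transcription. I would state this reduction explicitly and then, for the reader's convenience, sketch directly why each implication holds in case one prefers to avoid a detour through the appendix.

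For a self-contained sketch I would argue as follows. For (iii)$\Rightarrow$(i): if $e^{-tL}A=Ae^{-tL}$ for all $t\ge 0$, then differentiating at $t=0$ on the domain of $L$ (and using that $A$ is bounded, so it commutes with the strong derivative) gives $LAf=ALf$ for $f\in D(L)$; restricting to $f\in C_c(V)\subseteq D(L)$ and recalling $Lf=L_0f=\Lf f$ there, together with $A$ preserving $C_c(V)$, yields (i). For (i)$\Rightarrow$(ii): from $AL_0f=L_0Af$ on $C_c(V)$ one gets, for $f,g\in C_c(V)$, that $Q(Af,g)=\langle AL_0f,g\rangle=\langle L_0Af,g\rangle=\langle Af,L_0g\rangle=\langle f,AL_0g\rangle=\langle f,L_0Ag\rangle=Q(f,Ag)$, and since $A$ is bounded and $C_c(V)$ is a form core this extends to $Q(Af,g)=Q(f,Ag)$ for all $f,g\in D(Q)$, with $A$ mapping $D(Q)$ into $D(Q)$ and being bounded for $\langle\cdot,\cdot\rangle_Q$; a standard argument (if $f\in D(L)$ then $g\mapsto Q(Af,g)=Q(f,Ag)=\langle Lf,Ag\rangle=\langle ALf,g\rangle$ is $\langle\cdot,\cdot\rangle$-bounded on $D(Q)$, so $Af\in D(L)$ and $LAf=ALf$) then gives (ii). For (ii)$\Rightarrow$(iii): if $A$ commutes with $L$ on $D(L)$ then $A$ commutes with the resolvent $(L+1)^{-1}$, hence with every bounded Borel function of $L$ by the spectral theorem, in particular with $e^{-tL}$ for all $t\ge 0$.

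The main obstacle, and the only place where care is genuinely needed, is the step (i)$\Rightarrow$(ii): passing from commutation on the algebraic core $C_c(V)$ to commutation with the self-adjoint operator $L$, i.e., controlling the interaction between $A$ and the \emph{Friedrichs} extension rather than some other self-adjoint extension of $L_0$. The clean way to handle this is precisely via the quadratic form $Q$ and its core $C_c(V)$ as above, using that $A$ is $\langle\cdot,\cdot\rangle_Q$-bounded so that the identity $Q(Af,g)=Q(f,Ag)$ propagates from $C_c(V)\times C_c(V)$ to $D(Q)\times D(Q)$ by continuity; this is exactly the mechanism packaged in the appendix. I would therefore present the lemma's proof as: verify the hypotheses of Corollary~\ref{main-appendix} for $(\mathcal H,D_0,L_0,A)=(\ell^2(V,m),C_c(V),L_0,A)$, and cite it, optionally appending the short direct sketch above for the three implications.
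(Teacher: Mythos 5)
Your proposal is correct and follows essentially the same route as the paper: the paper's proof of this lemma is precisely the application of Corollary~\ref{main-appendix} with $\mathcal H=\ell^2(V,m)$ and $D_0=C_c(V)$, after noting that $A$ is an orthogonal projection leaving $C_c(V)$ invariant. Your supplementary direct sketch is fine too, with the one caveat (which you yourself flag) that in (i)$\Rightarrow$(ii) the $Q$-boundedness of $A$ on the form core is not automatic from $\ell^2$-boundedness but comes from $A$ being a projection, via the identity $Q_0(f,f)=Q_0(Af,Af)+Q_0((1-A)f,(1-A)f)$ — exactly the mechanism in the appendix.
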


\begin{remark} A  proof of the implication (i)$\Longrightarrow$(ii) in Lemma \ref{l:characterization2} could also be based on Proposition \ref{useful} from the appendix and the approximation of the heat kernel $p$ by the reduced kernels $p^{i}$ on  $\mathcal{H}_i := \ell^2 (B_i, m_i)$ discussed below.
\end{remark}

\begin{proof}[Proof of Theorem~\ref{t:HK_wsp}]
To prove Theorem~\ref{t:HK_wsp} simply combine Lemmas~\ref{l:characterization} and \ref{l:characterization2}.
\end{proof}

\smallskip

The following construction of the heat kernel, first presented in the continuous setting in \cite{Do83} and carried over to our set-up in \cite{KelLen09}, will be crucial.  Let $x_0\in V$ and $B_i=B_i(x_0)$ denote the corresponding distance balls. By the connectedness of the graph, $V=\bigcup_{i=0}^{\infty}B_i$ and, as $B_i\subseteq B_{i+1}$, the balls are an increasing exhaustion sequence of the graph $(V,b,c,m)$.  For $i\in\N_0$, let $m_i$ be the restriction of $m$ to $B_i$ and consider the restriction  $L_i^{(D)}$ of the Laplacian $L$ to the finite dimensional space $\ell^2(B_i,m_{i})$ with Dirichlet boundary conditions. This operator can be defined by restricting the form $Q$ to $C_c({B_{i}})$ and taking the closure in $\ell^{2}({B_i},m_{i})$ with respect to $\as{\cdot,\cdot}_{Q}$. It turns out, (see \cite{KelLen09}), that
\[ L_i^{(D)} f (x) =\Lf f (x) \quad \mbox{for $f\in \ell^{2}({B_{i}},m_{i})$ and $x\in B_i$.}\]
This just means that $L_i^{(D)} = \pi_i \Lf \iota_i$
for the canonical injection $\iota_i: \ell^2 (B_i, m_i)\longrightarrow \ell^2 (V,m)$ acting as extension by zero and $\pi_i$, the adjoint of $\iota_i$.

We let $p^{i}$ denote the heat kernel of $L_i^{(D)}$ which is extended to $V\times V$ by zero.  That is,
\[ p_{t}^{i}(x,y)= \left\{
\begin{array}{ll}
e^{-tL_{i}^{(D)}}\de_y(x) & \mbox{if } x,y\in B_{i}, t\geq0 \\
0 & \mbox{otherwise.}
\end{array} \right. \]
Here, $\de_{y}(x)$ is the $\ell^1$-normalized delta function as before. We call $p^{i}$ the \emph{restricted heat kernels} and note that each $p^i$ satisfies
$$\big( \Lf + \tpdt \big) p_t^i(x,y)= 0\qquad\mbox{for all $x,y\in {B_{i}}, t \geq 0$}.$$
Furthermore, by \cite{KelLen09}*{Proposition~2.6},
$$\lim_{i \to \infty} p_t^i(x,y) = p_t(x,y)\qquad\mbox{for all $x,y\in V$}.$$
Therefore, to prove a property of the heat kernel it often suffices to prove the corresponding property for the reduced heat kernels and then pass to the limit.  This is used repeatedly below.

\medskip

In order to prove Theorem~\ref{t:HK_comparison}, we need a version of the minimum principle for the heat equation in our setting.  For $U \subseteq V$ we let $U^c = V \setminus U$.

\begin{lemma} \label{l:maximum}\emph{(Minimum principle for the heat equation)} Let $(V,b,c,m)$ be a weighted graph, $U \subset V$  a connected proper subset and $u: V \times [0, T] \to \R$ such that $u(x,\cdot)$ is continuously differentiable for every $x\in U$.
Suppose that
\begin{itemize}
  \item [(a)] $(\Lf + \tpdt ) u \geq0$ on $U \times [0,T]$,
  \item [(b)] the negative part $u_{-}:=\min\{u, 0\}$ of $u$ attains its minimum on $U\times[0,T]$,
  \item  [(c)] $u\geq0$ on $(U^{c}\times[0,T]) \cup (U \times \{0\})$.
\end{itemize}
Then, $u\geq0$ on $U \times [0,T]$.
\end{lemma}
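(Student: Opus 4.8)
The plan is to argue by contradiction, assuming that $u$ becomes negative somewhere on $U \times [0,T]$ and extracting a contradiction at a first point where this happens. First I would use hypothesis (b): since $u_- = \min\{u,0\}$ attains its minimum on $U \times [0,T]$, there is a point $(x_*, t_*) \in U \times [0,T]$ with $u(x_*, t_*) = \min_{U \times [0,T]} u_- < 0$ (under the contradiction hypothesis). By (c), $u \geq 0$ on $U \times \{0\}$, so $t_* > 0$. Among all such minimizing points I would, if necessary, pick one with $t_*$ as small as possible (this is legitimate because the minimum value is attained and, at fixed minimal value, one can take an infimum of the $t$-coordinates; a short compactness/closedness argument justifies that the infimum is attained, using continuity of $u(x,\cdot)$ for each of the finitely many relevant vertices — actually only the single vertex $x_*$ matters here, so continuity in $t$ suffices).

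Next I would evaluate $(\Lf + \tfrac{d}{dt})u$ at $(x_*, t_*)$ and show it is strictly negative, contradicting (a). For the spatial part, $\Lf u(x_*, t_*) = \frac{1}{m(x_*)}\sum_{y} b(x_*,y)\big(u(x_*,t_*) - u(y,t_*)\big) + \frac{c(x_*)}{m(x_*)} u(x_*, t_*)$. Since $u(x_*, t_*)$ is the global minimum of $u$ over $U \times [0,T]$ and, by (c), $u \geq 0 \geq u(x_*,t_*)$ on $U^c \times [0,T]$, every neighbor $y$ of $x_*$ (whether in $U$ or in $U^c$) satisfies $u(y, t_*) \geq u(x_*, t_*)$, so each term $b(x_*,y)(u(x_*,t_*)-u(y,t_*)) \leq 0$. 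Also $c(x_*) \geq 0$ and $u(x_*,t_*) < 0$, so the potential term is $\leq 0$. Hence $\Lf u(x_*, t_*) \leq 0$. For the time derivative: if $t_* < T$, then $t_*$ is an interior minimum in $t$ of the function $t \mapsto u(x_*, t)$ restricted to where it is negative, so $\tfrac{d}{dt} u(x_*, t)|_{t=t_*} = 0$; if $t_* = T$, then $t \mapsto u(x_*,t)$ is minimized at the right endpoint, giving $\tfrac{d}{dt}u(x_*,t)|_{t=t_*} \leq 0$. Either way $(\Lf + \tfrac{d}{dt})u(x_*, t_*) \leq 0$.

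The subtlety — and the main obstacle — is that $\leq 0$ is not quite the strict contradiction with (a), which only gives $\geq 0$; so I need to rule out the borderline case where everything vanishes. The clean way is to exploit connectedness of $U$ and the strictness $u(x_*,t_*)<0$ more carefully: $\Lf u(x_*,t_*) = 0$ forces, term by term, that $u(y, t_*) = u(x_*, t_*)$ for every neighbor $y$ of $x_*$ and that $c(x_*) = 0$. In particular any neighbor $y \in U$ is again a minimizing point with the same (negative) value at the same time $t_*$. Propagating along paths in the connected set $U$, all vertices of $U$ take the value $u(x_*,t_*)<0$ at time $t_*$, and moreover every vertex of $U$ has all its neighbors (including those in $U^c$) at that same negative value at time $t_*$ — but $u \geq 0$ on $U^c \times [0,T]$, and since $U$ is a \emph{proper} connected subset of the connected graph $(V,b,c,m)$, there is at least one edge from $U$ to $U^c$, giving a neighbor $y \in U^c$ with $u(y,t_*) \geq 0 > u(x_*,t_*)$, a contradiction. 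This handles the spatial degeneracy without ever needing strict negativity of the time derivative, so the endpoint case $t_* = T$ causes no trouble. I would present the argument in this order: (1) contradiction hypothesis and choice of $(x_*,t_*)$; (2) the sign analysis of each term of $\Lf u$ and of $\tfrac{d}{dt}u$ at that point; (3) the propagation-along-$U$ step using connectedness and properness to close the gap. Minor care is needed that the sum defining $\Lf u(x_*,t_*)$ is finite (local finiteness) so the term-by-term reasoning is valid, and that "attains its minimum" in (b) is what lets us speak of an actual minimizing point rather than just an infimum.
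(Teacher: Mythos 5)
Your proof is correct and follows essentially the same route as the paper: take the minimizing point of $u_-$ with negative value, use the time-derivative sign together with minimality in space to get $(\Lf+\tfrac{d}{dt})u\leq 0$ there, conclude from (a) that all neighboring values coincide, and propagate through $U$ until an edge into $U^c$ contradicts (c). The only differences are cosmetic (choosing $t_*$ minimal and noting $c(x_*)=0$ are unnecessary extras), so there is nothing to fix.
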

\begin{proof}
Let $(x,t)$ be the minimum of $u_-$ on $U \times [0,T]$.  If $u(x,t) \geq 0$, then we are done.  Furthermore, by (c), we may assume that $t>0$. Therefore, suppose that $u(x,t)<0$ where $t>0$. As $(x, t)$ is a minimum for $u$, we have that $(\tpdt u)(x,t)=0$ if $t\in(0,T)$ and $(\tpdt u)(x,t)\leq 0$ if $t=T$.
Since $(x,t)$ is also a minimum with respect to $x$ it follows  that
\begin{align*}
\big( \Lf + \tpdt \big) u(x,t) &  \leq \Lf u (x,t) =  \frac{1}{m(x)} \sum_{y\in V} b(x, y) \big( u(x,t) - u(y,t) \big)+\frac{c(x)}{m(x)}u(x,t)  \leq 0.
\end{align*}
Therefore, by (a), $\Lf u(x,t) = 0$ so that $u(x,t) = u(y,t)<0$ for all $y \sim x$.  Repeating the argument we eventually reach some $y \not \in U$ contradicting (c).
\end{proof}

We also need the following extension of Lemma~3.10 from \cite{Woj09} which states that the heat kernel on a weakly spherically symmetric graph decays with respect to $r$.  The proof can be carried over directly to our situation so we omit it here.

\begin{lemma}\label{l:decay}\emph{(Heat kernel decay, \cite[Lemma~3.10]{Woj09})}
Let $( V, b, m)$ be a weighted graph, $p$ be the heat kernel and $p^{i}$ be the restricted kernels of $B_{i}$. Assume that $p_t^{i}(x_0,\cdot)$ is a spherically symmetric function for all $t\geq0$ with respect to some vertex $x_0$. Then, given $0\leq r \leq i$, we have for all $t > 0$
\[  p_t^{i}(r) > p_t^{i}(r+1) \]
and, in general, for all  $r \in \N_0$ and $t \geq 0$
\begin{equation*} \label{decrease}
p_t(r) \geq  p_t(r+1).
\end{equation*}
\end{lemma}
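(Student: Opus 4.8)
The plan is to prove the strict inequality $p_t^i(r) > p_t^i(r+1)$ for the restricted kernels on $B_i$ by a minimum-principle argument applied to the difference $u(x,t) := p_t^i(x_0,x)$ viewed as a function of two ``adjacent'' spheres, and then pass to the limit $i\to\infty$ using $\lim_i p_t^i(x,y) = p_t(x,y)$ to get the non-strict inequality for $p$ itself. First I would record the setup: since $(V,b,m)$ is such that $p_t^i(x_0,\cdot)$ is spherically symmetric, write $p_t^i(r)$ for the common value on $S_r$, and note $p_t^i$ satisfies $(\Lf + \tpdt)p_t^i(x_0,x) = 0$ on $B_i$ with initial data $\de_{x_0}$ and Dirichlet conditions outside $B_i$. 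The key computation is to apply the spherically-symmetric form of $\Lf$, namely
\[
\Lf f(r) = \ka_+(r)\big(f(r)-f(r+1)\big) + \ka_-(r)\big(f(r)-f(r-1)\big)
\]
(here $c\equiv 0$), to the function $r\mapsto p_t^i(r)$; combined with $\tpdt p_t^i(r) = -\Lf p_t^i(r)$ this gives a recursion relating the ``discrete derivative'' $p_t^i(r)-p_t^i(r+1)$ across consecutive spheres.

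The heart of the argument is to show the differences $w_r(t) := p_t^i(r) - p_t^i(r+1)$ stay positive for $t>0$ and $0\le r < i$. I would set this up as follows: consider the function on $B_i$ obtained by, roughly, comparing the heat kernel to its own shift outward by one sphere, or equivalently run the minimum principle (Lemma~\ref{l:maximum}) on the region $U = B_i \setminus \{x_0\}$ for the function $v(x,t)$ defined to be spherically symmetric with $v(r,t) = w_r(t) = p_t^i(r) - p_t^i(r+1)$. One checks: (a) at $t=0$ we have $w_r(0) = \de_{x_0}(r) - \de_{x_0}(r+1) \ge 0$ for $r\ge 1$ (it equals $0$), and $w_0(0) = 1/m(x_0) > 0$; (b) $v$ satisfies a differential inequality of the required sign, which is where the relation \eqref{eq:curvature and spheres} between $\ka_\pm$ and the sphere measures is used to show the averaged/shifted quantity still solves (or supersolves) the heat equation; (c) the boundary values outside $U$ are controlled since $p_t^i$ vanishes outside $B_i$ and $w_0(t)>0$ needs separate treatment at the ``inner boundary'' $x_0$. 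Once $v\ge 0$ follows from the minimum principle, a strong-minimum-principle / connectedness step (exactly the last paragraph of the proof of Lemma~\ref{l:maximum}: if the minimum $0$ is attained at an interior point then $v$ is forced to be $0$ along a path out of $U$, contradicting $w_0(t)>0$) upgrades this to the strict inequality $p_t^i(r) > p_t^i(r+1)$ for $t>0$.

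Finally, passing to the limit: for fixed $r\in\N_0$ and $t\ge 0$, choose $i > r$; then $p_t^i(r) \ge p_t^i(r+1)$ (the non-strict version, trivially true also at $t=0$), and letting $i\to\infty$ with $\lim_i p_t^i(x,y) = p_t(x,y)$ gives $p_t(r) \ge p_t(r+1)$. Note the strictness is genuinely lost in the limit, so only the weak inequality survives for $p$, which is exactly what the statement claims.

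I expect the main obstacle to be step (b) — verifying that the shifted/differenced quantity $v(r,t) = p_t^i(r) - p_t^i(r+1)$ satisfies a differential inequality $(\Lf + \tpdt)v \ge 0$ (or $\le 0$, with the right sign conventions) on $B_i\setminus\{x_0\}$. This is not a one-line computation: it requires carefully expanding $\Lf$ in the spherically symmetric coordinates, using $\ka_+(r)m(S_r) = \ka_-(r+1)m(S_{r+1})$ to handle the mismatch between the curvature coefficients at radius $r$ and $r+1$, and correctly accounting for the behavior at $r=0$ (the root) and at $r=i$ (the Dirichlet boundary), since the minimum principle as stated needs a connected proper subset $U$ and control on $U^c$. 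Getting the boundary bookkeeping at $x_0$ right — ensuring $w_0(t) = p_t^i(0) - p_t^i(1)$ stays strictly positive so that the strong minimum principle can be invoked — is the delicate point; this is presumably why the authors refer to \cite[Lemma~3.10]{Woj09} and say the proof carries over rather than reproducing it.
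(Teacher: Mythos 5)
The paper does not actually reproduce a proof of this lemma: it states that the proof of \cite[Lemma~3.10]{Woj09} carries over verbatim, and that proof argues directly on the radial system of ODEs rather than via the minimum principle on the graph. Your proposal has a genuine gap exactly at the step you yourself single out as the main obstacle, namely (b). Set $w_r(t)=p^i_t(r)-p^i_t(r+1)$ for $0\le r\le i$, with the convention $p^i_t(i+1)\equiv 0$. Subtracting the heat equations for consecutive spheres gives, for $0\le r<i$,
\begin{align*}
\tpdt w_r=\ka_-(r)\,w_{r-1}-\big(\ka_+(r)+\ka_-(r+1)\big)w_r+\ka_+(r+1)\,w_{r+1}
\end{align*}
(with $\ka_-(0)=0$ and the obvious modification at $r=i$), whereas the spherically symmetric extension $v(x,t)=w_r(t)$ for $x\in S_r$ satisfies
\begin{align*}
\Lf v(r)=\ka_+(r)\big(w_r-w_{r+1}\big)+\ka_-(r)\big(w_r-w_{r-1}\big),
\end{align*}
so that
\begin{align*}
\big(\Lf+\tpdt\big)v(r)=\big(\ka_+(r+1)-\ka_+(r)\big)w_{r+1}+\big(\ka_-(r)-\ka_-(r+1)\big)w_r.
\end{align*}
This expression has no sign in general: nothing is assumed about monotonicity of $\ka_\pm$ in $r$, the identity (\ref{eq:curvature and spheres}) does not make these terms vanish, and the sign of $w$ is precisely what is to be proved. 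Hence Lemma~\ref{l:maximum} cannot be applied to $v$ on $B_i\setminus\{x_0\}$ as you propose, and the strict positivity of $w_0(t)$, which your scheme additionally needs as an inner boundary condition at $x_0$, is nowhere established.

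The repair is to abandon the graph minimum principle and argue on the displayed tridiagonal system itself, which is in essence what the cited proof does. Its coefficient matrix has nonnegative off-diagonal entries $\ka_-(r)$ and $\ka_+(r+1)$, which are strictly positive for the relevant indices because all spheres $S_1,\dots,S_{i+1}$ are nonempty; such a finite cooperative, irreducible linear system maps the nonnegative orthant into itself and is positivity improving, so from $w(0)=(1/m(x_0),0,\dots,0)\neq 0$ one obtains $w_r(t)>0$ for all $t>0$ and $0\le r\le i$. Equivalently, run a first-time-of-sign-change argument: at the first $t_0>0$ at which some $w_r$ vanishes one has $\tpdt w_r(t_0)=\ka_-(r)w_{r-1}(t_0)+\ka_+(r+1)w_{r+1}(t_0)\ge 0$, and strict positivity of a neighbouring difference forces $\tpdt w_r(t_0)>0$, contradicting $w_r>0$ on $(0,t_0)$; the degenerate case in which all $w_r(t_0)$ vanish would give $p^i_{t_0}\equiv 0$ on $B_i$, which is impossible since $e^{-t_0L_i^{(D)}}$ is injective. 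Your initial-data check (a) and your final passage to the limit $i\to\infty$, which correctly retains only the non-strict inequality for $p$, are fine as stated.
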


\smallskip

With these preparations, we can now prove the second main theorem as follows:

\begin{proof}[Proof of Theorem~\ref{t:HK_comparison}]
Let $p^{\mathrm{sym,i}}$ be the restricted heat kernels  of the weakly spherically symmetric graph $(\s V,\s b,\s m)$ given in the statement of the theorem. On the general weighted graph $(V,b,m)$, we define the functions $\rho^{i}_t:V\to\R$, $t\geq0$, by
\begin{align*}
    \rho^{i}_{t}(x):= p_{t}^{\mathrm{sym,i}}(r)\quad\mbox{ for $x\in S_r(x_0)$, $0\leq r\leq i$}
\end{align*}
and $\rho_t^{i}(x):=0$ otherwise.

Under the assumptions of stronger curvature growth and using the heat kernel decay, Lemma~\ref{l:decay}, it follows from the action of $\Lf$ on spherically symmetric functions given after Definition \ref{sphericalgraphs}  that, for all $0\leq r\leq i$ and $x \in S_r(x_0)\subset V$,
\begin{align*}
\Lf \rho_t^{i}(x)=& \ka_{+}(x) ( p_{t}^{\mathrm{sym,i}}(r) - p_{t}^{\mathrm{sym,i}}(r+1) ) + \ka_{-}(x) (p_{t}^{\mathrm{sym,i}}(r)-p_{t}^{\mathrm{sym,i}}(r-1))\\
\geq& {\s \ka_+(r)} (p_{t}^{\mathrm{sym,i}}(r) - p_{t}^{\mathrm{sym,i}}(r+1)) +{\s \ka_-(r)}(p_{t}^{\mathrm{sym,i}}(r) - p_{t}^{\mathrm{sym,i}}(r-1))\\
=&\s\Lf p_{t}^{\mathrm{sym,i}}(r).
\end{align*}
Hence,
$(\Lf+\tpdt)\rho_t^{i}(x)\geq(\s\Lf+\tfrac{d}{dt})p_{t}^{\mathrm{sym,i}}(r)=0$.

Let $p^{i}$ be the restricted  heat kernels of the general weighted graph $(V,b,m)$ so that  $(\Lf+\tpdt)p_t^{i}(x_0,\cdot)=0$ on $B_i\subset V$ and let $u(\cdot,t) = \rho_t^i(\cdot) - p_t^i(x_0,\cdot)$.  It follows that
\[ \big( \Lf + \tpdt \big) u(x,t) \geq 0  \]
on $B_i \times [0,T]$.   By compactness, the negative part of $u(x,t)$ attains its minimum on $B_i\times[0,T]$.  Furthermore, as $\rho_{t}^{i}=0$ on $B_i^c$ by definition, we have $u(x,t) = 0$ on $B_i^c$ and $\rho_0^i(\cdot) = p_0^i(x_0,\cdot)$ which is $\frac{1}{m(x_0)} = \frac{1}{\s m(o)}$ for $x_0$ and 0 otherwise.  Hence, by the minimum principle,  Lemma \ref{l:maximum}, we get that
\[ p_{t}^{\mathrm{sym,i}}(r)=  \rho_t^i(x) \geq p_t^i(x_0,x) \quad\mbox{for } x\in B_i. \]
The desired  result now  follows by letting $i \to \infty$.  The inequality in the case of weaker curvature growth is proven analogously.
\end{proof}

\section{Spectral estimates}\label{s:spectrum}
In this section we give an estimate for the bottom of the spectrum and a criterion for discreteness of the spectrum in the weakly spherically symmetric case. We then use the heat kernel comparisons obtained above to give estimates on the bottom of the spectrum for general weighted graphs.

\subsection{Notations and definitions}
Let a weighted graph $(V,b,m)$ be given. Let $\si(L)$ denote the spectrum of $L$ and
\[ \lm_0:=  \lm_0(L):=\inf\si(L). \]
We call $\lm_0$ the \emph{bottom of the spectrum} or the \emph{ground state energy}.  The ground state energy can be obtained  by the Rayleigh-Ritz quotient (see, for instance, \cite{RS78}) as follows:
\begin{equation*}\label{bottom}
\lm_0= \inf_{f \in D(Q)} \frac{Q(f,f)}{\as{f,f}} = \inf_{ f \in C_c(V) } \frac{\as{Lf,f}}{\as{f,f}}
\end{equation*}
where the last equality follows since $C_c(V)$ is dense in $D(Q)$ with respect to $\as{\cdot,\cdot}_{Q}={\as{\cdot,\cdot} + Q(\cdot,\cdot)}$ as discussed  in Section \ref{s:Setup}.  Furthermore, the spectrum of an operator may be decomposed as a disjoint union as follows:
\[ \si(L) = \sd(L) \ \dot{\cup} \ \se(L) \]
where $\sd(L)$ denotes the \emph{discrete spectrum} of $L$, defined as the set of isolated eigenvalues of finite multiplicity, and $\se(L)$ denotes the \emph{essential spectrum}, given by  $\se(L)=\si(L)\setminus\sd(L)$.  We will use the notation $\lmess(L)$ to denote the bottom of the essential spectrum of $L$.

Fix a vertex $x_0$ and let $S_r = S_r(x_0)$ and $B_r=B_r(x_0)$.
\begin{definition}
 For $f:V\to\R$ and $r\in\N_0$, we define the \emph{weighted volume of a ball} by
\begin{align*}
V_f(r)=\sum_{x\in B_r}f(x)m(x).
\end{align*}
In particular, $V_1(r)=m(B_r)$. Moreover, for $r\in\N_0$, we let
\begin{align*}
\partial B(r) = \sum_{x \in S_r} \ka_+(x)m(x)
\end{align*}
be the \emph{measure of the boundary of a ball} which is the weight of the edges leaving the ball.  Note that, in the weakly spherically symmetric case, $\partial B(r) = \s \ka_+(r) \s m(S_r).$
\end{definition}

\subsection{Theorems and remarks}
The first main result of this section is an estimate on the bottom of the spectrum and a criterion for discreteness of the spectrum in terms of volume and boundary growth for weakly spherically symmetric graphs.

\begin{theorem}\label{t:spectrum}\emph{(Volume and spectrum)} Let $(\s V,\s b,\s m)$ be a weakly spherically symmetric graph. If
\[ \sum_{r=0}^\infty \frac{V_1(r)}{\partial B(r)} = a<\infty, \]
then
\[ \lm_0(\s L) \geq \frac{1}{a}  \ \tu{ and } \ \si(\s L) = \sd(\s L). \]
\end{theorem}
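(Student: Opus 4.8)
The plan is to reduce everything to a Hardy-type inequality on the half-line $\N_0$, exploiting that on a weakly spherically symmetric graph one may test the Rayleigh quotient against spherically symmetric functions. First I would record the explicit quadratic form on spherically symmetric $f$: using $\s L f(r)=\s\ka_+(r)(f(r)-f(r+1))+\s\ka_-(r)(f(r)-f(r-1))$ together with the compatibility relation (\ref{eq:curvature and spheres}), namely $\s\ka_+(r)\s m(S_r)=\s\ka_-(r+1)\s m(S_{r+1})=\partial B(r)$, a summation by parts gives, for finitely supported spherically symmetric $f$,
\[
Q(f,f)=\as{\s L f,f}=\sum_{r=0}^{\infty}\partial B(r)\,\big(f(r)-f(r+1)\big)^2 ,
\]
while $\as{f,f}=\sum_{r=0}^{\infty}f(r)^2\,\s m(S_r)$, with $\s m(S_r)=V_1(r)-V_1(r-1)$. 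Since the form and the operator are invariant under the averaging projection $A$ (Lemma~\ref{l:characterization}, and the fact that $A$ commutes with $L$), the infimum of the Rayleigh quotient is attained on the spherically symmetric sector, so $\lm_0(\s L)$ equals the infimum of the right-hand ratio over finitely supported $g:\N_0\to\R$.

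Next I would prove the one-dimensional weighted Hardy inequality
\[
\sum_{r=0}^{\infty} g(r)^2\,\s m(S_r)\;\le\; a\,\sum_{r=0}^{\infty}\partial B(r)\,\big(g(r)-g(r+1)\big)^2
\]
for all finitely supported $g$, where $a=\sum_{r}V_1(r)/\partial B(r)$. The standard trick: write $g(r)=\sum_{k\ge r}\big(g(k)-g(k+1)\big)$, split the summand as $\big(g(k)-g(k+1)\big)\sqrt{\partial B(k)}\cdot \partial B(k)^{-1/2}$, apply Cauchy--Schwarz with a weight chosen so that the resulting ``dual'' weight telescopes into $V_1$; concretely one compares against the sequence of partial sums of $V_1(k)/\partial B(k)$. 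Summing over $r$ against $\s m(S_r)=V_1(r)-V_1(r-1)$ and interchanging the order of summation, the convergence of $a$ is exactly what makes the double sum bound out by $a$ times the Dirichlet sum. This yields $\lm_0(\s L)\ge 1/a$.

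For discreteness of the spectrum I would use the decomposition of $\ell^2(\s V,\s m)$ into the spherically symmetric subspace (the range of $A$) and its orthogonal complement; since $L$ commutes with $A$, it is block-diagonal. On the symmetric block, $\s L$ is unitarily equivalent to a Jacobi-type operator on $\ell^2(\N_0,\s m(S_r))$ whose form is the Dirichlet sum above; the finiteness of $a$ forces the ``tail energy'' $\sum_{r\ge N}V_1(r)/\partial B(r)\to 0$, and a standard argument (cut off a test function at radius $N$ and estimate the symmetric difference of the form domains, or use the min-max characterization of $\lmess$) shows the tails contribute arbitrarily large Rayleigh quotients, hence $\lmess=\infty$ on this block, i.e.\ purely discrete spectrum there. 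On the orthogonal complement one notes that $\s L$ restricted to functions supported on a fixed sphere $S_r$ with zero average already has the form $Q(f,f)\ge$ (something growing with $r$): more carefully, a function orthogonal to the symmetric sector and supported in $B_N$ lives in a finite-dimensional space, and for such functions supported outside $B_N$ the lower curvature bound combined with the finiteness of $a$ again forces large energy; so the essential spectrum of the whole operator is empty. The main obstacle I anticipate is the bookkeeping in the Hardy inequality — choosing the Cauchy--Schwarz weights so that the dual sum telescopes exactly to $a$ — and, secondarily, making the cut-off argument for $\lmess$ on the non-symmetric sector fully rigorous rather than hand-wavy; one clean way around the latter is to invoke a Persson-type formula, $\lmess(\s L)=\sup_{K\text{ finite}}\inf\{Q(f,f)/\as{f,f}: f\in C_c(V\setminus K)\}$, and apply the (tail form of the) Hardy inequality with $a$ replaced by $a_N=\sum_{r\ge N}V_1(r)/\partial B(r)\to 0$.
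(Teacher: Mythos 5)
There is a genuine gap, and it sits exactly at the step you pass over most quickly: the claim that, because $\s L$ commutes with the averaging projection $A$, ``the infimum of the Rayleigh quotient is attained on the spherically symmetric sector.'' Commutation only gives a block decomposition $\ell^2(\s V,\s m)=\ran A\oplus(\ran A)^{\perp}$ with $\lm_0(\s L)=\min\{\lm_0(\s L|_{\ran A}),\,\lm_0(\s L|_{(\ran A)^{\perp}})\}$; your Hardy inequality controls only the first block and says nothing about the second. (Note also that the inequality $Q(Af,Af)\le Q(f,f)$, which does follow from the commutation, goes the wrong way here, since $\|Af\|\le\|f\|$ as well.) The claim is in fact true, but proving it already requires a nontrivial input such as Proposition~\ref{t:Li} (the spectral measure of $\de_{x_0}$, which is spherically symmetric, reaches down to $\lm_0(\s L)$) or the Allegretto--Piepenbrink characterization, Proposition~\ref{positive}. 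The situation is worse for the essential spectrum: in the Persson-type formula the relevant test functions are supported in $V\setminus B_N$ and are \emph{not} symmetric, there is no analogue of the $\de_{x_0}$ argument on $B_N^c$ (which has a boundary sphere rather than a root), and your treatment of the non-symmetric block (``forces large energy'') is an assertion, not an argument. So as written, neither the bound $\lm_0\ge 1/a$ nor $\se(\s L)=\emptyset$ is established for the full operator.

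The paper's proof avoids the sector reduction altogether, and that is the fix you need. By the recursion formula (Lemma~\ref{radialsolutions}) one constructs a strictly positive spherically symmetric solution of $(\widetilde{L}^{\tu{sym}}-\tfrac1a)v=0$ (Lemma~\ref{l:existence_positive_solutions}); Proposition~\ref{positive} then converts this single positive (super)solution into the lower bound $\lm_0(\s L)\ge\frac1a$ for the \emph{whole} operator, with no restriction to symmetric test functions. For discreteness, one uses $\lmess(\s L)=\lim_{i\to\infty}\lm_0(L_i^{(D)})$ for the Dirichlet restrictions to $B_i^c$, and the second part of Lemma~\ref{l:existence_positive_solutions} supplies positive functions $v_i$ on $B_i^c$ with $(\widetilde{L}^{(D)}_i-\tfrac1{a_i})v_i=\s\ka_-(i+1)1_{S_{i+1}}\ge 0$, so Proposition~\ref{positive} gives $\lm_0(L_i^{(D)})\ge\frac1{a_i}\to\infty$. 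Your radial Hardy inequality with constant $a$ is true and is essentially the quadratic-form shadow of this positive solution (a ground-state transform of $v$ yields it), so it could serve as the symmetric-block estimate; but without Proposition~\ref{positive} (or Proposition~\ref{t:Li}) you have no bridge from the symmetric block to $\ell^2(\s V,\s m)$, and for $\lmess$ the supersolution route on $B_i^c$ appears to be the only clean one.
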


The second main result of this section is a comparison of the bottom of the spectrum of a general weighted graph and a weakly spherically symmetric one.

\begin{theorem}\label{t:spectral_comparison}\emph{(Spectral comparison)}
If a weighted graph $(V, b,m)$ has stronger (respectively, weaker) curvature growth than a weakly spherically symmetric graph $(\s V,\s b,\s m)$, then
\[ \lm_0(L) \geq\lm_0(\s L)\quad(\mbox{respectively, } \lm_0(L) \leq\lm_0(\s L)). \]
If $(\s V,\s b, \s m)$  satisfies
$\sum_{r=0}^\infty \frac{ V_1(r)}{\partial  B(r)} = a<\infty $
and $(V, b,m)$ has stronger curvature growth, then
\[ \lm_0(L) \geq \frac{1}{a} \ \tu{ and } \ \si(L) = \sd(L). \]
\end{theorem}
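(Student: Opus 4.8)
The plan is to derive the spectral comparison from the heat kernel comparison of Theorem~\ref{t:HK_comparison} together with the graph analogue of Li's theorem proven in \cites{KLVW, HKLW}, which I may assume says that $\lm_0(L)=-\lim_{t\to\infty}\tfrac{1}{t}\log p_t(x,y)$ for every connected, locally finite graph and all vertices $x,y$. So I would first assume that $(V,b,m)$ has stronger curvature growth than $(\s V,\s b,\s m)$; then $m(x_0)=\s m(o)$, so Theorem~\ref{t:HK_comparison} applies, and evaluating it at $x=x_0\in S_0(x_0)$ gives $p_t(x_0,x_0)\le\s p_t(0)$ for all $t\ge0$. Applying $-\tfrac{1}{t}\log(\cdot)$, letting $t\to\infty$, and invoking the Li-type identity for $L$ at $x_0$ and for $\s L$ at its root then yields $\lm_0(L)\ge\lm_0(\s L)$; the case of weaker curvature growth is identical with the inequalities reversed. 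Combined with Theorem~\ref{t:spectrum} applied to $(\s V,\s b,\s m)$ --- which gives $\lm_0(\s L)\ge 1/a$ when $\sum_r V_1(r)/\partial B(r)=a<\infty$ --- this also gives the bound $\lm_0(L)\ge 1/a$ in the second statement.

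It remains to prove $\se(L)=\emptyset$, i.e.\ $\si(L)=\sd(L)$, under the hypotheses of the second part, and the idea is that stronger curvature growth transfers the summability condition to $(V,b,m)$ itself. Writing $V_1(r)=m(B_r)$ and $\partial B(r)=\sum_{x\in S_r}\ka_+(x)m(x)$ for $(V,b,m)$, I would count the edges between $S_r$ and $S_{r+1}$ in two ways to get $\partial B(r)=\sum_{y\in S_{r+1}}\ka_-(y)m(y)$, so that stronger curvature growth gives $\s\ka_+(r)m(S_r)\le\partial B(r)\le\s\ka_-(r+1)m(S_{r+1})$; dividing and using \eqref{eq:curvature and spheres} for $(\s V,\s b,\s m)$ yields $m(S_{r+1})/m(S_r)\ge\s m(S_{r+1})/\s m(S_r)$. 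Telescoping over $k\le r$ gives $m(B_r)/m(S_r)\le\s m(B_r)/\s m(S_r)$, and together with $\partial B(r)\ge\s\ka_+(r)m(S_r)$ this gives
\[
\frac{V_1(r)}{\partial B(r)}=\frac{m(B_r)}{\partial B(r)}\le\frac{1}{\s\ka_+(r)}\cdot\frac{m(B_r)}{m(S_r)}\le\frac{\s m(B_r)}{\s\ka_+(r)\,\s m(S_r)},
\]
whose right-hand side sums to the constant $a$ from the hypothesis; hence $\sum_r V_1(r)/\partial B(r)\le a<\infty$ for $(V,b,m)$. I would then finish by the same Persson-type argument that establishes discreteness in Theorem~\ref{t:spectrum}: for that conclusion the argument uses only finiteness of $\sum_r V_1(r)/\partial B(r)$ and no symmetry, and since its tails tend to $0$, the bottom of the spectrum of the Dirichlet restriction of $L$ to $V\setminus B_n$ is bounded below by $\big(\sum_{r\ge n}V_1(r)/\partial B(r)\big)^{-1}\to\infty$, so $\se(L)=\emptyset$.

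The main external obstacle is the Li-type identity $\lm_0=-\lim_{t\to\infty}\tfrac{1}{t}\log p_t(x,y)$ of \cites{KLVW, HKLW}: this is precisely what upgrades the pointwise heat-kernel inequality of Theorem~\ref{t:HK_comparison} to a spectral bound. Internally, the step needing the most care is the transfer in the second part: since a curvature comparison says nothing about $m$ versus $\s m$ separately, one must first extract the sphere-measure ratio bound $m(S_{r+1})/m(S_r)\ge\s m(S_{r+1})/\s m(S_r)$ from the hypotheses via the edge-counting identity and \eqref{eq:curvature and spheres}, after which the summability transfer, and hence discreteness, follow.
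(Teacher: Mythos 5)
Your treatment of the first statement and of the bound $\lm_0(L)\geq\frac{1}{a}$ coincides with the paper's (heat kernel comparison at the root combined with Proposition~\ref{t:Li}, then Theorem~\ref{t:spectrum}), and your transfer of the summability condition to $(V,b,m)$ is a correct computation: $\partial B(r)=\sum_{y\in S_{r+1}}\ka_-(y)m(y)$, the two-sided bound $\s\ka_+(r)m(S_r)\leq\partial B(r)\leq\s\ka_-(r+1)m(S_{r+1})$ and (\ref{eq:curvature and spheres}) do give $\sum_r V_1(r)/\partial B(r)\leq a$ on $(V,b,m)$. The gap is the final step. It is not true that finiteness of $\sum_r V_1(r)/\partial B(r)$ \emph{alone, without spherical symmetry,} yields $\lm_0(L_i^{(D)})\geq\big(\sum_{j> i}V_1(j)/\partial B(j)\big)^{-1}$, nor discreteness, nor even $\lm_0(L)>0$. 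In the proof of Theorem~\ref{t:spectrum} these lower bounds do not come from the summability condition directly but from Lemma~\ref{l:existence_positive_solutions}, whose positive supersolutions $v_i$ are built by the recursion formula of Lemma~\ref{radialsolutions}; that recursion is an identity only because $\ka_{\pm}$ are constant on spheres, so the ``Persson-type argument'' you invoke uses the symmetry essentially. Concretely: take the spherically symmetric tree with $m\equiv1$ and $\ka_+(r)=(r+1)^2$ and attach a single infinite ray at the root. Then $V_1(r)/\partial B(r)\leq C/(r+1)^2$ is summable, but for every $i\geq1$ the Dirichlet restriction to $B_i^c$ contains a disjoint half-line, so $\lm_0(L_i^{(D)})=0$ for all $i$, hence $\lmess(L)=0$ and $\se(L)\neq\emptyset$ (compare remark (b) after Corollary~\ref{c:stochastic}). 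This graph does not satisfy the theorem's comparison hypothesis, so the theorem is untouched, but it kills the symmetry-free principle your argument rests on.

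The repair is to use the curvature comparison pointwise once more, which is what the paper does. Take the strictly positive, strictly decreasing functions $v_i$ on $B_i^c\subseteq\s V$ from Lemma~\ref{l:existence_positive_solutions}, normalized by $v_i(i+1)=1$, and transplant them to $B_i^c\subseteq V$ by $w_i(x)=v_i(r)$ for $x\in S_r$, $r\geq i+1$. Since $v_i(r)-v_i(r+1)>0$ and $v_i(r)-v_i(r-1)<0$, stronger curvature growth ($\ka_+\geq\s\ka_+$, $\ka_-\leq\s\ka_-$) gives $\big(\Lf_i^{(D)}-\tfrac{1}{a_i}\big)w_i(x)\geq\big(\Lf_i^{\mathrm{sym},(D)}-\tfrac{1}{a_i}\big)v_i(r)\geq0$ for $r>i+1$, while on the inner sphere $r=i+1$ one gets $\big(\Lf_i^{(D)}-\tfrac{1}{a_i}\big)w_i(x)\geq\ka_-(x)\geq0$ because $v_i(i+1)=1$. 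Proposition~\ref{positive} then yields $\lm_0(L_i^{(D)})\geq\tfrac{1}{a_i}\to\infty$, and the decomposition principle $\lmess(L)=\lim_i\lm_0(L_i^{(D)})$ gives $\se(L)=\emptyset$, i.e.\ $\si(L)=\sd(L)$. So your inherited summability for $(V,b,m)$ is a nice (and correct) observation, but it cannot replace this transplantation of supersolutions.
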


\smallskip

\begin{remarks}
Let us discuss these results in light of the present literature:

(a) In Section~\ref{s:Application}, we give examples of unweighted graphs with $m \equiv 1$ of polynomial volume growth satisfying the summability criterion above.  For such graphs, it follows that $\Lp$ has positive bottom of the spectrum as well as discrete spectrum.  This stands in clear contrast to the celebrated theorem of Brooks for Riemannian manifolds \cite{Br81} and results for $\Ln$ \cites{Fuj96a, Hi03}  since, for these, subexponential volume growth always implies that the bottom of the essential spectrum is zero.

(b) For statements analogous to Theorem~\ref{t:spectrum} for the Laplacian on spherically symmetric Riemannian manifolds see \cites{BP06, Ha09}.

(c) There are many examples of estimates for the bottom of the spectrum for the graph Laplacian $\Lp$ and normalized graph Laplacian $\Ln$, see, for example, \cites{Do84, DKe86, DoKa88, Mo88, BMS88,  Ura99, Ura00,KP, D06, DM}.  In particular, from the analogue of the Cheeger inequality found in \cite{Do84} it follows that $\lm_0(\Lp) = 0$ if $\frac{V_1(r)}{\partial B(r)} \to 0$ as $r \to \infty$.  Our Theorem~\ref{t:spectrum} complements these results by giving a lower bound for $\lm_0(\Lp)$ in the case of unbounded vertex degree. For  $\Ln$, our result is not applicable as $m(x) = \deg(x)$ implies that $V_1(r) \geq \partial  B(r)$.

(d) Discreteness of the spectrum of the graph Laplacians was studied in \cites{Fuj96b, Ura99, Kel10, Woj09}.  In our context, a characterization for  weighted graphs with positive Cheeger's constant at infinity to have discrete spectrum was recently given in \cite{KelLen10}. In Section~\ref{s:Application}, we discuss how our results complement these and are not implied by any of them, see Corollaries~\ref{c:polynom_spectrum} and \ref{c:edges}.

(e) With regards to Theorem \ref{t:spectral_comparison}, it was shown in \cite{Br91} that the bottom of the spectrum of the graph Laplacian $\Lp$ on a $k$-regular graph is smaller than that on a $k$-regular tree. This was generalized from $k$-regular to arbitrary graphs with degree bounded by $k$ in \cite{Ura99}, which also contains a corresponding lower bound.  Analogous statements for $\Ln$ were proven by different means in \cite{Zuk99}.
\end{remarks}

\subsection{Proofs of Theorems \ref{t:spectrum} and \ref{t:spectral_comparison}}
The proofs of the first statement in Theorem \ref{t:spectrum} and the second statement in Theorem \ref{t:spectral_comparison} are based on the following characterization for the bottom of the spectrum, which is sometimes referred to as a Allegretto-Piepenbrink type of theorem. We refer to \cite{HK}*{Theorem 3.1} for a proof and further discussion of earlier results of this kind.

\begin{proposition}\emph{(Characterization of the bottom of the spectrum, \cite[Theorem~3.1]{HK})} \label{positive}
Let $(V,b,c,m)$ be a weighted graph.  For $\al \in \R$ the following statements are equivalent:
\begin{itemize}
\item[(i)] There exists a non-trivial $v: V \to [0, \infty)$ such that $(\Lf + \al) v \geq 0$.
\item[(ii)] There exists $v: V \to (0, \infty)$ such that $(\Lf + \al) v = 0$.
\item[(iii)] $-\al \leq \lm_0(L).$
\end{itemize}
\end{proposition}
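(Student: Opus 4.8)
The plan is to prove the cycle of implications (ii)$\Longrightarrow$(i)$\Longrightarrow$(iii)$\Longrightarrow$(ii), since (ii)$\Longrightarrow$(i) is trivial (a strictly positive solution of $(\Lf+\al)v=0$ is in particular a nonnegative nontrivial supersolution). The two substantial arcs are (i)$\Longrightarrow$(iii) and (iii)$\Longrightarrow$(ii).

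For (i)$\Longrightarrow$(iii), I would use the supersolution $v$ to perform a ground-state-type transform. Given a nontrivial $v\ge 0$ with $(\Lf+\al)v\ge0$, the first observation is that $v$ must in fact be strictly positive: if $v(x)=0$ at some $x$, then $(\Lf v)(x)=\tfrac{1}{m(x)}\sum_y b(x,y)(v(x)-v(y))=-\tfrac1{m(x)}\sum_y b(x,y)v(y)\le 0$, while $(\Lf+\al)v(x)\ge 0$ forces this sum to vanish, hence $v\equiv 0$ on the neighbors of $x$; by connectedness $v\equiv0$, a contradiction. With $v>0$ in hand, the goal is to show $Q(f,f)\ge -\al\as{f,f}$ for all $f\in C_c(V)$, which gives $\lm_0(L)\ge-\al$ by the Rayleigh–Ritz formula recalled in the previous subsection. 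The standard device is the pointwise "ground state representation": for $f\in C_c(V)$, writing $f=v\cdot g$ with $g=f/v\in C_c(V)$, one expands
\[
Q(f,f)+\al\as{f,f}=\as{(\Lf+\al)(vg),\,vg}
\]
and rearranges using the product/Leibniz-type identity for $\Lf$ on graphs, namely
\[
\Lf(vg)(x)=g(x)\,\Lf v(x)+\frac1{m(x)}\sum_{y}b(x,y)v(y)\bigl(g(x)-g(y)\bigr),
\]
to obtain
\[
Q(f,f)+\al\as{f,f}=\sum_{x}g(x)^2 v(x)\bigl((\Lf+\al)v\bigr)(x)\,m(x)
+\frac12\sum_{x,y}b(x,y)v(x)v(y)\bigl(g(x)-g(y)\bigr)^2.
\]
Both terms on the right are nonnegative — the first because $v>0$ and $(\Lf+\al)v\ge0$, the second because it is a sum of squares with nonnegative coefficients — so $Q(f,f)\ge-\al\as{f,f}$, which is exactly $\lm_0(L)\ge-\al$, i.e. (iii).

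For (iii)$\Longrightarrow$(ii), I would construct a positive solution of $(\Lf+\al)v=0$ by an exhaustion argument on the balls $B_i=B_i(x_0)$. On each finite graph $B_i$ one solves a Dirichlet-type problem — say, $(\Lf+\al)v_i=0$ on the interior of $B_i$ with $v_i$ prescribed to be a fixed positive value at $x_0$ (or one more cleanly takes $v_i=(L_i^{(D)}+\al)^{-1}\de_{x_0}$, which is legitimate because $\lm_0(L_i^{(D)})\ge\lm_0(L)\ge-\al$ with the inequality strict enough to invert, handling the boundary case $\al=-\lm_0$ by a limiting argument over $\al'<\al$ or by noting $-\al$ is not an eigenvalue of the strictly smaller operators $L_i^{(D)}$). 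A minimum-principle argument — in the spirit of Lemma~\ref{l:maximum}, specialized to the stationary equation — shows $v_i>0$ on $B_i$. After normalizing $v_i(x_0)=1$, a Harnack-type / diagonal-compactness argument produces a subsequential pointwise limit $v:V\to[0,\infty)$ with $v(x_0)=1$, which solves $(\Lf+\al)v=0$ everywhere (the equation passes to the limit since $\Lf$ involves only finite local sums), and then the strict-positivity argument from the previous paragraph upgrades $v\ge0$ to $v>0$, giving (ii).

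The main obstacle I expect is the existence/normalization step in (iii)$\Longrightarrow$(ii) at the borderline value $\al=-\lm_0(L)$: there $(\Lf+\al)$ is only "critically positive", so one cannot simply invert the global operator, and the approximants $v_i$ could a priori degenerate (blow up or vanish) along the exhaustion. Controlling this requires a genuine Harnack inequality along the balls — uniform control of $v_i(y)/v_i(x_0)$ on each fixed ball $B_k$ for $i\ge k$, which for locally finite graphs follows by iterating the elementary one-step estimate implicit in the equation $(\Lf+\al)v_i=0$ — to guarantee the limit $v$ is finite and not identically zero. Everything else (the ground-state identity, the minimum principle, the limiting of the equation) is routine given the machinery already set up in the paper; indeed the cleanest route is simply to cite \cite[Theorem~3.1]{HK}, but the sketch above indicates the self-contained argument.
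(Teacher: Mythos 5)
Note first that the paper does not actually prove this proposition: it is quoted from \cite[Theorem~3.1]{HK}, so the only available comparison is with that argument, of which your sketch is essentially the standard version. Your arcs (ii)$\Rightarrow$(i) and (i)$\Rightarrow$(iii) are fine: the propagation argument showing that a non-trivial non-negative supersolution is strictly positive is correct (the potential term vanishes at a zero of $v$, so nothing is lost by omitting it there), and the ground state representation $Q(f,f)+\al\as{f,f}=\sum_x g(x)^2v(x)\big((\Lf+\al)v\big)(x)m(x)+\tfrac12\sum_{x,y}b(x,y)v(x)v(y)(g(x)-g(y))^2$ for $f=vg$, $g\in C_c(V)$, is valid in the locally finite setting and gives $\lm_0(L)\ge-\al$ via the Rayleigh--Ritz characterization.

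The genuine gap is in (iii)$\Rightarrow$(ii). With the pole kept at $x_0$, your approximants cannot satisfy the equation at $x_0$: imposing $(\Lf+\al)v_i=0$ at \emph{every} vertex of $B_i$ with zero outside would force $v_i\equiv0$, since $-\al\le\lm_0(L)<\lm_0(L_i^{(D)})$. So in fact $(\Lf+\al)v_i=c_i\de_{x_0}$ with $c_i>0$, and nothing makes $c_i\to0$ along the exhaustion; indeed, whenever $-\al<\lm_0(L)$ the normalized limits converge to the normalized Green function $G_{-\al}(\cdot,x_0)/G_{-\al}(x_0,x_0)$, which fails the equation precisely at $x_0$. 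Thus your limit is only a non-negative supersolution, i.e. statement (i), which you already have, and the cycle (ii)$\Rightarrow$(i)$\Rightarrow$(iii)$\Rightarrow$(ii) never closes. The standard repair -- and the one used in \cite{HK} -- is to let the inhomogeneity escape to infinity: for instance, solve $(\Lf+\al)w_i=0$ at all vertices of $B_{i-1}$ (including $x_0$) with non-negative boundary data on $S_i$ and zero outside, or place the pole at vertices $x_i$ with $d(x_i,x_0)\to\infty$; positivity comes from the minimum principle/positivity of $(L_i^{(D)}+\al)^{-1}$, which requires the strict inequality $\lm_0(L_i^{(D)})>\lm_0(L)$ that you gesture at -- this should be argued (the Perron--Frobenius ground state of the finite Dirichlet operator is strictly positive on the connected ball, and its extension by zero cannot be an eigenfunction of $L$), not merely asserted. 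Normalizing at $x_0$ and using your Harnack-chain bounds, every fixed vertex is eventually interior, so the diagonal limit solves $(\Lf+\al)v=0$ everywhere, and your propagation argument upgrades it to a strictly positive solution, giving (ii).
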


Therefore, to prove a lower bound on the bottom of the spectrum, it is sufficient to demonstrate a  positive (super-)solution to the difference equation above.  In the weakly spherically symmetric case, we will look for spherically symmetric solutions. 

We now state and prove the following lemma which generalizes a result of \cite{Woj10} and gives the existence of solutions for any initial condition. Note that we allow for a non-negative potential as we will also use this statement in the next section on stochastic completeness.

\begin{lemma}\emph{(Recursion formula for solutions)} \label{radialsolutions}
Let   $(\s V,\s b,\s c,\s m)$ be a weakly spherically symmetric graph and $\al\in\R$.  A spherically symmetric function $v$ is a solution to  $(\s {\ow L}+ \al ) v(r) = 0$ if and only if
\begin{equation*}\label{recursion}
v(r+1) - v(r) = \frac{1}{\partial  B(r)} \sum_{j=0}^r C_{ \p + \al}(j) v(j)
\end{equation*}
where $C_{\p  + \al}(j) = (\s \p(j) + \al)\s m (S_j)$.  In particular, $v$ is uniquely determined by the choice of $v(0)$. Consequently, if $v(0) >0$ and $\al>0$, then $v$ is a strictly positive, monotonously increasing solution.
\end{lemma}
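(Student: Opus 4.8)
The plan is to turn the second-order difference equation $(\s{\ow L}+\al)v(r)=0$ into a telescoped first-order relation and then sum it. Recall that on a weakly spherically symmetric graph $\s{\ow L}$ acts on a spherically symmetric $v$ by
\[ \s{\ow L}v(r)=\s\ka_+(r)\big(v(r)-v(r+1)\big)+\s\ka_-(r)\big(v(r)-v(r-1)\big)+\s\p(r)v(r), \]
with the conventions $v(-1)=0$ and $\s\ka_-(0)=0$ (the latter since $S_{-1}=\emptyset$). Writing $\partial B(r)=\s\ka_+(r)\s m(S_r)$, identity (\ref{eq:curvature and spheres}) reads $\partial B(r)=\s\ka_-(r+1)\s m(S_{r+1})$, i.e.\ $\s\ka_-(r)\s m(S_r)=\partial B(r-1)$ for $r\ge1$; it is convenient also to set $\partial B(-1):=0$, consistent with this relation at $r=0$. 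Multiplying $(\s{\ow L}+\al)v(r)=0$ through by $\s m(S_r)$ and applying these identities gives, for every $r\in\N_0$,
\[ \partial B(r)\big(v(r+1)-v(r)\big)=\partial B(r-1)\big(v(r)-v(r-1)\big)+C_{\p+\al}(r)v(r). \]

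Setting $w(r):=\partial B(r)\big(v(r+1)-v(r)\big)$ (with $w(-1):=0$), the last display is exactly $w(r)=w(r-1)+C_{\p+\al}(r)v(r)$, so summing yields $w(r)=\sum_{j=0}^{r}C_{\p+\al}(j)v(j)$, and dividing by $\partial B(r)$ (positive, see below) produces the claimed recursion. Conversely, if $v$ satisfies the recursion then $w(r)=\sum_{j=0}^{r}C_{\p+\al}(j)v(j)$, whence $w(r)-w(r-1)=C_{\p+\al}(r)v(r)$; dividing by $\s m(S_r)$ and re-using (\ref{eq:curvature and spheres}) runs the computation backwards to recover $(\s{\ow L}+\al)v(r)=0$. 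This establishes the equivalence. Since the recursion expresses $v(r+1)$ through $v(0),\dots,v(r)$, an obvious induction shows $v$ is uniquely determined by $v(0)$.

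For the final assertion I would first note that $\partial B(r)>0$ for all $r$: as $(\s V,\s b,\s m)$ is connected, locally finite and infinite, every $S_r$ is nonempty, and a shortest path from $x_0$ to a point of $S_{r+1}$ exhibits an edge between $S_r$ and $S_{r+1}$, so $\s\ka_+(r)>0$ and hence $\partial B(r)=\s\ka_+(r)\s m(S_r)>0$. Since $\s\p\ge0$ and $\al>0$, we also have $C_{\p+\al}(j)=(\s\p(j)+\al)\s m(S_j)>0$ for all $j$. Now induct on $r$: the base case is $v(0)>0$, and if $v(0),\dots,v(r)>0$ then the recursion gives $v(r+1)-v(r)=\frac{1}{\partial B(r)}\sum_{j=0}^{r}C_{\p+\al}(j)v(j)>0$, so $v(r+1)>v(r)>0$; hence $v$ is strictly positive and strictly (in particular, monotonously) increasing.

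The computation is elementary; the only points requiring care are the $r=0$ base case with the conventions $v(-1)=0$, $\s\ka_-(0)=0$, $\partial B(-1)=0$, and the correct translation between $\s\ka_\pm(r)\s m(S_r)$ and $\partial B(r\mp1)$ via (\ref{eq:curvature and spheres}). I expect the telescoping identity, together with checking that it is genuinely reversible, to be the crux; everything after it is bookkeeping and a one-line induction.
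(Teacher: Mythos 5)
Your proof is correct and amounts to the paper's own argument in a different packaging: multiplying by $\s m(S_r)$ and telescoping the flux $w(r)=\partial B(r)\big(v(r+1)-v(r)\big)$ is the paper's induction (which substitutes the recursion at step $r-1$ into the equation at step $r$ via the same identity (\ref{eq:curvature and spheres})) carried out in one sweep. A minor merit of your write-up is that the telescoped form makes the converse implication and the positivity of $\partial B(r)$ explicit, points the paper leaves implicit.
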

\begin{proof}
The proof is by induction.  For $r=0$, $(\s {\ow L} + \al) v(0) =0$ gives
\[ (\s {\Lf} + \al ) v(0) = {\s \ka_+(0)} \big( v(0) - v(1) \big) + \left( \s\p(0) + \al \right) v(0) =0 \]
which yields the assertion.

Assume now that the recursion formula holds for $r-1$ where $r\geq1$.  Then,  $(\s {\ow L}+ \al)v(r)=0$ reads as
\begin{align*}
\s \ka_+(r) \big( v(r) - v(r+1) \big) + \s \ka_-(r) \big( v(r) - v(r-1) \big)   + \left( {\s \p(r)} + \al \right) v(r) = 0.
\end{align*}
Therefore,
\begin{align*}
v(r+1) - v(r) &= \frac{\s\ka_-(r)}{\s\ka_+(r)} \big( v(r) - v(r-1) \big) + \frac{1}{\s \ka_+(r)} \big(\s \p(r) + \al\big) v(r) \\
 &= \frac{\s \ka_-(r)}{\s \ka_+(r)} \left( \frac{1}{\s \ka_+(r-1) \s m(S_{r-1})} \sum_{j=0}^{r-1} C_{q + \al }(j) v(j) \right) \\
 & \qquad + \frac{1}{\s \ka_+(r)} \big( \s \p(r) + \al \big) v(r) \\
& = \frac{1}{\partial  B(r)} \sum_{j=0}^r  C_{q + \al}(j) v(j)
\end{align*}
by $\s \ka_+(r-1) \s m(S_{r-1}) = \s \ka_-(r) \s m(S_{r})$ as noted in (\ref{eq:curvature and spheres}).

Whenever $\al>0$, the right hand side of the recursion formula is positive from the assumption that $v(0) >0$ which gives the monotonicity statement.
\end{proof}

In order to prove the statements of Theorems~\ref{t:spectrum} and \ref{t:spectral_comparison} concerning the essential spectrum, we need to restrict our operator $L$ to the complements of balls. For $i\in  \N_0$, let $B_i^c = V \setminus B_i$ and $m_{i}^c$ be the restriction of $m$ to $B_i^{c}$.
We restrict the form $Q$ to $C_c(B_i^c)$ and take the closure in $\ell^{2}(B_i^c,m_i^c)$ with respect to $\as{\cdot,\cdot}_Q$. By standard theory, we obtain an operator on $\ell^{2}(B_i^c,m_i^c)$ which we call the restriction of $ L$ with Dirichlet boundary conditions and which we denote by $L_{i}^{(D)}$. Note that, in contrast to the previous section, these operators are now defined on the complement of balls and hence on infinite dimensional spaces.

\begin{lemma}\emph{(Existence of strictly positive solutions)} \label{l:existence_positive_solutions}
Let $(\s V,\s b,\s m)$  be  a weakly spherically symmetric graph. Suppose that $a=\sum_{r=0}^\infty \frac{ V_1(r)}{\partial B(r)}  <\infty$. Then, there exists a strictly positive, strictly monotone decreasing spherically symmetric solution $v$ on $\s V$ to $(\widetilde{L}^{\tu{sym}} -\frac{1}{a})v=0$ which satisfies
\begin{align*}
v(r+1) \geq  1 - \frac{1}{a} \sum_{j=0}^r \frac{V_1(j)}{\partial  B(j)}  \quad\mbox{ for all $r\in\N_0$.}
\end{align*}
Moreover, for all $i\in\N_0$, there exists a strictly positive, strictly monotone decreasing function $v_{i}$ on $B_i^{c}$ solving $( \widetilde{L}^{(D)}_{i} -\frac{1}{a_i})v_{i}=\s\ka_{-}(i+1)1_{S_{i+1}}$ which satisfies
$$v_{i}(r+1) \geq  1 - \frac{1}{a_{i}} \sum_{j=i+1}^r \frac{V^{i}_1(j)}{\partial B(j)}  \mbox{ for all $r\geq i+1$},$$
where $1_{S_{i+1}}(x)$ is $1$ for $x\in S_{i+1}$ and $0$ otherwise, $a_i = \sum_{j=i+1}^\infty \frac{V_1(j)}{\partial B(j) }$ and $V_1^i(j) = \s m(B_j\setminus B_{i})$.
\end{lemma}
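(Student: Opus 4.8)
The plan is to build the solution $v$ as the limit of the recursion from Lemma~\ref{radialsolutions} with $\al = -\frac1a$ and initial value $v(0)=1$, and to verify the asserted lower bound by induction, which will automatically give strict positivity and strict monotone decrease. Concretely, Lemma~\ref{radialsolutions} (with potential $\s q \geq 0$ and $\al = -1/a < 0$) produces the unique spherically symmetric function $v$ with $v(0)=1$ satisfying
\[
v(r+1)-v(r) = \frac{1}{\partial B(r)}\sum_{j=0}^r \bigl(\s q(j) - \tfrac1a\bigr)\s m(S_j) v(j).
\]
First I would argue by induction on $r$ that $v(r) > 0$ and $v(r+1)\geq 1 - \frac1a\sum_{j=0}^r \frac{V_1(j)}{\partial B(j)}$. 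The point is that, assuming $0 < v(j) \leq v(0)=1$ for $j\leq r$ (monotone decrease is part of the induction hypothesis), the increment $v(r+1)-v(r)$ is bounded below by $-\frac{1}{a\,\partial B(r)}\sum_{j=0}^r \s m(S_j) = -\frac{1}{a}\cdot\frac{m(B_r)}{\partial B(r)} = -\frac1a\frac{V_1(r)}{\partial B(r)}$, where one drops the nonnegative terms $\s q(j)\s m(S_j)v(j)$ and uses $v(j)\leq 1$. Summing these increments from $0$ to $r$ telescopes to the claimed bound; since $\sum_{j=0}^r \frac{V_1(j)}{\partial B(j)} < a$ for every finite $r$ (the full sum being exactly $a$), the bound is strictly positive, so $v(r+1)>0$, and the increment is strictly negative (again because $\frac{V_1(r)}{\partial B(r)}>0$ while the dropped terms are nonnegative), giving strict monotone decrease. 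This closes the induction and yields the first assertion.

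For the second assertion I would run the identical argument for the operator $\widetilde L_i^{(D)}$ on $B_i^c$. The key is that, on spherically symmetric functions, $\widetilde L_i^{(D)}$ acts by the same three-term formula as $\s{\widetilde L}$ but with a Dirichlet (zero) boundary condition at $S_i$; concretely, for $x\in S_{i+1}$ the term $\s\ka_-(i+1)(f(i+1)-f(i))$ gets replaced by $\s\ka_-(i+1)f(i+1)$, which is exactly why the inhomogeneous right-hand side $\s\ka_-(i+1)1_{S_{i+1}}$ appears: solving $(\widetilde L_i^{(D)} - \frac1{a_i})v_i = \s\ka_-(i+1)1_{S_{i+1}}$ is equivalent to solving, for $r\geq i+1$, the same recursion as before but started from radius $i+1$ with $v_i(i+1)=1$ and with $v_i(i)$ formally set to $0$. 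I would re-derive the recursion $v_i(r+1)-v_i(r) = \frac{1}{\partial B(r)}\sum_{j=i+1}^r\bigl(\s q(j)-\frac1{a_i}\bigr)\s m(S_j)v_i(j)$ by the same induction as in Lemma~\ref{radialsolutions}, checking the base case $r=i+1$ against the inhomogeneous equation, and then repeat the positivity/monotonicity induction verbatim with $a$ replaced by $a_i$, $V_1(j)$ by $V_1^i(j)=\s m(B_j\setminus B_i)$, and the sums starting at $i+1$; note $\sum_{j=i+1}^r \s m(S_j) = \s m(B_r\setminus B_i) = V_1^i(r)$, which is what produces the $\frac{V_1^i(j)}{\partial B(j)}$ terms.

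The main obstacle is bookkeeping rather than conceptual: one must be careful that the "Dirichlet at $S_i$" modification of the recursion is set up correctly so that the inhomogeneous term $\s\ka_-(i+1)1_{S_{i+1}}$ comes out precisely, and that $a_i$ is indeed finite (which is immediate, $a_i \leq a < \infty$) and positive so that $\frac1{a_i}$ makes sense. A secondary point worth stating explicitly is that $v_i$ so constructed actually lies in $\ell^2(B_i^c, m_i^c)$ — or at least that it is the genuine solution of the stated operator equation rather than merely a formal one — but for the purposes of this lemma it suffices to exhibit the spherically symmetric function satisfying the pointwise equation $(\widetilde L_i^{(D)} - \frac1{a_i})v_i = \s\ka_-(i+1)1_{S_{i+1}}$ together with the stated lower bound, exactly as in the first part.
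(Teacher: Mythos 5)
Your proposal is correct and follows essentially the same route as the paper: the recursion of Lemma~\ref{radialsolutions} with $\al=-\frac{1}{a}$ (respectively its shifted analogue on $B_i^c$ with $v_i(i+1)=1$, whose base case is checked directly against the inhomogeneous equation, exactly as the paper does), followed by an induction yielding positivity, strict decrease and the telescoped lower bound. Just note that in the setting of the lemma $\s c\equiv 0$, so the $\s \p$-terms you carry along vanish identically, and it is this (not merely their nonnegativity) that makes every increment strictly negative; for a genuinely positive potential the strict-decrease claim would fail.
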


\begin{proof}
For $\al = -\frac{1}{a}$ and $\s c \equiv 0$ the recursion formula of Lemma~\ref{radialsolutions}  reads as
\begin{equation*}\label{recursion2}
v(r+1) - v(r) = -\frac{1}{a \partial  B(r) } \sum_{j=0}^r C_{1}(j) v(j),
\end{equation*}
where $C_{1}(j)= \s m(S_{j}) $.
Hence, there exists a solution $v$ of the equation for $v(0)>0$.
In order to prove our assertion, we show by induction that for all $r \in \N_0$,
\begin{itemize}
\item[(i)] $v(r+1) < v(r)$
\item[(ii)] $v(r+1) \geq \left( 1 - \frac{1}{a} \sum_{j=0}^r \frac{V_1(j)}{\partial B(j)} \right) v(0)$
\item[(iii)] $v(r+1) > 0.$
\end{itemize}
For $r=0$, we get from the recursion formula above that
\[ v(1)- v(0) = - \frac{1}{a \partial B(0)} \s m(0) v(0) < 0\]
which gives (i).  Furthermore,
\[ v(1) = \left( 1 - \frac{V_1(0)}{a \partial B(0)} \right) v(0) \]
which gives (ii) and (iii) follows by the choice of $a$.

Now, suppose that (i), (ii), and (iii) hold for $r>0$.  Then, since $v(j) > 0$ for $j=0, 1, \ldots, r$, the recursion formula above yields that $v(r+1) - v(r) < 0$  which gives (i).  Furthermore,
\begin{align*}
v(r+1) &= v(r)  - \frac{1}{a \partial  B(r)} \sum_{j=0}^r C_1(j) v(j) > v(r) - \frac{V_1(r)}{a \partial  B(r)} v(0) \\
& \geq \left( 1 - \frac{1}{a} \sum_{j=0}^{r-1} \frac{V_1(j)}{\partial  B(j) } \right) v(0)   - \frac{V_1(r)}{a \partial  B(r)} v(0)   = \left( 1 - \frac{1}{a} \sum_{j=0}^r \frac{V_1(j)}{\partial  B(j)} \right) v(0)
\end{align*}
which yields (ii) and (iii) follows by the choice of $a$.

For the second statement, we  define the function $v_i$ on $B_i^c$ by $v_i(i+1)=1$ and
\[ v_i(r) =  v_i(r-1) - \frac{1}{a_i \partial  B(r-1)} \sum_{j=i+1}^{r-1} C_1(j) v_i(j) \quad \tu{ for } r > i+1. \]
By a direct calculation one checks that $( L_i^{(D)} - \frac{1}{a_i} ) v_i(i+1) = \ka_-(i+1)\geq0$ and, as in the proof of  Lemma~\ref{radialsolutions}, that $( L_i^{(D)} - \frac{1}{a_i} ) v_i(r) = 0$ for $r > i+1$.
Now, by the same arguments as above, one shows that $v_i$ is strictly monotone decreasing,  satisfies $v_i(r) \geq  1 - \frac{1}{a_i} \sum_{j = i+1}^{r-1} \frac{V_1^i(j)}{\partial  B(j)} $ and is strictly positive.
\end{proof}

\smallskip

\begin{proof}[Proof of Theorem~\ref{t:spectrum}]
By Lemma~\ref{l:existence_positive_solutions}, there is a strictly positive solution to $(\widetilde{L}^{\tu{sym}} -\frac{1}{a})v=0$ where $a=\sum_{r=0}^\infty \frac{ V_1(r)}{\partial  B(r)}$.
This proves that $\lm_0(\s L) \geq \frac{1}{a}$ by the characterization of the bottom of the spectrum, Proposition~\ref{positive}.

We now show that $\si(\s L) = \sd(\s L)$. By standard theory (see, for example, Proposition~18 in \cite{KelLen10}) if follows that
\[ \lmess(\s L) = \lim_{i \to \infty} \lm_0(L_i^{(D)}). \]
By  Proposition~\ref{positive} and the second part of Lemma~\ref{l:existence_positive_solutions}, we have that $\lm_0(L_i^{(D)}) \geq \frac{1}{a_i}.$
Since $a_i \to 0$ as $i \to \infty$, it follows that $\lm_0(L_i^{(D)}) \to \infty$ as $i \to \infty$ so that $\se(\s L) = \emptyset$, that is, $\si(\s L) = \sd(\s L)$.
\end{proof}

\smallskip

In order to prove the spectral comparison, Theorem~\ref{t:spectral_comparison},  we need an analogue of the well-known theorem of Li which links large time heat kernel behavior and the ground state energy \cite{Li86, ChKa91, P04}.  It was recently proven for our setting in \cite{KLVW, HKLW}.

\begin{proposition}\label{t:Li}\emph{(Heat kernel convergence to ground state energy, \cite{KLVW, HKLW})} Let $(V,b,c,m)$ be a weighted graph. For all vertices $x$ and $y$,
\begin{equation*}
\lim_{t \to \infty} \frac{\ln p_t(x,y)}{t} = - \lm_0(L).
\end{equation*}
\end{proposition}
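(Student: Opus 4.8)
The statement to prove is Proposition~\ref{t:Li}, the Li-type theorem asserting that $\lim_{t\to\infty}\frac{\ln p_t(x,y)}{t}=-\lm_0(L)$ for all $x,y$. Since the paper explicitly cites \cite{KLVW, HKLW} for a proof in this exact setting, my plan is to reconstruct the standard argument adapted to weighted graphs on discrete measure spaces rather than to reprove it from scratch.

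The plan is to proceed in two steps, bounding $\limsup$ and $\liminf$ separately, using the spectral theorem for the selfadjoint operator $L$ together with the pointwise identity $p_t(x,y)=\langle e^{-tL}\de_x,\de_y\rangle$. For the easy direction, I would note that $p_t(x,y)=\langle e^{-tL}\de_x,\de_y\rangle \le \|e^{-tL}\de_x\|\,\|\de_y\| = \|e^{-tL}\de_x\|\,m(y)^{-1/2}$, and since $\|e^{-tL}\de_x\|^2 = \int e^{-2t\lm}\,d\mu_{\de_x}(\lm)$ where $\mu_{\de_x}$ is the spectral measure supported in $[\lm_0,\infty)$, one gets $\|e^{-tL}\de_x\|^2 \le e^{-2t\lm_0}\|\de_x\|^2$, hence $\frac{\ln p_t(x,y)}{t}\le -\lm_0 + o(1)$, giving $\limsup_{t\to\infty}\frac{\ln p_t(x,y)}{t}\le -\lm_0(L)$. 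The harder direction is the matching lower bound. Here the natural route is: first reduce to the diagonal using a Cauchy–Schwarz / semigroup argument, since $p_{2t}(x,x)=\langle e^{-tL}\de_x, e^{-tL}\de_x\rangle m(x) \ge$ (by positivity-preserving property and irreducibility coming from connectedness) something comparable to $p_t(x,y)^2$ up to constants depending only on finitely many steps between $x$ and $y$; then it suffices to show $\liminf_{t\to\infty}\frac{\ln p_{t}(x,x)}{t}\ge -\lm_0(L)$. For the diagonal, write $p_t(x,x)m(x) = \int e^{-t\lm}\,d\mu_{\de_x}(\lm)$, and the point is that $\lm_0$ lies in the support of $\mu_{\de_x}$ — this is where connectedness is essential, since it guarantees $\de_x$ has nontrivial spectral mass arbitrarily close to $\lm_0=\inf\si(L)$. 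Concretely, for every $\eps>0$, the spectral projection $E_{[\lm_0,\lm_0+\eps)}$ is nonzero, and by connectedness (irreducibility of the semigroup) $E_{[\lm_0,\lm_0+\eps)}\de_x \neq 0$, so $\int e^{-t\lm}\,d\mu_{\de_x}(\lm) \ge e^{-t(\lm_0+\eps)}\mu_{\de_x}([\lm_0,\lm_0+\eps)) = e^{-t(\lm_0+\eps)}\cdot(\text{const}>0)$, whence $\liminf_{t\to\infty}\frac{\ln p_t(x,x)}{t}\ge -\lm_0-\eps$, and letting $\eps\to 0$ finishes it.

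The off-diagonal-to-diagonal reduction deserves a little care: I would use the semigroup law $p_{t+s}(x,y)=\sum_{z}p_t(x,z)p_s(z,y)m(z)$, so for fixed $x,y$ with $d(x,y)=n$, a path argument gives $p_{t+n}(x,y)\ge p_t(x,x)\cdot c$ where $c$ is a product of $n$ single-step transition probabilities along a shortest path, all strictly positive by connectedness and local finiteness; hence $\ln p_{t+n}(x,y)\ge \ln p_t(x,x) + \ln c$, and the additive constant $\ln c$ and the finite shift $n$ wash out in the limit $\frac{1}{t}$. Symmetrically, $p_{2t}(x,y)\le$ (Cauchy–Schwarz) $\le p_{2t}(x,x)^{1/2}p_{2t}(y,y)^{1/2}$ up to the measure factors, handling the upper bound uniformly in $y$ as well. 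The main obstacle I anticipate is making the "$\lm_0\in\supp\mu_{\de_x}$" / irreducibility step fully rigorous: one must argue that connectedness of $(V,b,c,m)$ implies $e^{-tL}$ is positivity improving in the sense that no proper nonzero invariant subspace spanned by indicator functions exists, so that the bottom of the spectrum is genuinely "seen" by every $\de_x$. This can be extracted from the Perron–Frobenius-type theory for positivity-preserving semigroups (or directly from the fact that $p_t(x,y)>0$ for all $x,y,t>0$, which itself follows from connectedness plus the series expansion $e^{-tL}=\sum_k \frac{(-t)^k}{k!}L^k$ combined with a positivity argument on the resolvent). Everything else is routine manipulation of the spectral representation. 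Since the result is attributed to \cite{KLVW, HKLW}, in the paper itself one would simply cite those; the above is the proof one would find there.
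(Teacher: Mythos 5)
The paper contains no internal proof of Proposition~\ref{t:Li}: it is quoted from \cite{KLVW} and \cite{HKLW}, and your reconstruction follows precisely the route taken there, namely the upper bound $\limsup_{t\to\infty}\frac{1}{t}\ln p_t(x,y)\le -\lm_0(L)$ via $p_t(x,y)=\langle e^{-tL}\de_x,\de_y\rangle$, Cauchy--Schwarz and the spectral theorem, and the lower bound by reducing to the diagonal with Chapman--Kolmogorov and showing that the spectral measure $\mu_{\de_x}$ of $\de_x$ charges $[\lm_0,\lm_0+\eps)$ for every $\eps>0$. So in outline your proposal is correct and is ``the same'' proof in the only sense available here.

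Two points in your sketch need repair before it is complete. First, strict positivity of $p_t(x,y)$ cannot be obtained from the expansion $e^{-tL}=\sum_k\frac{(-t)^k}{k!}L^k$, since $L$ is in general unbounded in this setting; one gets it instead from the exhaustion by the finite-dimensional Dirichlet restrictions $L_i^{(D)}$ (matrix semigroups with nonnegative off-diagonal generators, irreducible on connected balls) together with $p_t\ge p_t^i$, or from the minimum principle. Second, and more substantially, the key claim $E_{[\lm_0,\lm_0+\eps)}\de_x\neq 0$ is not a formal consequence of ``irreducibility'' via Perron--Frobenius, because $\lm_0$ need not be an eigenvalue (and typically is not for the graphs of interest here); the actual argument in \cite{KLVW} is: by density of $C_c(V)$ choose a finitely supported $u$ with $E_{[\lm_0,\lm_0+\eps)}u\neq 0$; since positivity preservation gives $|\langle e^{-tL}w,w\rangle|\le\langle e^{-tL}|w|,|w|\rangle$, one may assume $u\ge 0$; positivity improving then yields $e^{-L}\de_x\ge c\,u$ pointwise (with $c>0$, using that $u$ has finite support), whence $\langle e^{-tL}\de_x,\de_x\rangle=\langle e^{-(t-2)L}e^{-L}\de_x,e^{-L}\de_x\rangle\ge c^2 e^{-(t-2)(\lm_0+\eps)}\mu_u\big([\lm_0,\lm_0+\eps)\big)$, which gives the diagonal lower bound after letting $\eps\to 0$. (Also, trivially, $p_t(x,x)=\langle e^{-tL}\de_x,\de_x\rangle=\int e^{-t\lm}\,d\mu_{\de_x}(\lm)$ with no extra factor $m(x)$; this does not affect the limit.) With these two steps filled in as indicated, your argument is the one found in the cited references.
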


\smallskip
Theorem~\ref{t:spectral_comparison} now follows directly:

\begin{proof}[Proof of Theorem \ref{t:spectral_comparison}] The first statement follows by combining Theorem~\ref{t:HK_comparison} and Proposition~\ref{t:Li}. For the second statement, we first derive $\lm_0(L)\geq \frac{1}{a}$ from the first statement and Theorem~\ref{t:spectrum}. Now, let $v_i$ be the strictly positive monotone decreasing functions which solve $(\Lf^{(D)}-\frac{1}{a_i})v_i=\s\ka_-(i+1)1_{S_{i+1}}$ on $B_i^{c}\subseteq\s V$ with $a_i = \sum_{j=i+1}^\infty \frac{V_1(j)}{\partial B(j)}$ which exist according to Lemma~\ref{l:existence_positive_solutions} for all $i\geq0$. We choose $v_i$ to be normalized by letting $v_i(i+1)=1$. We define $w_i$ on $B_i^{c}\subseteq V$ via $w_i(x)=v_i(r)$ for  $x\in S_r$, $r\geq i+1$. Clearly, by the stronger curvature growth, $(\Lf_i^{(D)}-\frac{1}{a_i})w_i(x) \ge(\Lf_i^{\mathrm{sym},(D)}-\frac{1}{a_i})v_i(r)$ for $x\in S_{r}$, $r> i+1$. On the other hand, for $r=i+1$ and $x\in S_r$ we have, again by the stronger curvature growth, that
$$(\Lf_i^{(D)}-\tfrac{1}{a_i})w_i(x)\ge (\Lf_i^{\mathrm{sym},(D)}-\tfrac{1}{a_i})v_i(i+1) +(\ka_{-}(x)-\s \ka_{-}(i+1)) =\ka_{-}(x)\geq0$$ since $v_i(i+1)=1$.
Hence, $(\Lf_i^{(D)}-\frac{1}{a_i})w_i\ge0$ with $w_i$ strictly positive and we conclude that $\lm_0(L_{i}^{(D)})\geq \frac{1}{a_i}$ by Proposition~\ref{positive}. As $a_i\to 0$, we get, as in the proof of Theorem~\ref{t:spectrum}, that $\se(L)=\emptyset$ and, therefore, $\si(L)=\sd(L)$.
\end{proof}

\section{Stochastic completeness}\label{s:stochastic}

\subsection{Notations and definitions}
The study of the uniqueness of bounded solutions for the heat equation has a long history in both the discrete, see, for example, \cites{Fel57, Reu57}, and the continuous, see \cite{Gri99} and references therein, settings.  In recent years, there has been interest in finding geometric conditions for infinite graphs implying this uniqueness, see, for example, \cites{DM, D06, GHM, Hu, KelLen09, KelLen10, Web10, Woj08, Woj09, Woj10}.  In the  general setting of \cite{KelLen09} it is shown that this uniqueness is equivalent to several other properties as we discuss below.

Let a weighted graph $(V,b,c,m)$ be given.  We let $u_0: V \to \R$ be bounded and call $u : V \times [0, \infty) \to \R$ a \emph{solution of the heat equation with initial condition $u_0$} if, for all $x\in V$, $u(x, \cdot)$ is continuous on $[0, \infty)$, differentiable on $(0,\infty)$ and satisfies
\[ \left\{
\begin{array}{ll} \big( \Lf+ \tpdt \big)u(x,t) = 0 & \tu{ for } x \in V, \ t > 0,\\
  u(x, 0) = u_0(x) & \tu{ for } x \in V.
 \end{array} \right.  \]
The question of the uniqueness of bounded solutions for the heat equation on $(V,b,c,m)$ is then reduced to having $u \equiv 0$ be the only bounded solution for the heat equation with initial condition $u_0 \equiv 0$.

In order to study this question, the following function which was introduced in \cite{KelLen09} turns out to be essential. Let
\[ M_t(x) = e^{-tL}1(x) + \int_0^t e^{-sL} \p (x) ds, \]
where $\p = \frac{c}{m}$ is the normalized potential, $1$ denotes the function whose value is $1$ on all vertices and $e^{-tL}$ is the operator semigroup extended to the space of bounded functions on $V$. The function $e^{-sL}\p$ is defined as the pointwise limit along the net of functions $g\in C_c(V)$ such that $0\leq g\leq \p$, where the net is considered with respect to the natural ordering $g\prec h$ whenever $g\leq h$. As shown in \cite{KelLen09}, this limit always exists and $0\leq M_t(x)\leq 1$ for all $x\in V$.

The function $M_t$ consists of two parts which can be interpreted as follows: the first term, $e^{-tL}1(x)$, is the heat which is still in the graph at time $t$. The integral denotes the heat which was killed by the potential up to time $t$. Thus,  $1-M_t$ can be interpreted as the heat which is transported to the boundary of the graph.

In this setting, Theorem 1 of \cite{KelLen09} (see also Proposition 28 of \cite{KelLen10}) states the following:
\begin{proposition}\emph{(Characterization of stochastic completeness,  \cite[Theorem~1]{KelLen09})} \label{character}
Let $(V,b,c,m)$ be a weighted graph. The following statements are equivalent:
\begin{enumerate}
\item[(i)] There exists $v: V \to [0, \infty)$ non-zero, bounded such that  $( \Lf + \al ) v \leq 0$ for some (equivalently, all) $\al > 0.$
\item[(ii)] There exists $v: V \to (0, \infty)$ bounded such that  $( \Lf + \al ) v = 0$ for some (equivalently, all) $\al > 0.$
\item[(iii)] $M_t(x) < 1$ for some (equivalently, all)  $x \in V$ and $t > 0.$
\item[(iv)]  There exists a non-trivial, bounded solution to the heat equation with initial condition $u_0 \equiv 0.$
\end{enumerate}
\end{proposition}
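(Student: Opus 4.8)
The plan is to close the cycle (ii)$\Rightarrow$(i)$\Rightarrow$(iii)$\Rightarrow$(ii), and then add (iv) by proving (iii)$\Rightarrow$(iv) and (iv)$\Rightarrow$(iii). Since the only condition appearing in this loop that does not mention $\al$ is (iii), once the cycle is closed the ``for some $\al>0$'' and ``for all $\al>0$'' alternatives in (i) and (ii) follow automatically. The implication (ii)$\Rightarrow$(i) is immediate: a strictly positive bounded solution of $(\Lf+\al)v=0$ is in particular a non-zero, nonnegative, bounded function with $(\Lf+\al)v\le 0$.

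For (iii)$\Rightarrow$(iv) I would work directly with $w_t:=1-M_t$. From the definition of $\Lf$ one has $\Lf 1=\p$, and from $\Lf e^{-sL}\p=-\tfrac{d}{ds}e^{-sL}\p$ one obtains the Duhamel identity $\int_0^t\Lf e^{-sL}\p\,ds=\p-e^{-tL}\p$; combining these with $\tpdt M_t=-\Lf e^{-tL}1+e^{-tL}\p$ gives $(\Lf+\tpdt)w_t=0$ for $t>0$, while $w_0=1-M_0=0$. Since $0\le M_t\le 1$, the function $w$ is a bounded solution of the heat equation with zero initial data, so (iii)$\Rightarrow$(iv). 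For (iii)$\Rightarrow$(ii) I would pass to the resolvent: write $G_\al:=(L+\al)^{-1}=\int_0^\infty e^{-\al t}e^{-tL}\,dt$ for $\al>0$, extended from $\ell^2(V,m)$ to bounded nonnegative functions; it is positivity preserving, and since $L$ is a restriction of $\Lf$ it satisfies $(\Lf+\al)G_\al f=f$ pointwise. Fubini's theorem gives $\int_0^\infty\al e^{-\al t}M_t\,dt=\al G_\al 1+G_\al\p$, whence
\[ e_\al:=1-\al G_\al 1-G_\al\p=\int_0^\infty\al e^{-\al t}(1-M_t)\,dt. \]
Then $0\le e_\al\le 1$ and $(\Lf+\al)e_\al=(\p+\al)-\al-\p=0$, so $e_\al$ is a bounded nonnegative solution of $(\Lf+\al)v=0$ which is non-zero precisely when $M_t\not\equiv 1$, i.e.\ under (iii). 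Finally, since the graph is connected and locally finite, a nonnegative solution of $(\Lf+\al)v=0$ vanishing at one vertex vanishes identically (a minimum-principle/irreducibility argument in the spirit of Lemma~\ref{l:maximum}), so $e_\al>0$ everywhere, giving (ii).

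The step I expect to be the real obstacle is the pair of implications (i)$\Rightarrow$(iii) and (iv)$\Rightarrow$(iii), which I would prove by contraposition from a single fact about conservative graphs. Assume $M_t\equiv 1$ for all $t$. The essential input — and this is exactly where conservativeness is used — is that then the comparison principle holds for the heat equation on all of $V\times[0,\infty)$: every bounded supersolution with nonnegative initial data is nonnegative (equivalently, bounded solutions with prescribed bounded initial data are unique). Establishing this is the delicate part; it amounts to the analysis of the heat semigroup on the bounded functions carried out in \cite{KelLen09}. Granting it, (iv)$\Rightarrow$(iii) is immediate, since uniqueness with zero initial data rules out any non-trivial $u$. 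For (i)$\Rightarrow$(iii), let $v\ge 0$ be bounded with $(\Lf+\al)v\le 0$ for some $\al>0$; since $v\ge 0$ and $\al>0$ we have $\Lf v\le-\al v\le\al v$, so $u(x,t):=e^{-\al t}v(x)$ satisfies $(\Lf+\tpdt)u=e^{-\al t}(\Lf v-\al v)\le 0$, i.e.\ $u$ is a bounded subsolution with initial data $v$. Comparing with the bounded solution $e^{-tL}v$, which has the same initial data, yields $e^{-\al t}v\le e^{-tL}v\le\|v\|_\infty$ for all $t\ge 0$, hence $v(x)\le e^{-\al t}\|v\|_\infty\to 0$, so $v\equiv 0$; thus no non-zero such $v$ exists and (i) fails.

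Putting the pieces together closes the cycle (ii)$\Rightarrow$(i)$\Rightarrow$(iii)$\Rightarrow$(ii) and attaches (iv) via (iii)$\Rightarrow$(iv)$\Rightarrow$(iii). The ``some''/``all'' equivalences in (i) and (ii) then drop out because every link in the loop other than (iii) is either trivially uniform in $\al$ or produces a solution for every $\al>0$ (the construction of $e_\al$). The routine technical points to be checked carefully are the pointwise differentiation of $t\mapsto e^{-tL}v$ and of $M_t$ on the bounded functions, the convergence of the ball-exhaustion arguments underlying the minimum principle, and the identities $(\Lf+\al)G_\al f=f$ and $\Lf 1=\p$; the one genuinely hard ingredient is the comparison principle for conservative graphs invoked above.
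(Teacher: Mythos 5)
First, note that the paper itself offers no proof of Proposition~\ref{character}: it is imported verbatim from \cite{KelLen09}*{Theorem 1} (see also Proposition~28 of \cite{KelLen10}), so your attempt has to stand on its own. The parts you do carry out are essentially the standard arguments and are fine modulo the technicalities you flag: (ii)$\Rightarrow$(i) is trivial, (iii)$\Rightarrow$(iv) via $w_t=1-M_t$ is correct, and (iii)$\Rightarrow$(ii) via $e_\al=1-\al G_\al 1-G_\al\p$ together with connectedness is the right construction (one caveat: $\p$ need not be bounded, so $G_\al\p$, like $e^{-sL}\p$ in the paper, must be defined by monotone limits along $C_c(V)\ni g\leq\p$). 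The genuine gap is in closing the cycle. The ``comparison principle when $M_t\equiv1$'' that you grant is, after the one-line reduction you yourself give, exactly the implication $\neg$(iii)$\Rightarrow\neg$(iv); assuming it from \cite{KelLen09} means assuming one of the four equivalences to be proved, so as a self-contained proof the loop is not closed. Moreover, even granting that principle, your derivation of (i)$\Rightarrow$(iii) contains a sign error: with $u(x,t)=e^{-\al t}v(x)$ the comparison only yields $e^{-\al t}v\leq e^{-tL}v\leq\|v\|_\infty$, from which ``$v\leq e^{-\al t}\|v\|_\infty$'' does not follow. You need $u(x,t)=e^{+\al t}v(x)$, which is a subsolution precisely because $(\Lf+\al)v\leq0$, and since this $u$ is unbounded in $t$ you must apply the comparison on strips $V\times[0,T]$ (where it is bounded) to conclude $e^{\al t}v\leq e^{-tL}v\leq\|v\|_\infty$ for all $t$, hence $v\equiv0$.

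Both defects can be repaired without invoking any parabolic uniqueness statement. For (i)$\Rightarrow$(iii) (and simultaneously (i)$\Rightarrow$(ii)) argue elliptically: normalize $\|v\|_\infty\leq1$, let $G^i_\al$ be the Dirichlet resolvents on the balls $B_i$, and apply the finite-set maximum principle (\cite{KelLen09}*{Theorem 8}, as in your Proposition~\ref{t:Has}) to $w_i:=v+\al G^i_\al 1+G^i_\al\p-1$, which satisfies $(\Lf+\al)w_i\leq0$ on $B_i$ and $w_i\leq0$ off $B_i$; letting $i\to\infty$ gives $v\leq\|v\|_\infty\,e_\al$, so $\neg$(iii), i.e.\ $e_\al\equiv0$, forces $v\equiv0$. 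For the remaining link, prove (iv)$\Rightarrow$(i) directly: if $u$ is a non-trivial bounded solution with $u_0\equiv0$, its Laplace transform $h(x):=\int_0^\infty\al e^{-\al t}u(x,t)\,dt$ is bounded, satisfies $(\Lf+\al)h=0$ (integration by parts, using $u_0=0$ and local finiteness to interchange sum and integral), and is non-zero for some $\al$ by uniqueness of Laplace transforms; then $h^+$ (or $(-h)^+$) is a non-zero, bounded, nonnegative function with $(\Lf+\al)h^+\leq0$. With these two replacements the cycle (iv)$\Rightarrow$(i)$\Rightarrow$(iii)$\Rightarrow$(ii)$\Rightarrow$(i) together with (iii)$\Rightarrow$(iv) closes, and your observation that the ``some/all $\al$'' variants follow because the loop passes through the $\al$-free statement (iii) is then correct.
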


Such weighted graphs are called \emph{stochastically incomplete at infinity}.  Otherwise,  a weighted graph is called \emph{stochastically complete at infinity}.  This extends the usual notion of stochastic completeness for the Laplacian to the case where a potential is present.  Clearly, as already discussed in \cite{KelLen09}, when the potential is zero, a stochastically complete weighted graph is also stochastically complete at infinity.

\smallskip

In order to formulate our stochastic completeness comparison theorems, we need to compare the potentials of two weighted graphs, continuing Definition~\ref{d:C_comp}.
\begin{definition}
We say that a weighted graph $(V, b,c,m)$ has \emph{stronger \tu{(respectively,} weaker\tu{)} potential} with respect to $x_0\in V$ than a  weakly spherically symmetric graph $(\s V, \s b, \s c,\s m)$ if, for all $x \in S_r(x_0) \subset V$ and $r \in \N_0$,
\[  \p(x) \geq {\s \p(r)} \qquad \left( \tu{respectively, }\; \p(x) \leq {\s \p(r)} \right). \]
\end{definition}

\smallskip
\subsection{Theorems and remarks}
It is desirable to have conditions which imply stochastic completeness or incompleteness at infinity and this is our goal.  We start with a characterization of stochastic completeness at infinity for weakly spherically symmetric graphs whose proof will be given at the end of the section. It generalizes a result for the graph Laplacian $\Lp$ on spherically symmetric graphs found in \cite{Woj10}.

\begin{theorem}\label{t:stochastic}\emph{(Geometric characterization of stochastic completeness)} A weakly spherically symmetric graph
$(\s V,\s b,\s c,\s m)$  is stochastically complete at infinity if and only if
\[ \sum_{r=0}^\infty \frac{V_{\p+1}(r)}{\partial  B(r)} = \infty \]
where
$$V_{\p+1}(r) =\sum_{x\in B_r }(\s \p(x)+1)\s m(x) \quad \mbox{and} \quad
\partial  B(r) = \s \ka_+(r)\s m(S_r).$$
\end{theorem}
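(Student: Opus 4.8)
The plan is to combine the characterization of stochastic incompleteness at infinity from Proposition~\ref{character} with the recursion formula for spherically symmetric solutions from Lemma~\ref{radialsolutions}. Fix $\al = 1$. Applying Lemma~\ref{radialsolutions} to $(\s V,\s b,\s c,\s m)$ with initial value $v(0)=1$ produces a unique spherically symmetric function $w$ solving $(\s{\ow L}+1)w=0$, which is strictly positive and strictly monotone increasing. Note $\partial B(r) = \s\ka_+(r)\s m(S_r) > 0$ (the graph is connected and infinite, so every sphere has an outgoing edge) and $V_{\p+1}(r) = \sum_{j=0}^r C_{\p+1}(j)$ with $C_{\p+1}(j) = (\s\p(j)+1)\s m(S_j) > 0$. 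The crux of the argument is to prove
\[ w \text{ is bounded} \iff \sum_{r=0}^\infty \frac{V_{\p+1}(r)}{\partial B(r)} < \infty . \]
For this I would insert the bounds $1 = w(0) \le w(j) \le w(r)$, valid for $0\le j\le r$, into the recursion $w(r+1)-w(r) = \frac{1}{\partial B(r)}\sum_{j=0}^r C_{\p+1}(j)\,w(j)$ to obtain the two-sided estimate $\frac{V_{\p+1}(r)}{\partial B(r)} \le w(r+1)-w(r) \le \frac{V_{\p+1}(r)}{\partial B(r)}\,w(r)$. The lower inequality telescopes to $w(r+1) \ge 1 + \sum_{j=0}^r V_{\p+1}(j)/\partial B(j)$, so divergence of the series forces $w\to\infty$; the upper inequality iterates to $w(r+1) \le \prod_{j=0}^r\big(1+V_{\p+1}(j)/\partial B(j)\big) \le \exp\!\big(\sum_{j=0}^r V_{\p+1}(j)/\partial B(j)\big)$, so convergence of the series keeps $w$ bounded.

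With this in hand, the direction in which the series converges is immediate: $w$ is then a bounded, strictly positive solution of $(\s{\ow L}+1)w=0$, which is exactly condition~(ii) of Proposition~\ref{character}, so $(\s V,\s b,\s c,\s m)$ is stochastically incomplete at infinity. For the converse, suppose the series diverges yet the graph is stochastically incomplete at infinity. Then Proposition~\ref{character}(ii) furnishes a bounded $v:\s V\to(0,\infty)$ with $(\s{\ow L}+1)v=0$. Applying the averaging operator $\A$ about the root and using that $\A$ commutes with $\s{\ow L}$ (Lemma~\ref{l:characterization}, applicable since the graph is weakly spherically symmetric), the function $\A v$ is bounded by $\sup_{\s V} v$, strictly positive, spherically symmetric, and solves $(\s{\ow L}+1)\A v = \A\big((\s{\ow L}+1)v\big) = 0$. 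By the uniqueness clause of Lemma~\ref{radialsolutions}, $\A v = (\A v)(x_0)\,w$, hence $\A v$ is unbounded --- contradicting $v$ bounded. Therefore the graph is stochastically complete at infinity, and the equivalence follows.

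The step I expect to be the main obstacle is making the two-sided estimate on $w$ rigorous, in particular the exponential/product comparison for the upper bound and the bookkeeping identity $V_{\p+1}(r) = \sum_{j\le r} C_{\p+1}(j)$ for spherically symmetric $\s\p$; the remaining ingredients are soft, resting only on the already-established equivalences in Proposition~\ref{character}, the commutation relation in Lemma~\ref{l:characterization}, and the uniqueness of spherically symmetric solutions in Lemma~\ref{radialsolutions}. One subtlety worth flagging is that Proposition~\ref{character} speaks of arbitrary bounded positive (super)solutions, whereas the recursion produces only spherically symmetric ones --- the averaging operator $\A$ is precisely what reconciles the two.
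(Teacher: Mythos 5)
Your proof is correct, and the incompleteness direction (series convergent $\Rightarrow$ bounded radial solution $\Rightarrow$ incomplete via Proposition~\ref{character}(ii)), together with your two-sided estimate on $w$, is exactly the paper's route: the estimate you derive inline is precisely Lemma~\ref{l:bounded}, proved there by the same insertion of monotonicity into the recursion of Lemma~\ref{radialsolutions}. Where you genuinely diverge is the completeness direction. The paper does not argue by contradiction; it invokes a Has{\cprime}minski{\u\i}-type criterion (Proposition~\ref{t:Has}): since the radial solution is monotone increasing and unbounded, it satisfies $(\Lf+\al)w\geq 0$ and $w(x)\to\infty$ as $x\to\infty$, and that criterion (proved via the minimum principle of \cite{KelLen09}) yields stochastic completeness at infinity directly. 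You instead take a hypothetical bounded positive solution $v$ of $(\Lf+1)v=0$ furnished by Proposition~\ref{character}(ii), average it with $\A$, use the commutation $\A\Lf=\Lf\A$ of Lemma~\ref{l:characterization} (valid on all of $C(V)$, since spheres are finite by local finiteness and connectedness) and the uniqueness plus linearity of the radial recursion to identify $\A v$ with a positive multiple of the unbounded radial solution, a contradiction. Both arguments are sound. Your averaging route is more self-contained for weakly spherically symmetric graphs: it never leaves the one-dimensional recursion and needs neither Proposition~\ref{t:Has} nor its minimum principle. The paper's route buys a tool that is reused verbatim in the comparison theorem (Theorem~\ref{t:sc_comparison}) for general graphs, where the averaging trick is unavailable because the function transplanted from the symmetric model is only a supersolution and the general graph carries no averaging symmetry.
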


Combining this with Theorem~\ref{t:spectrum} we get an immediate corollary, which is an analogue to theorems found in \cites{BP06, Ha09} for the Laplacian on a Riemannian manifold. The proof is given right after the proof of Theorem~\ref{t:stochastic}.

\begin{corollary}\label{c:stochastic}
If a weakly spherically symmetric graph
$(\s V,\s b,\s c,\s m)$  is stochastically incomplete at infinity, then
\[ \lm_0(\s L) > 0 \tu{ and } \si(\s L) = \sd(\s L). \]
\end{corollary}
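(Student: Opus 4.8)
The plan is to feed the hypothesis through Theorem~\ref{t:stochastic} to obtain the summability condition needed for Theorem~\ref{t:spectrum}, and then to upgrade the conclusions of that theorem to the presence of a potential by reinterpreting the solutions of Lemma~\ref{l:existence_positive_solutions} as supersolutions once the killing term is switched on.

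First I would apply Theorem~\ref{t:stochastic}: stochastic incompleteness at infinity of $(\s V,\s b,\s c,\s m)$ is equivalent to $\sum_{r=0}^\infty \frac{V_{\p+1}(r)}{\partial B(r)}<\infty$. Since $\s\p\geq 0$ we have $V_{\p+1}(r)=\sum_{x\in B_r}(\s\p(x)+1)\s m(x)\geq\sum_{x\in B_r}\s m(x)=V_1(r)$ for all $r$, so $a:=\sum_{r=0}^\infty\frac{V_1(r)}{\partial B(r)}<\infty$; moreover $0<a<\infty$ because the $r=0$ summand is positive. Note that $V_1(r)$, $\partial B(r)$, $a$ and the tails $a_i:=\sum_{j=i+1}^\infty\frac{V_1(j)}{\partial B(j)}$ do not involve the potential, so they coincide with the corresponding quantities of the weakly spherically symmetric triple $(\s V,\s b,\s m)$ obtained by deleting $\s c$.

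For $\lm_0(\s L)>0$ I would take the strictly positive spherically symmetric function $v$ on $(\s V,\s b,\s m)$ with $\ka_+(r)(v(r)-v(r+1))+\ka_-(r)(v(r)-v(r-1))=\tfrac{1}{a} v(r)$ for all $r$ supplied by Lemma~\ref{l:existence_positive_solutions}. Using the formula for the formal Laplacian on spherically symmetric functions given after Definition~\ref{sphericalgraphs}, on the full graph $(\s V,\s b,\s c,\s m)$ this reads $\s{\ow L}v=\tfrac{1}{a} v+\s\p\,v\geq\tfrac{1}{a} v$, since $v>0$ and $\s\p\geq 0$; thus $v$ is a nontrivial nonnegative supersolution for $\al=-\tfrac{1}{a}$, and Proposition~\ref{positive} gives $\lm_0(\s L)\geq\tfrac{1}{a}>0$. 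For $\si(\s L)=\sd(\s L)$ I would run the same argument on ball complements: for each $i$ the strictly positive $v_i$ on $B_i^c$ from the second part of Lemma~\ref{l:existence_positive_solutions} satisfies, on $(\s V,\s b,\s c,\s m)$, $(\widetilde L_i^{(D)}-\tfrac{1}{a_i})v_i=\s\ka_-(i+1)1_{S_{i+1}}+\s\p\,v_i\geq 0$, so Proposition~\ref{positive}, applied to the weighted graph underlying $L_i^{(D)}$, gives $\lm_0(L_i^{(D)})\geq\tfrac{1}{a_i}$. Since $a_i\to 0$, the argument in the proof of Theorem~\ref{t:spectrum} gives $\lmess(\s L)=\lim_{i\to\infty}\lm_0(L_i^{(D)})=\infty$, hence $\se(\s L)=\emptyset$ and $\si(\s L)=\sd(\s L)$.

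The only point beyond a direct citation is the observation that switching on the nonnegative killing term turns the solutions of Lemma~\ref{l:existence_positive_solutions} into supersolutions while preserving their positivity, so I expect no real obstacle. (One could instead quote Theorem~\ref{t:spectrum} for $(\s V,\s b,\s m)$ directly and then invoke form monotonicity together with the min--max principle, since adding $\s c\geq 0$ can only raise each min--max value; the supersolution route is closer to the methods of this section.) The one routine check is that $L_i^{(D)}$ is itself the operator of a weighted graph, with the edges leaving $B_i^c$ and $\s c$ absorbed into its killing term, so that Proposition~\ref{positive} applies --- a device already used in the proof of Theorem~\ref{t:spectral_comparison}.
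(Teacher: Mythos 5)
Your argument is correct, and its skeleton is the paper's: both proofs pass from stochastic incompleteness through Theorem~\ref{t:stochastic} to convergence of $\sum_r V_{\p+1}(r)/\partial B(r)$ and then use $\s\p\geq 0$ to get convergence of the potential-free sum $a=\sum_r V_1(r)/\partial B(r)$. Where you differ is in how the potential is reinstated. The paper applies Theorem~\ref{t:spectrum} as a black box to the triple $(\s V,\s b,\s m)$ and then disposes of the potential with the one-line remark that a non-negative potential only lifts the spectrum via the Rayleigh--Ritz quotient; strictly speaking, carrying the discreteness statement over requires the min--max principle for all eigenvalues (each min--max value increases, so if they tend to infinity without the potential they still do with it), which the paper leaves implicit. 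You instead rerun the proof of Theorem~\ref{t:spectrum} with the potential present: the functions $v$ and $v_i$ of Lemma~\ref{l:existence_positive_solutions} remain strictly positive and become supersolutions, since $(\s{\ow L}-\tfrac1a)v=\s\p\,v\geq 0$ and $(\widetilde L_i^{(D)}-\tfrac1{a_i})v_i=\s\ka_-(i+1)1_{S_{i+1}}+\s\p\,v_i\geq 0$, and then Proposition~\ref{positive} together with $\lmess(\s L)=\lim_i\lm_0(L_i^{(D)})$ and $a_i\to 0$ finishes exactly as in that proof. This buys a fully explicit treatment of discreteness in the presence of the killing term, at the cost of repeating the supersolution computation; the paper's monotonicity route is shorter but more compressed, and you even record it as an alternative. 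Your closing caveat, that $L_i^{(D)}$ is the Laplacian of a weighted graph with the removed edges and $\s c$ absorbed into the killing term so that Proposition~\ref{positive} applies, is precisely the device the paper itself uses in the proofs of Theorems~\ref{t:spectrum} and \ref{t:spectral_comparison}, so nothing further is needed there.
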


\smallskip
\begin{remarks}
(a) The converse statements do not hold.  For example, in the unweighted case, both $\Lp$ and $\Ln$ on regular trees of degree greater than 2 have positive bottom of the spectrum but govern stochastically complete processes.  Furthermore, as shown in \cites{Ura99, Kel10}, for a tree one has $\si(\Lp) = \sd(\Lp)$ whenever the vertex degree goes to infinity along any sequence of vertices which eventually leaves every finite set, while stochastic incompleteness requires that the vertex degree goes to infinity at a certain rate as shown in \cites{Woj09} (see also Section \ref{s:Application}).

(b) The statements of the corollary do not hold for general  weighted graphs.  This can be seen from stability results for stochastic incompleteness at infinity proven in \cites{Woj09, KelLen09, Hu} which state that attaching any graph to a graph which is stochastically incomplete at infinity at a single vertex does not change the stochastic incompleteness.  Therefore, starting with a stochastically incomplete spherically symmetric tree, attachment of  a single path to infinity can  drive the bottom of the spectrum down to zero and add essential spectrum  (as follows by general principles) without effecting the stochastic incompleteness.

(c) The two statements of the corollary are not completely independent: if a Laplacian on a graph has purely discrete spectrum and the constant function $1$ does not belong to $\ell^2(V,m)$ or $c\not \equiv 0$, then the lowest eigenvalue cannot be zero.

\end{remarks}

\smallskip

Our second main result of this section is a comparison theorem for stochastic completeness in the spirit of \cite{Ich82}.

\begin{theorem}\label{t:sc_comparison}\emph{(Stochastic completeness at infinity comparison)}
If a weighted graph $(V,b,c,m)$ has stronger curvature growth and weaker potential (respectively, weaker  curvature growth and stronger potential) than a weakly spherically symmetric graph $(\s V,\s b,\s c,\s m)$ which is stochastically incomplete (respectively, complete) at infinity, then $(V,b,c,m)$
is stochastically incomplete (respectively, complete) at infinity.
\end{theorem}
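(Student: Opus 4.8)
The plan is to reduce both statements to the characterization of stochastic completeness at infinity in Proposition~\ref{character}, and to transplant \emph{spherically symmetric} (super)solutions from the comparison graph $(\s V,\s b,\s c,\s m)$ to $(V,b,c,m)$ by the obvious device of making them constant on the spheres $S_r(x_0)$. Throughout I would fix a reference vertex $x_0\in V$ and some $\al>0$, and use the two explicit formulas after Definition~\ref{sphericalgraphs}: on the comparison graph $\s\Lf v(r)=\s\ka_+(r)(v(r)-v(r+1))+\s\ka_-(r)(v(r)-v(r-1))+\s\p(r)v(r)$ for spherically symmetric $v$, while on $V$, for $x\in S_r(x_0)$ and $w$ constant on spheres, $\Lf w(x)=\ka_+(x)(w(r)-w(r+1))+\ka_-(x)(w(r)-w(r-1))+\p(x)w(r)$. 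The comparison hypotheses serve precisely to compare these two expressions term by term, once the monotonicity of the radial profile is in hand.

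For the \emph{incomplete} case, since $(\s V,\s b,\s c,\s m)$ is stochastically incomplete at infinity, Proposition~\ref{character} gives a bounded $u:\s V\to(0,\infty)$ with $(\s\Lf+\al)u=0$. Averaging $u$ over the spheres about the root of the comparison graph produces a bounded, strictly positive, spherically symmetric $\s v$ still solving $(\s\Lf+\al)\s v=0$, since the averaging operator commutes with $\s\Lf$ on weakly spherically symmetric graphs (Lemma~\ref{l:characterization}); by Lemma~\ref{radialsolutions} applied with $\al>0$, $\s v$ is strictly monotone increasing, hence has a finite supremum. Setting $v(x):=\s v(r)$ for $x\in S_r(x_0)$, the facts $v(r)-v(r+1)\le 0$, $v(r)-v(r-1)\ge 0$, $v(r)>0$ combined with stronger curvature growth $\ka_+(x)\ge\s\ka_+(r)$, $\ka_-(x)\le\s\ka_-(r)$ and weaker potential $\p(x)\le\s\p(r)$ make each summand of $\Lf v(x)$ no larger than the corresponding summand of $\s\Lf\s v(r)$, so $\Lf v(x)\le\s\Lf\s v(r)=-\al v(x)$. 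Thus $v$ is a bounded, non-trivial, non-negative function with $(\Lf+\al)v\le 0$, and Proposition~\ref{character} yields that $(V,b,c,m)$ is stochastically incomplete at infinity.

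For the \emph{complete} case I would argue by contradiction. If $(V,b,c,m)$ were stochastically incomplete at infinity, Proposition~\ref{character} would give a non-trivial bounded $w:V\to[0,\infty)$ with $(\Lf+\al)w\le 0$; put $C:=\sup_V w\in(0,\infty)$. Let $\s v$ be the spherically symmetric solution of $(\s\Lf+\al)\s v=0$ with $\s v(0)=1$ from Lemma~\ref{radialsolutions}: it is strictly positive and strictly increasing, and since $(\s V,\s b,\s c,\s m)$ is stochastically complete at infinity it must be unbounded by Proposition~\ref{character}, so $\s v(r)\to\infty$. Transplanting as above, but now with weaker curvature growth $\ka_+(x)\le\s\ka_+(r)$, $\ka_-(x)\ge\s\ka_-(r)$ and stronger potential $\p(x)\ge\s\p(r)$, the function $\s w(x):=\s v(r)$ for $x\in S_r(x_0)$ satisfies $(\Lf+\al)\s w\ge 0$ on $V$. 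Now fix $i\in\N_0$ and consider $g_i:=w-\tfrac{C}{\s v(i+1)}\s w$; then $(\Lf+\al)g_i\le 0$ on all of $V$, and on $B_i^c$ one has $\s w\ge\s v(i+1)$, hence $\tfrac{C}{\s v(i+1)}\s w\ge C\ge w$ so that $g_i\le 0$ there. Since $B_i$ is finite, a positive maximum of $g_i$ attained at some $x^\ast\in B_i$ would be a global maximum, forcing $\Lf g_i(x^\ast)\ge\p(x^\ast)g_i(x^\ast)\ge 0$ and hence $(\Lf+\al)g_i(x^\ast)\ge\al g_i(x^\ast)>0$, a contradiction; so $g_i\le 0$ on $B_i$, i.e. $w(x)\le\tfrac{C}{\s v(i+1)}\s v(r)$ for $x\in S_r(x_0)\subseteq B_i$. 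Fixing $x$ and letting $i\to\infty$, $\s v(i+1)\to\infty$ gives $w\equiv 0$, contradicting non-triviality, so $(V,b,c,m)$ is stochastically complete at infinity.

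The routine parts are the two term-by-term sign comparisons in the transplantation step and the finite-graph maximum principle on each $B_i$. The only genuinely delicate point is the complete case: one must be sure that stochastic completeness of the comparison graph really forces the monotone radial solution of Lemma~\ref{radialsolutions} to grow without bound — this is Proposition~\ref{character} in the contrapositive, together with the averaging/uniqueness observation that any bounded positive solution is, up to a scalar, this radial one — and then the scaling factor $C/\s v(i+1)$ must be chosen so that $g_i$ has the right sign on $B_i^c$ while remaining a subsolution on $B_i$; granting this, the passage $i\to\infty$ closes the argument at once.
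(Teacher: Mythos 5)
Your proposal is correct and follows essentially the same route as the paper: transplant the monotone spherically symmetric solution of $(\widetilde{L}^{\mathrm{sym}}+\al)v=0$ from Lemma~\ref{radialsolutions} to $V$ as a function constant on spheres, compare $(\Lf+\al)$ term by term using the curvature and potential hypotheses, and conclude via Proposition~\ref{character}. Your deviations are only local: you obtain the bounded radial solution by averaging a bounded positive solution (instead of citing Lemma~\ref{l:bounded} together with Theorem~\ref{t:stochastic}), and in the complete case you inline the minimum-principle argument that the paper packages as the completeness criterion, Proposition~\ref{t:Has}.
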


\begin{remarks}

(a) Note that  a stronger potential can make a stochastically incomplete graph stochastically complete at infinity (compare with Theorem~2 in \cite{KelLen09}). This is due to the definition of $M_t$.  Specifically, the potential kills heat in the graph and, as such, prevents it from being transported to infinity.

(b) For the results above to hold, it suffices that the comparisons hold outside of a finite set. This is due to the fact that stochastic (in)completeness is stable under finite dimensional perturbations, compare \cite{Woj09, KelLen09, Hu}.
\end{remarks}

\smallskip
\subsection{Proofs of Theorems \ref{t:stochastic} and \ref{t:sc_comparison}}
We begin with an observation concerning the  solutions to the difference equation on weakly spherically symmetric graphs which we have encountered before.

\begin{lemma}\label{l:bounded}\emph{(Boundedness of spherically symmetric solutions)}
Let a weakly spherically symmetric graph $(\s V,\s b,\s c,\s m)$ be given.  Let $\al>0$ and  $v:\s V\to(0,\infty)$ be a spherically symmetric function such that $(\s{\ow L}+\al)v=0$. Then, $v$ is unbounded if and only if
\begin{align*}
\sum_{r=0}^\infty \frac{V_{\p+1}(r)}{\partial  B(r)} = \infty.
\end{align*}
\end{lemma}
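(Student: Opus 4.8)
The plan is to exploit the recursion formula from Lemma~\ref{radialsolutions}, which (for $\s c\not\equiv 0$) reads
\[ v(r+1) - v(r) = \frac{1}{\partial B(r)} \sum_{j=0}^r C_{\p+\al}(j)\, v(j), \qquad C_{\p+\al}(j) = (\s\p(j)+\al)\s m(S_j). \]
Since $\al > 0$ and $v(0) > 0$, Lemma~\ref{radialsolutions} already tells us $v$ is strictly positive and monotonically increasing, so the series $\sum_r (v(r+1)-v(r))$ has nonnegative terms and $v$ is unbounded if and only if this telescoping series diverges. Thus the whole statement reduces to comparing $\sum_{r=0}^\infty \frac{1}{\partial B(r)} \sum_{j=0}^r C_{\p+\al}(j) v(j)$ with $\sum_{r=0}^\infty \frac{V_{\p+1}(r)}{\partial B(r)}$, where $V_{\p+1}(r) = \sum_{x\in B_r}(\s\p(x)+1)\s m(x) = \sum_{j=0}^r (\s\p(j)+1)\s m(S_j)$.

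The key observation is a two-sided sandwich. On one hand, since $v$ is increasing, $v(j) \geq v(0) > 0$, and since $\al$ can be taken to be, say, $\le 1$ we have $C_{\p+\al}(j) \geq \al\,(\s\p(j)+1)\s m(S_j) \cdot \min(1,\al)/\ldots$ — more cleanly, $C_{\p+\al}(j) = (\s\p(j)+\al)\s m(S_j) \geq \min(\al,1)(\s\p(j)+1)\s m(S_j)$ when $\al \le 1$, so $\sum_{j=0}^r C_{\p+\al}(j)v(j) \geq \min(\al,1)\, v(0)\, V_{\p+1}(r)$. Hence divergence of $\sum_r \frac{V_{\p+1}(r)}{\partial B(r)}$ forces divergence of the telescoping series, i.e. $v$ unbounded. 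For the converse I would argue the contrapositive: if $a:=\sum_r \frac{V_{\p+1}(r)}{\partial B(r)} < \infty$, then using $\s\p(j)+\al \leq \max(\al,1)(\s\p(j)+1)$ one gets $\sum_{j=0}^r C_{\p+\al}(j)v(j) \leq \max(\al,1)\, v(r)\, V_{\p+1}(r)$ (pulling out $v(r) = \max_{j\le r} v(j)$), so
\[ v(r+1) - v(r) \leq \max(\al,1)\, v(r)\, \frac{V_{\p+1}(r)}{\partial B(r)}, \]
which gives $v(r+1) \leq v(r)\bigl(1 + \max(\al,1)\tfrac{V_{\p+1}(r)}{\partial B(r)}\bigr)$, and iterating yields $v(r+1) \leq v(0)\prod_{s=0}^r \bigl(1 + \max(\al,1)\tfrac{V_{\p+1}(s)}{\partial B(s)}\bigr) \leq v(0)\exp\bigl(\max(\al,1)\, a\bigr) < \infty$. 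So $v$ is bounded, completing the equivalence. (The freedom to replace $\al$ by any $\al'>0$ is harmless since solutions for different positive $\al$ are comparable, or one simply notes the statement is about a fixed given $\al$ and both estimates above work for any $\al>0$ with the constants $\min(\al,1)$, $\max(\al,1)$.)

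I expect the main obstacle to be purely bookkeeping: making sure the constants relating $C_{\p+\al}(j)$ to $(\s\p(j)+1)\s m(S_j)$ are handled cleanly for all $\al>0$ (not just $\al\le 1$), and being careful that in the lower bound one only needs $v(j)\ge v(0)$ while in the upper bound one needs the sharper $v(j)\le v(r)$ inside the inner sum — both of which are immediate from monotonicity. A secondary point worth stating explicitly is why $v$ unbounded $\iff$ $\sum(v(r+1)-v(r))=\infty$: this is exactly because $v$ is monotone increasing, so the partial sums are the values $v(r+1)-v(0)$. No deep input beyond Lemma~\ref{radialsolutions} and elementary $\log$-vs-product estimates is needed; this lemma is essentially a discrete Grönwall-type argument wrapped around the explicit recursion.
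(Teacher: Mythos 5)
Your proposal is correct and follows essentially the same route as the paper: the recursion formula of Lemma~\ref{radialsolutions} plus monotonicity, the bound $v(j)\geq v(0)$ for the divergence direction, and the product estimate $v(r+1)\leq v(0)\prod\bigl(1+\tfrac{V_{\p+\al}(i)}{\partial B(i)}\bigr)$ for the convergence direction. The only difference is cosmetic: you carry the constants $\min(\al,1)$ and $\max(\al,1)$ explicitly, whereas the paper simply notes at the outset that $\sum_r V_{\p+1}(r)/\partial B(r)$ and $\sum_r V_{\p+\al}(r)/\partial B(r)$ diverge simultaneously for any $\al>0$.
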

\begin{proof} We will use the obvious fact that
$\sum_{r=0}^{\infty} \frac{V_{\p+1}(r)}{\partial  B(r)} = \infty$
if and only if $\sum_{r=0}^\infty \frac{V_{\p+ \alpha}(r)}{\partial  B(r)} = \infty$ for some (equivalently, all)  $\alpha >0$.

By the recursion formula of Lemma~\ref{radialsolutions}, the function $v$ satisfies
\begin{equation*}\label{recursion3}
 v(r+1) - v(r) = \frac{1}{\partial  B(r)} \sum_{i=0}^r C_{\p+ \al }(i) v(i)
\end{equation*}
where $C_{\p+\al}(r) = (\s \p(r) + \al) \s m(S_r)$ and consequently is monotonously increasing as $v(0)>0$.   Hence, it satisfies
\[ v(r+1) - v(r) \geq \frac{V_{\p+\al }(r)}{\partial  B(r)} v(0) \]
where $V_{\p+\al}(r) =  \sum_{x \in B_r} (\s q (x)  + \alpha ) \s m(x) = \sum_{i=0}^r C_{\p+\al}(i)$.
Therefore, if $\sum_{r=0}^\infty \frac{V_{\p+1}(r)}{\partial  B(r)} = \infty$, then $v(r)=\sum_{i=0}^{r-1} ( v(i+1) - v(i)) \to \infty$ as $r\to\infty$. Hence, $v$ is unbounded in this case.

On the other hand, the recursion formula and monotonicity imply that
\[ v(r+1) \leq \left( 1 + \frac{V_{q+\al}(r)}{\partial  B(r) } \right) v(r) \leq \prod_{i=0}^r \left( 1 + \frac{ V_{q+\al}(i) }{\partial  B(i) } \right) v(0). \]
Therefore, if $\sum_{r=0}^\infty \frac{V_{\p+1}(r)}{\partial  B(r)} < \infty$, then $\prod_{r=0}^\infty \left( 1 + \frac{V_{\p+\al}(r)}{\partial  B(r)} \right) < \infty$ so that $v$ is bounded.
\end{proof}

We combine the lemma above with the characterizations of Proposition~\ref{character} in order to prove stochastic incompleteness at infinity of weakly spherically symmetric graphs. For the other direction, we will need the following criterion for stochastic completeness at infinity which is an analogue for a criterion of Has{\cprime}minski{\u\i} from the continuous setting \cite{Has60}.  Recently, Huang \cite{Hu} has proven a slightly stronger version in the case $c\equiv0$. 
For $v:V\to\R$, we write
\begin{align*}
    v(x)\to\infty\quad\mbox{as }x\to\infty
\end{align*}
whenever for every $C\geq 0$ there is a finite set $K$ such that $v\vert_{V\setminus K}\geq C$.

\begin{proposition}\label{t:Has}\emph{(Condition for stochastic completeness)}
If on a weighted graph $(V,b,c,m)$  there exists $v$ such that
$(\Lf + \al) v \geq 0$
 and
$ v(x) \to \infty \tu{ as } x \to \infty,$
then $(V,b,c,m)$ is stochastically complete at infinity.
\end{proposition}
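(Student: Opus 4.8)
The plan is to mimic the classical Khasminskii argument adapted to the discrete, potential-present setting. By the characterization of stochastic completeness at infinity, Proposition~\ref{character}, it suffices to show that $M_t(x) = 1$ for all $x \in V$ and $t > 0$; equivalently, that the only bounded solution $u$ to the heat equation with initial condition $u_0 \equiv 0$ is $u \equiv 0$. So suppose $u$ is such a bounded solution, say $|u| \leq C_0$ on $V \times [0,T]$ for every finite $T$, and let $v$ be the given function with $(\Lf + \al) v \geq 0$ and $v(x) \to \infty$ as $x \to \infty$. Without loss of generality (by Proposition~\ref{character}, since the condition $(\Lf+\al)v \geq 0$ for some $\al > 0$ is equivalent to the analogous statement, and replacing $v$ by $v + \text{const}$ if needed) we may take $v > 0$ and $v \to \infty$.

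The key construction is a comparison function of the form $w(x,t) = \eps\, e^{\al t} v(x) - u(x,t)$ on a large ball $B_R \times [0,T]$, for $\eps > 0$ fixed. The idea is that $\eps e^{\al t} v$ is a supersolution of the heat equation: compute
\begin{align*}
\big(\Lf + \tpdt\big)\big(\eps e^{\al t} v(x)\big) &= \eps e^{\al t}\big(\Lf v(x) + \al v(x)\big) = \eps e^{\al t}(\Lf + \al) v(x) \geq 0.
\end{align*}
Hence $(\Lf + \tpdt) w \geq 0$ on $B_R \times [0,T]$. On the parabolic boundary we check: at $t = 0$, $w(x,0) = \eps v(x) - u(x,0) = \eps v(x) > 0$ since $v > 0$; and on $B_R^c \times [0,T]$, since $v(x) \to \infty$ we can choose $R = R(\eps, C_0, T)$ large enough that $\eps e^{\al T} v(x) \geq C_0 \geq u(x,t)$ for all $x \notin B_R$, so $w \geq 0$ there. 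The negative part of $w$ attains its minimum on the compact set $B_R \times [0,T]$. Therefore the minimum principle for the heat equation, Lemma~\ref{l:maximum}, applied with $U = B_R$, gives $w \geq 0$ on $B_R \times [0,T]$, i.e. $u(x,t) \leq \eps e^{\al t} v(x)$ for $x \in B_R$. Running the same argument with $-u$ in place of $u$ yields $|u(x,t)| \leq \eps e^{\al t} v(x)$.

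Now fix an arbitrary vertex $x$ and time $t \in [0,T]$. For all sufficiently small $\eps > 0$ we have (once $R$ is chosen large enough to contain $x$, which is possible since the required $R$ depends only on $\eps, C_0, T$ and the exhaustion is by the fixed balls) the bound $|u(x,t)| \leq \eps e^{\al t} v(x)$. Letting $\eps \to 0$ forces $u(x,t) = 0$. Since $x$ and $t$ were arbitrary, $u \equiv 0$, which is what we needed. The main obstacle is the careful handling of the parabolic boundary condition on $B_R^c$: one must ensure that the choice of $R$ making $\eps e^{\al T} v \geq C_0$ outside $B_R$ is legitimate, and this is exactly where the hypothesis $v(x) \to \infty$ as $x \to \infty$ is used — it guarantees such an $R$ exists for each $\eps$. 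A secondary point requiring attention is verifying the hypotheses of Lemma~\ref{l:maximum} precisely (the differentiability of $w(x,\cdot)$, which follows from that of $u(x,\cdot)$, and the attainment of the minimum of the negative part, which follows from compactness of $B_R \times [0,T]$), and confirming that the subset $B_R$ is connected, which holds since the graph is connected.
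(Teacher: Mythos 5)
Your argument is correct, but it takes a genuinely different route than the paper. You work with the parabolic characterization (item (iv) of Proposition~\ref{character}): you show that any bounded solution $u$ of the heat equation with $u_0\equiv 0$ vanishes, by comparing it on $B_R\times[0,T]$ with the supersolution $\eps e^{\al t}v$ and invoking the parabolic minimum principle, Lemma~\ref{l:maximum}, then letting $\eps\to 0$. The paper instead argues elliptically: given $0\le w\le 1$ with $(\Lf+\al)w=0$ (item (ii) of Proposition~\ref{character}), it sets $u=v-Cw$, notes $(\Lf+\al)u\ge 0$ and $u\ge 0$ off the finite set where $v<C$, and applies the elliptic minimum principle of \cite[Theorem~8]{KelLen09} to get $v\ge Cw$ for every $C>0$, forcing $w\equiv 0$. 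Both are comparison arguments with the same function $v$, but yours is the classical Has{\cprime}minski{\u\i}-type parabolic scheme (scale $v$ down by $\eps$), while the paper's is shorter and purely elliptic (scale $w$ up by $C$), needing no exhaustion by balls, no time interval, and no positivity normalization of $v$; your version has the merit of relying only on Lemma~\ref{l:maximum}, which is proved in this paper, rather than on an external minimum principle. Two small points in your write-up deserve tightening but are not gaps: on $B_R^c$ you should choose $R$ so that $\eps\, v(x)\ge C_0$ (not merely $\eps e^{\al T}v(x)\ge C_0$), since the comparison must hold for all $t\in[0,T]$ and the factor $e^{\al t}$ is smallest at $t=0$ --- the existence of such $R$ is exactly what $v(x)\to\infty$ gives; and a bounded solution of the heat equation is only guaranteed differentiable in $t$ on $(0,\infty)$, so the hypotheses of Lemma~\ref{l:maximum} hold literally only for $t>0$, which suffices because in its proof the case $t=0$ is disposed of by condition (c), where you have $w(x,0)=\eps v(x)\ge 0$.
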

\begin{proof}
Suppose there is a function $0\leq w\leq 1$ that solves $(\ow L+\al)w=0$. For given $C>0$ let $K\subset V$ be a finite set such that $v\vert_{V\setminus K}\geq C$. Then $u=v-Cw$ satisfies $(\ow L+\al)u\geq0$ on $V$ and $u\geq0$ on $V\setminus K$. As $K$ is finite, the negative part of $u$ attains its minimum on $K$. Therefore, by a minimum principle, see \cite[Theorem~8]{KelLen09}, $u\geq0$ on $K$. Hence, $v\geq Cw$ for all $C>0$. This implies that $w\equiv0$. By Proposition~\ref{character}, stochastic completeness at infinity follows.
\end{proof}

\begin{proof}[Proof of Theorem~\ref{t:stochastic}]
By Lemma~\ref{radialsolutions}, there is always a strictly positive, spherically symmetric solution to $(\widetilde{L}^{\tu{sym}} + \al) v = 0$ for $\al>0$. By Lemma~\ref{l:bounded}, this solution is bounded if and only if the sum in the statement of  Theorem~\ref{t:stochastic} converges. In the case of convergence, we conclude stochastic incompleteness at infinity by Proposition~\ref{character}. On the other hand, if the sum diverges, the solution is unbounded and satisfies the assumptions of Proposition~\ref{t:Has} (by the spherical symmetry). Hence, the graph is stochastically complete at infinity.
\end{proof}

\begin{proof}[Proof of Corollary~\ref{c:stochastic}]
If the graph $(\s V,\s b,\s c,\s m)$ is stochastically incomplete at infinity, then the sum of Theorem~\ref{t:stochastic} converges. As a consequence, the sum of Theorem~\ref{t:spectrum} converges which implies positive bottom of the spectrum and discreteness of the spectrum of the graph $(\s V,\s b,\s m)$. Now, a non-negative potential only lifts the bottom of the spectrum (as can be seen from the Rayleigh-Ritz quotient) which gives the statement.
\end{proof}

\begin{proof}[Proof of Theorem \ref{t:sc_comparison}]
Let $(\s V,\s b,\s c,\s m)$ be a weakly spherically symmetric graph.  Let $v$ be the spherically symmetric solution with $ v(0) =1$  given by Lemma~\ref{radialsolutions} for $\al>0$. Then, $v$ is strictly positive and  monotonously increasing. Define $w:V\to(0,\infty)$ on the weighted graph $(V,b,c,m)$ by $w(x)=v(r)$ for $x\in S_r(x_0)\subset V$.

First, assume the stochastic incompleteness at infinity of  $(\s V,\s b,\s c,\s m)$.
Then, by Lemma~\ref{l:bounded} combined with Theorem~\ref{t:stochastic}, the function $v$, and thus $w$, is bounded. Under the assumptions that $(V,b,c,m)$ has stronger curvature growth and weaker potential than  $(\s V,\s b,\s c,\s m)$ and, as $v$ is monotonously increasing by Lemma~\ref{radialsolutions}, the function  $w$ satisfies for $x \in S_{r}$, $r \in \N$,
\begin{align*}
\big( \Lf + \al \big)& w(x)\\
&= \ka_{+}(x) \big( v(r) - v(r+1) \big) + \ka_{-}(x) \big( v(r) - v(r-1) \big)  + \left(\p(x) + \al \right) v(r)\\&
\leq \big( \widetilde{L}^{\tu{sym}}  + \al \big) v(r) = 0.
\end{align*}
Hence, the graph $(V,b,c,m)$ is stochastically incomplete at infinity,  by Proposition~\ref{character}.

Assume now that  $(\s V,\s b,\s c,\s m)$ is stochastically complete at infinity. Then, again by Proposition~\ref{character}, the function $v$ and, thus $w$, is unbounded. As above, under the assumptions of weaker curvature growth and stronger potential, one checks that
\[ \big( \Lf + \al \big) w(x) \geq \big(  \widetilde{L}^{\tu{sym}}  + \al \big) v(r) = 0 \]
for all $x\in S_{r}\subset V$, $r \in \N_0$. Since $w$ is unbounded, $w(x) \to \infty$ as $x \to \infty$, therefore, the weighted graph $(V,b,c,m)$ is stochastically complete at infinity by the condition for stochastic completeness, Proposition~\ref{t:Has}.
\end{proof}

\section{Applications to graph Laplacians}\label{s:Application}
In this section we  discuss the results of this paper for unweighted graphs. Thus, we consider the situation  $b:V \times V \to \{ 0,1 \}$ and  $c\equiv0$. Chosing $m\equiv 1$ and $m \equiv \deg$ we obtain the two graph Laplacians, that is, $\Lp$ on $\ell^2(V)= \ell^2(V,1)$  and $\Ln$ on $\ell^{2}(V,\deg)$.

Note that, in the setting for $\Lp$, the curvatures $\ka_\pm :V \to \N$ denote the number of edges connecting a vertex $x$, which lies in the sphere of radius $r = d(x,x_0)$, to vertices in the spheres of radius $r\pm1$.  Therefore, we can think of  $\ka_{-}$ and $\ka_{+}$ as the inner and outer vertex degrees, respectively. On the other hand, for $\Ln$, the curvatures $\oh\ka_\pm :V \to \Q$ are inner and outer vertex degree divided by the degree.

\bigskip

In this situation, we have the following corollaries of Theorem~\ref{t:HK_wsp} for the heat kernels.

\begin{corollary}The operator $\Lp$ has a spherically symmetric heat kernel if and only if the inner and outer vertex degrees are spherically symmetric.
\end{corollary}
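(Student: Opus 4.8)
The plan is to derive this corollary directly from Theorem~\ref{t:HK_wsp}, which states that a weighted graph has a spherically symmetric heat kernel if and only if it is weakly spherically symmetric, i.e., if and only if $\ka_\pm$ and $\p$ are spherically symmetric functions. First I would specialize the setup to $\Lp$: here $b : V\times V \to \{0,1\}$, $c\equiv 0$, and $m\equiv 1$. Since $c\equiv 0$, the normalized potential $\p = c/m$ vanishes identically, hence is trivially spherically symmetric. So by Theorem~\ref{t:HK_wsp} the heat kernel of $\Lp$ is spherically symmetric if and only if $\ka_+$ and $\ka_-$ are spherically symmetric.

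The remaining step is to identify $\ka_\pm$ with the inner and outer vertex degrees in this unweighted, unit-measure case. By the Definition of the curvatures, for $x\in S_r$ we have $\ka_\pm(x) = \frac{1}{m(x)}\sum_{y\in S_{r\pm 1}} b(x,y) = \sum_{y\in S_{r\pm 1}} b(x,y)$, since $m(x)=1$; and because $b$ takes values in $\{0,1\}$, this sum simply counts the number of neighbors of $x$ lying in $S_{r\pm 1}$, which is precisely the outer (resp.\ inner) vertex degree of $x$. Thus ``$\ka_\pm$ spherically symmetric'' is literally the same statement as ``the inner and outer vertex degrees depend only on $d(x,x_0)$'', which is the assertion of the corollary.

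Finally, I should note that the existence of a vertex $x_0$ is quantified correctly on both sides: ``spherically symmetric heat kernel'' in the Definition means there exists $x_0$ with the stated commutation property, and ``inner/outer vertex degrees are spherically symmetric'' should be read as: there exists $x_0$ with respect to which these functions depend only on the distance. So the biconditional transfers verbatim. There is essentially no obstacle here — the only point requiring a word of care is observing that $c\equiv 0$ forces $\p\equiv 0$, so the potential condition in Theorem~\ref{t:HK_wsp} is vacuous and plays no role; the corollary is then an immediate translation of the definitions.
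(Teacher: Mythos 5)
Your proposal is correct and follows exactly the paper's (implicit) argument: specialize Theorem~\ref{t:HK_wsp} to $b\in\{0,1\}$, $c\equiv 0$, $m\equiv 1$, note that $\p\equiv 0$ makes the potential condition vacuous, and identify $\ka_\pm$ with the outer and inner vertex degrees. Nothing is missing.
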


\begin{corollary}The operator $\Ln$ has a spherically symmetric heat kernel if and only if the ratio of  inner and outer vertex degrees and the vertex degree are spherically symmetric.
\end{corollary}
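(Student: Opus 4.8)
The plan is to derive this as a direct consequence of Theorem~\ref{t:HK_wsp}, applied to the specific weighted graph $(V, b, \deg)$ corresponding to $\Ln$. By Theorem~\ref{t:HK_wsp}, the heat kernel of $\Ln$ is spherically symmetric if and only if the underlying weighted graph $(V, b, c, m)$ with $b : V \times V \to \{0,1\}$, $c \equiv 0$, and $m \equiv \deg$ is weakly spherically symmetric, i.e., if and only if the curvatures $\oh\ka_\pm$ and the normalized potential $\p = c/m$ are spherically symmetric functions. Since $c \equiv 0$ here, $\p \equiv 0$ is automatically spherically symmetric, so the condition reduces to $\oh\ka_+$ and $\oh\ka_-$ being spherically symmetric.

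Next I would unwind what $\oh\ka_\pm$ is for $\Ln$. By the definition of the curvatures with $m(x) = \deg(x)$, for $x \in S_r$ we have $\oh\ka_\pm(x) = \frac{1}{\deg(x)} \sum_{y \in S_{r\pm 1}} b(x,y) = \frac{d_\pm(x)}{\deg(x)}$, where $d_\pm(x)$ denotes the outer/inner vertex degree, i.e., the number of edges from $x$ to the sphere $S_{r\pm 1}$. So the condition becomes: the ratios $d_+/\deg$ and $d_-/\deg$ are both spherically symmetric.

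The remaining step is to check that ``$d_+/\deg$ and $d_-/\deg$ are spherically symmetric'' is equivalent to ``$\deg$ is spherically symmetric and one of the ratios (equivalently, by our phrasing, the ratio of inner and outer degrees) is spherically symmetric.'' One direction is trivial: if $\deg$ is spherically symmetric and, say, $d_-/d_+$ is spherically symmetric on each sphere, then combined with the identity $d_+(x) + d_-(x) = \deg(x)$ for $x \in S_r$ (valid for $r \ge 1$; on $S_0$ there is no inner degree) one recovers that $d_+/\deg$ and $d_-/\deg$ are spherically symmetric. For the converse, if both $d_+/\deg$ and $d_-/\deg$ are constant on each sphere, then their sum $(d_+ + d_-)/\deg \equiv 1$ on spheres of radius $r \ge 1$ gives no information, but — and this is the only subtle point — one must produce spherical symmetry of $\deg$ itself. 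I expect the main obstacle to be precisely this: a naive argument does not force $\deg$ to be constant on a sphere just from the ratios being constant. The resolution is to observe that spherical symmetry of the heat kernel is equivalent (Lemma~\ref{l:characterization2}) to $A$ commuting with $e^{-t\Ln}$, hence with $\Ln$ itself on its domain; testing this against $1_{S_r}$ as in the proof of Lemma~\ref{l:characterization} forces $\frac{1}{m(S_r)}\sum_{y \in S_r}(\oh\ka_+(y) + \oh\ka_-(y))m(y)$ to equal $\oh\ka_+(x) + \oh\ka_-(x)$ for each $x \in S_r$, and since $m = \deg$ and $\oh\ka_+ + \oh\ka_- \equiv 1$ on $S_r$ for $r \ge 1$, this is automatic and gives nothing — so instead one uses $1_{B_r}$, or argues directly: spherical symmetry of $d_+/\deg$ together with $d_+(x)m(S_r)$-type identity \eqref{eq:curvature and spheres}, namely $\oh\ka_+(r)\deg(S_r) = \oh\ka_-(r+1)\deg(S_{r+1})$, links the numerators $\sum_{S_r} d_+$ across spheres, and combined with spherical symmetry of $d_+/\deg$ one can solve for $\deg$ on spheres recursively. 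I would then phrase the corollary's conclusion as: $\Ln$ has a spherically symmetric heat kernel iff $\deg$ is spherically symmetric and the ratio $d_-/d_+$ (equivalently $d_+/\deg$) is spherically symmetric, matching the stated ``ratio of inner and outer vertex degrees and the vertex degree are spherically symmetric.''

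Given the length constraints I would not grind through the recursion; I would state that the equivalence of the two formulations is a direct computation using $d_+ + d_- = \deg$ on spheres of positive radius and the sphere-volume identity \eqref{eq:curvature and spheres}, and refer back to Theorem~\ref{t:HK_wsp} and Lemma~\ref{l:characterization} for the substantive content.
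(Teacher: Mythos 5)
Your first two paragraphs already constitute the entire intended proof, and they follow the paper exactly: apply Theorem~\ref{t:HK_wsp} to the weighted graph $(V,b,\deg)$ with $c\equiv0$, so that the heat kernel of $\Ln$ is spherically symmetric if and only if $\oh\ka_\pm$ are spherically symmetric, and observe that for $x\in S_r$ one has $\oh\ka_\pm(x)=d_\pm(x)/\deg(x)$, where $d_\pm(x)$ denotes the number of neighbours of $x$ in $S_{r\pm1}$. This is precisely what the corollary asserts: the phrase ``the ratio of inner and outer vertex degrees and the vertex degree'' refers to the two quotients $d_+/\deg$ and $d_-/\deg$ (compare the sentence preceding the corollaries, which says that for $\Ln$ the curvatures are ``inner and outer vertex degree divided by the degree''), not to the pair consisting of $d_-/d_+$ and $\deg$. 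There is nothing further to prove.

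Your third paragraph, in which you try to pass to your reading of the statement, contains genuine errors rather than merely unnecessary work. The identity $d_+(x)+d_-(x)=\deg(x)$ is false in general, since a vertex may have neighbours inside its own sphere: $\deg=d_++d_-+d_0$ with $d_0\geq0$, and correspondingly $\oh\ka_++\oh\ka_-\equiv1$ fails (in-sphere edges are explicitly allowed here; see Figure~\ref{f:wsp_vs_sp} and the graphs $G_k$ obtained by joining all vertices of each sphere of a tree in Section~\ref{s:Application}). More importantly, spherical symmetry of $d_+/\deg$ and $d_-/\deg$ does \emph{not} force $\deg$ to be spherically symmetric, so the recursion you hope to extract from (\ref{eq:curvature and spheres}) cannot exist: for instance, let $S_1=\{a,b,c\}$, each joined to $x_0$, let $S_2=\{u,v\}$ with $u$ joined to $a$ and to one vertex of $S_3$, and $v$ joined to $b$, $c$ and to two vertices of $S_3$, and continue in this proportional fashion; then $\oh\ka_+\equiv\oh\ka_-\equiv\tfrac12$ on $S_2$ although $\deg$ takes the values $2$ and $4$ there. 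Such graphs have a spherically symmetric $\Ln$-heat kernel with non-symmetric vertex degree, which is exactly what the remark following the corollary means by ``even less symmetry than the graphs of Figure~\ref{f:wsp_vs_sp} possess is needed.'' Under your reading both implications of the corollary would in fact be false (the in-sphere term $d_0$ also invalidates the direction you call trivial), so the statement must be read, and proved, as in your first two paragraphs.
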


Note that, in order to have a spherically symmetric heat kernel for $\Ln$, even less symmetry than the graphs of Figure~\ref{f:wsp_vs_sp} possess is needed.

\medskip

Next, we fix $m \equiv 1$ to focus on $\Lp$ and give examples of weakly spherically symmetric graphs which satisfy our summability condition.
We start with the case of unweighted spherically symmetric trees.  Here, $\ka_-(r)=1$ and, if $V(r):= V_1(r)$, the summability criterion of Theorems~\ref{t:spectrum} and \ref{t:stochastic} concerns the convergence or divergence of
$$ \sum_{r=0}^\infty\frac{V(r)}{\partial  B(r)} =\sum_{r=0}^\infty\frac{|B_{r}|}{|S_{r+1}|}$$
which can be seen to be equivalent to the convergence or divergence of
$\sum_{r=0}^\infty \frac{1}{\ka_+(r)}$,
see \cite{Woj10}.

Using this, it follows easily, as was already shown in \cite{Woj09}, that the threshold for the volume growth for stochastic completeness of spherically symmetric trees lies at $r!\sim e^{r\log r}$.  This already stands in contrast to the result of Grigor{\cprime}yan which puts the threshold for stochastic completeness of manifolds at $e^{r^{2}}$ \cite{Gri99}. However, this is still in line with the result of Brooks which yields that subexponential growth implies absence of a spectral gap for manifolds \cite{Br81}.

\bigskip

We now come to the family of spherically symmetric graphs, here called antitrees, which yield our surprising examples.
\begin{definition} An unweighted weakly spherically symmetric graph with $m \equiv 1$ is called an \emph{antitree} if $\ka_{+}(r)=|S_{r+1}|$ for all $r\in\N_0$.
\end{definition}

As opposed to trees, which are connected graphs with as few connections as possible between spheres, antitrees have the maximal number of connections.  The contrast is illustrated in Figure~\ref{f:tree_vs_antitree}.  Such graphs have already been used as examples in \cite{DoKa88,Woj10,Web10}.

\begin{center}
\begin{figure}[h]
\includegraphics[scale=0.5]{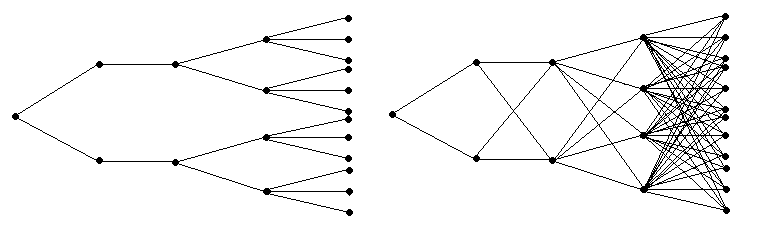}
\caption{A tree and an antitree}
\label{f:tree_vs_antitree}
\end{figure}
\end{center}

In the case of antitrees, the summability criterion concerns the convergence or divergence of $$\sum_{r=0}^\infty\frac{V(r)}{\partial  B(r)} =\sum_{r=0}^\infty\frac{|B_{r}|}{|S_{r}||S_{r+1}|}.$$
For functions $f,g:\N_0\to\R$ we write $f \sim g$ provided that there exist constants $c$ and $C$ such that $c g(r) \leq f(r) \leq C g(r)$ for all $r\in \N_0$.  We can then relate the summability criterion above to volume growth through the following lemma whose proof is immediate.

\begin{lemma}\label{l:antitrees} \emph{(Volume growth of antitrees)} Let $\be>0$.  An antitree with
$\ka_{+}(r)\sim r^{\be}$ for $r\in \N_0$ satisfies $V(r)\sim  r^{\be+1}$.  Furthermore, if $\be> 2$ (respectively, $\be \leq2$), then
\begin{align*}
\sum_{r=0}^\infty\frac{V(r)}{\partial  B(r)}<\infty\quad \bigg(\mbox{respectively, } \sum_{r=0}^\infty\frac{V(r)}{\partial  B(r)}=\infty \bigg).
\end{align*}
\end{lemma}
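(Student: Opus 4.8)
The plan is to verify the two claims in Lemma~\ref{l:antitrees} by elementary estimates, since for an antitree everything is governed by the sphere sizes $\av{S_r}$, which for $\ka_+(r)\sim r^\be$ satisfy $\av{S_{r+1}} = \ka_+(r) \sim r^\be$, hence $\av{S_r}\sim r^\be$ for $r\geq 1$ (with $\av{S_0}=1$).

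\begin{proof}
First I would establish the volume growth. Since the graph is an antitree with $m\equiv 1$, we have $V(r) = \av{B_r} = \sum_{i=0}^r \av{S_i}$. From $\ka_+(i)\sim i^\be$ and the defining relation $\ka_+(i) = \av{S_{i+1}}$ we get $\av{S_i} \sim i^\be$ for $i\geq 1$. Summing, $\sum_{i=1}^r i^\be \sim r^{\be+1}$ (comparing the sum to the integral $\int_1^r t^\be\,dt$, valid for $\be>0$), so that $V(r)\sim r^{\be+1}$, which is the first assertion.

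Next I would analyze the series. Using $\partial B(r) = \ka_+(r)\,m(S_r) = \av{S_r}\av{S_{r+1}}$ (here $\ka_+(r)m(S_r)=\av{S_{r+1}}\av{S_r}$ since $m\equiv 1$ and $\ka_+(r)=\av{S_{r+1}}$), we get for $r\geq 1$
\[
\frac{V(r)}{\partial B(r)} = \frac{\av{B_r}}{\av{S_r}\av{S_{r+1}}} \sim \frac{r^{\be+1}}{r^\be \cdot r^\be} = \frac{1}{r^{\be-1}}.
\]
Hence $\sum_{r=0}^\infty \frac{V(r)}{\partial B(r)}$ converges if and only if $\sum_r r^{1-\be}$ converges, i.e.\ if and only if $1-\be < -1$, which is $\be > 2$. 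This gives the dichotomy: convergence for $\be>2$ and divergence for $\be\leq 2$ (in the borderline case $\be=2$ the terms behave like $1/r$, so the series diverges, consistent with the ``respectively'' clause).
\end{proof}

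There is no serious obstacle here; the lemma is genuinely immediate once one notes the two identities $\av{S_{r+1}}=\ka_+(r)$ and $\partial B(r)=\av{S_r}\av{S_{r+1}}$ specific to antitrees. The only point requiring a word of care is the boundary case $\be=2$, where one should note that the harmonic-type series $\sum 1/r$ still diverges, so it falls into the ``respectively'' alternative as stated; everything else is comparison of sums with integrals of powers.
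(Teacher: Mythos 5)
Your proof is correct and follows exactly the route the paper intends (the paper calls the proof ``immediate'' after displaying $\sum_r \av{B_r}/(\av{S_r}\av{S_{r+1}})$): use $\av{S_{r+1}}=\ka_+(r)\sim r^{\be}$ and $\partial B(r)=\ka_+(r)\av{S_r}=\av{S_r}\av{S_{r+1}}$ to get $V(r)\sim r^{\be+1}$ and $V(r)/\partial B(r)\sim r^{1-\be}$, then compare with $\sum_r r^{1-\be}$. Your care with the shift of index, the value at $r=0,1$, and the borderline case $\be=2$ is exactly the right level of detail.
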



Combining this lemma with Theorem~\ref{t:stochastic} immediately gives the following  corollary.

\begin{corollary}\emph{(Polynomial growth and stochastic incompleteness)} Let $\be>0$. An antitree with  $\ka_{+}(r)\sim r^{\be}$  satisfies  $V(r)\sim r^{\be+1}$ and is stochastically complete if and only if $\be\leq 2$.
\end{corollary}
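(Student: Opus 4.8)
The plan is to combine Lemma~\ref{l:antitrees} with Theorem~\ref{t:stochastic}, so most of the work has already been done; what remains is essentially bookkeeping to check that the hypotheses of Theorem~\ref{t:stochastic} are met and to translate the conclusion into the language of ordinary (not "at infinity") stochastic completeness. First I would observe that an antitree with $m \equiv 1$ and $c \equiv 0$ is an unweighted weakly spherically symmetric graph, and that here $\partial B(r) = \ka_+(r)|S_r| = |S_r||S_{r+1}|$ by the very definition of an antitree together with the formula $\partial B(r) = \s\ka_+(r)\s m(S_r)$ recorded just before Theorem~\ref{t:spectrum}. Since $c \equiv 0$ we have $\p \equiv 0$, so $V_{\p+1}(r) = V_1(r) = |B_r| =: V(r)$, and the summability criterion of Theorem~\ref{t:stochastic} becomes exactly the convergence or divergence of $\sum_{r=0}^\infty \frac{V(r)}{\partial B(r)} = \sum_{r=0}^\infty \frac{|B_r|}{|S_r||S_{r+1}|}$.

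Next I would invoke Lemma~\ref{l:antitrees}: for $\ka_+(r) \sim r^\be$ one gets $V(r) \sim r^{\be+1}$, and the lemma tells us that $\sum_{r=0}^\infty \frac{V(r)}{\partial B(r)}$ diverges precisely when $\be \le 2$ and converges precisely when $\be > 2$. Feeding this into Theorem~\ref{t:stochastic}: divergence ($\be \le 2$) gives stochastic completeness at infinity, while convergence ($\be > 2$) gives stochastic incompleteness at infinity. Finally, since the potential is zero here, stochastic completeness at infinity coincides with the usual notion of stochastic completeness — this is remarked in the paper right after Proposition~\ref{character} ("when the potential is zero, a stochastically complete weighted graph is also stochastically complete at infinity", and conversely trivially). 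Hence the antitree is stochastically complete if and only if $\be \le 2$, and the volume growth statement $V(r) \sim r^{\be+1}$ is just the first assertion of Lemma~\ref{l:antitrees}.

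There is essentially no real obstacle here: the corollary is a direct specialization. The only point that requires a moment's care is the identification $\partial B(r) = |S_r||S_{r+1}|$ for antitrees (as opposed to $|S_{r+1}|$ in the tree case treated just above), which follows from $\ka_+(r) = |S_{r+1}|$ rather than $\ka_-(r) = 1$; getting this right is what makes the relevant sum $\sum \frac{|B_r|}{|S_r||S_{r+1}|}$ and thus what produces the threshold $\be = 2$ rather than some other exponent. I would state the proof in two or three sentences: unwind the definitions to see the summability criterion of Theorem~\ref{t:stochastic} is $\sum_r |B_r|/(|S_r||S_{r+1}|)$, apply Lemma~\ref{l:antitrees} to determine convergence versus divergence in terms of $\be$, and conclude via Theorem~\ref{t:stochastic} together with the coincidence of stochastic completeness and stochastic completeness at infinity when $c \equiv 0$.
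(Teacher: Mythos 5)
Your proposal is correct and follows exactly the paper's route: the corollary is stated there as an immediate consequence of combining Lemma~\ref{l:antitrees} with Theorem~\ref{t:stochastic}, and your unwinding of $\partial B(r)=|S_r||S_{r+1}|$, $V_{\p+1}=V_1$ for $c\equiv 0$, and the identification of stochastic completeness at infinity with ordinary stochastic completeness when the potential vanishes is precisely the bookkeeping the paper leaves implicit.
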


This phenomenon was already observed in \cite{Woj10} and it gives an even sharper contrast to Grigor{\cprime}yan's threshold for manifolds as the volume growth does not even have to be exponential for the antitree to become stochastically incomplete.  Furthermore, in \cite[Theorem 1.4]{GHM}, it is shown that any graph whose volume growth is less than cubic is stochastically complete.  Thus, our examples are, in some sense, the stochastically incomplete graphs with the  smallest volume growth.

It should also be mentioned that it was recently shown by Huang \cite{Hu},  using ideas found in \cite{BB10}, that the condition $\sum_{r=0}^\infty\frac{V(r)}{\partial  B(r)}= \infty$ which implies stochastic completeness of weakly spherically symmetric graphs does not imply stochastic completeness for general graphs.

\bigskip
Let us now turn to a discussion of the spectral consequences. Combining the volume growth of antitrees, Lemma~\ref{l:antitrees},  with Theorem~\ref{t:spectrum} gives the following immediate corollary.

\begin{corollary}\label{c:polynom_spectrum}\emph{(Polynomial growth and positive bottom of the spectrum)} Let $\be>2$. An antitree with  $\ka_{+}(r)\sim r^{\be}$  satisfies  $V(r)\sim r^{\be+1}$ with
\begin{align*}
\lm_0(\Lp)>0\quad\mbox{and}\quad\si(\Lp)=\sd(\Lp).
\end{align*}
\end{corollary}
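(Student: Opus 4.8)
The plan is to obtain the statement by directly combining Lemma~\ref{l:antitrees} with Theorem~\ref{t:spectrum}; no genuinely new argument is required. First I would record the set-up: since we are in the unweighted case with $m\equiv 1$ and $c\equiv 0$, an antitree is in particular a weakly spherically symmetric graph with root $x_0$, so Theorem~\ref{t:spectrum} applies to it, and by the definitions of Section~\ref{s:Setup} the associated operator $L$ is precisely $\Lp$. In the notation of the present section we have $\partial B(r)=\ka_+(r)\,|S_r|=|S_r|\,|S_{r+1}|$ (using $\ka_+(r)=|S_{r+1}|$ for an antitree) and $V(r)=V_1(r)=|B_r|$.

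Next I would invoke Lemma~\ref{l:antitrees} with the given exponent: from $\ka_+(r)\sim r^\be$ and $\be>2$ it yields both $V(r)\sim r^{\be+1}$ and the convergence of
\[
a:=\sum_{r=0}^\infty \frac{V_1(r)}{\partial B(r)}=\sum_{r=0}^\infty \frac{|B_r|}{|S_r|\,|S_{r+1}|}<\infty .
\]
Feeding $a<\infty$ into Theorem~\ref{t:spectrum} then gives at once $\lm_0(\Lp)\geq \tfrac{1}{a}>0$ together with $\si(\Lp)=\sd(\Lp)$, which is exactly the assertion of the corollary.

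Since all the substance lives in the two results being quoted --- the elementary volume estimate of Lemma~\ref{l:antitrees} and the super-solution construction behind Theorem~\ref{t:spectrum} (Lemma~\ref{l:existence_positive_solutions} together with the Allegretto--Piepenbrink characterization, Proposition~\ref{positive}, plus the convergence $\lm_0(L_i^{(D)})\to\infty$ handling the essential-spectrum part) --- there is no real obstacle here. The only points one should check, and both are immediate from the definitions, are that under the antitree hypotheses the operator $L$ really is $\Lp$, and that the volume $V(r)$ used in this section is the same quantity as the $V_1(r)$ appearing in the statement of Theorem~\ref{t:spectrum}.
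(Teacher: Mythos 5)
Your proposal is correct and follows exactly the route the paper intends: the corollary is stated there as an immediate consequence of combining Lemma~\ref{l:antitrees} (which for $\be>2$ gives $V(r)\sim r^{\be+1}$ and convergence of $\sum_r V(r)/\partial B(r)$) with Theorem~\ref{t:spectrum}. Your additional checks --- that an antitree with $m\equiv 1$, $c\equiv 0$ is weakly spherically symmetric with $L=\Lp$ and that $V(r)=V_1(r)$ --- are exactly the (trivial) verifications needed, so nothing is missing.
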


This corollary shows that, for $\Lp$, there are no direct analogues to Brook's theorem which states, in particular, that subexponential volume growth of manifolds implies that the bottom of the essential spectrum is zero. On the other hand, there is a result by Fujiwara \cite{Fuj96a} (which was later generalized in \cite{Hi03}) for the normalized Laplacian $\Ln$ acting on $\ell^2(V, \deg)$, which states that
\begin{equation*}\label{volume}
\lmess(\Ln) \leq  1 - \frac{2 e^{\frac{\mu}{2}}}{1 + e^\mu}.
\end{equation*}
Here,  the exponential volume growth is given by $\mu = \limsup_{r \to \infty} \frac{1}{r}\log\oh V_{1}(r)$ where $\oh V_{1}\ab{r}= \sum_{x\in B_r}\deg(x)$.  Therefore, if a graph has subexponential volume growth with respect to the measure $\deg$, it follows that $\lm_0(\Ln)\leq\lmess(\Ln)=0$.

\bigskip
Let us also discuss  how our Theorem~\ref{t:spectrum} complements some of the results found in \cite{Kel10,KelLen10}.  There it is shown that, for graphs with positive Cheeger constant at infinity, $\al_\infty > 0$, rapid branching is equivalent to discreteness of the spectrum of the Laplacian.   The constant $\al_\infty$ is defined as the limit over the net of finite sets $K$ of the quantities
\[ \al_{K}=\inf \frac{\partial W}{m(W)} \]
where $\partial  W$ is the number of edges leaving $W$ and the infimum is taken over all finite sets $W\subseteq V\setminus K$, see \cite{Fuj96b,Kel10,KelLen10}.  Now, subexponential volume growth, that is, $\mu=0$ implies $\al_{\infty}=0$ by $1-\sqrt{1-\al_{\infty}^{2}}\leq\lmess(\Ln)$, shown in \cite{Fuj96b}, combined with the estimate for $\lmess(\Ln)$ given above.
Corollary~\ref{c:polynom_spectrum} gives  examples of  graphs with $\mu=0$ and thus $\al_{\infty}=0$ but for which $\Lp$ has no essential spectrum.

\bigskip
Another interesting consequence of Theorem~\ref{t:HK_comparison} and Theorem~\ref{t:spectral_comparison} for $\Lp$ is the following:
\begin{corollary}\label{c:edges} Let $\Lp_G$ and $\Lp_{G'}$ be the graph Laplacians of two unweighted weakly spherically symmetric graphs which have the same curvature growth. Then, $\lm_0(\Lp_G)=\lm_0(\Lp_{G'})$.
\end{corollary}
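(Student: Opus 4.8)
The plan is to read the result off from the spectral comparison theorem, Theorem~\ref{t:spectral_comparison}, applied in both directions. First I would make the hypothesis precise: since $G$ and $G'$ are unweighted weakly spherically symmetric graphs with $m\equiv 1$, their curvatures $\ka_\pm^G$ and $\ka_\pm^{G'}$ are themselves spherically symmetric functions of the radius, and "having the same curvature growth" means $\ka_+^G(r)=\ka_+^{G'}(r)$ and $\ka_-^G(r)=\ka_-^{G'}(r)$ for every $r\in\N_0$ (with respect to the chosen roots $x_0$ of $G$ and $o$ of $G'$). The normalization $m(x_0)=m(o)$ required in Definition~\ref{d:C_comp} holds automatically because $m\equiv 1$ on both graphs.

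Next I would note that, because the two curvature functions coincide, $G$ --- regarded as a general weighted graph --- has \emph{stronger} curvature growth than the weakly spherically symmetric graph $G'$: for $x\in S_r(x_0)$ one trivially has $\ka_+^G(x)=\ka_+^{G'}(r)\ge \ka_+^{G'}(r)$ and $\ka_-^G(x)=\ka_-^{G'}(r)\le \ka_-^{G'}(r)$. Theorem~\ref{t:spectral_comparison} then yields $\lm_0(\Lp_G)\ge \lm_0(\Lp_{G'})$. Since the hypothesis is symmetric in $G$ and $G'$, the same argument with the roles interchanged gives $\lm_0(\Lp_{G'})\ge \lm_0(\Lp_G)$. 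Combining the two inequalities gives $\lm_0(\Lp_G)=\lm_0(\Lp_{G'})$, as claimed.

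Alternatively, one can argue directly on the level of heat kernels, which is essentially the same proof unfolded: Theorem~\ref{t:HK_comparison}, applied in both directions, shows that the heat kernels agree at the roots, $p_t(x_0,x_0)=p_t(o,o)$ for all $t\ge 0$, and then the Li-type Proposition~\ref{t:Li} identifies $\lm_0$ as $-\lim_{t\to\infty}t^{-1}\ln p_t(\cdot,\cdot)$ at a fixed vertex, so the two ground state energies coincide. There is no real obstacle in either route; the only point that needs a second's thought is the observation that "same curvature growth" simultaneously fulfils the \emph{stronger} and the \emph{weaker} hypotheses of the comparison theorems, so that the two one-sided estimates pinch $\lm_0$ to a single value.
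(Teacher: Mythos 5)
Your proposal is correct and is essentially the paper's own argument: the paper derives Corollary~\ref{c:edges} precisely as a consequence of Theorems~\ref{t:HK_comparison} and \ref{t:spectral_comparison}, i.e., by noting that equal curvature growth makes each graph simultaneously of stronger and weaker curvature growth than the other, so the two one-sided spectral comparisons pinch $\lm_0$. Your remark that the normalization $m(x_0)=m(o)$ is automatic since $m\equiv 1$ is the only extra detail worth recording, and you handled it correctly.
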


This means, in particular, that the Laplacians $\Lp$ on all graphs in Figure~\ref{f:wsp_vs_sp} have the same bottom of the spectrum.  Note that this is not at all true  for the normalized graph Laplacian $\Ln$ as it operates on $\ell^2(V,m) = \ell^2(V,\deg)$ and the presence of edges connecting vertices on the same sphere clearly effects the degree measure.

To illustrate this contrast, note that on a $k$-regular tree $T_k$ the bottom of the spectrum is known to be $\lm_0(\Ln_{{T}_k})=\frac{1}{k}(k - 2 \sqrt{k-1})$ for all $k$. On the other hand, if one connects all vertices in each sphere one obtains a graph $G_k$ such that $\lm_0(\Ln_{G_k})=0$  as shown in \cite[Theorem 6]{Kel10}.   However, from  Corollary \ref{c:edges} above, $\lm_0(\Lp_{{T}_k})=\lm_0(\Lp_{G_k})=k - 2 \sqrt{k-1}$ which is also new compared to   \cite[Theorem 6]{Kel10}, where only $\lm_0(\Lp_{G_k})\leq k$ is shown.  In particular, we have another example where the ground state energies of $\Lp$ and $\Ln$ differ.

\bigskip

\appendix\section{Reducing subspaces and commuting operators}\label{s:Appendix}
We study symmetries of  selfadjoint operators. These symmetries are given in terms of bounded operators commuting with the selfadjoint operator in question. We present a general characterization in Theorem \ref{abstract-characterization}.  With Lemma \ref{Symmetry-Friedrich}, we then turn to the question of how symmetries of a symmetric non-negative operator carry over to its Friedrichs extension. Finally, we specialize to the situation in which the bounded operator is a projection onto a closed subspace. The main result of this appendix, Corollary \ref{main-appendix},
 characterizes when a selfadjoint operator commutes with such a projection.

While these  results are certainly known in one form or another, we have not found all of them  in the literature in the form discussed below. In the main body of the paper they will be applied to Laplacians on graphs. However, they are general enough to be applied to Laplace-Beltrami operators on manifolds as well.

\bigskip

A subspace $U$ of a Hilbert space is said to be \emph{invariant} under the bounded operator $A$ if $A$ maps $U$ into $U$.

\begin{thm} \label{abstract-characterization} Let $L$ be a selfadjoint non-negative  operator on the Hilbert space $\mathcal{H}$  and $A$ a bounded operator on $\mathcal{H}$. Then, the following assertions are equivalent:

\begin{itemize}
\item[(i)]  $D(L)$ is invariant under $A$ and    $L A x= A Lx $ for all $x\in D(L)$.

\item[(ii)] $D(L^{1/2})$  is invariant under $A $ and  $L^{1/2} A x= A L^{1/2}x$ for all $x\in D(L^{1/2})$.

\item[(iii)] $1_{[0,t]} (L)  A = A 1_{[0,t]} (L)$ for all $t\geq 0$.

\item[(iv)]  $e^{- t L} A = A e^{- t L} $ for all $t\geq 0$.

\item[(v)]  $ (L + \alpha)^{-1} A = A (L+ \alpha)^{-1}$  for all $\alpha >0$.

\item[(vi)] $ g(L) A = A g(L)$ for all bounded measurable $g : [0,\infty) \longrightarrow \C$.

\end{itemize}

\end{thm}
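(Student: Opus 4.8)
The plan is to funnel all six conditions through a single bounded operator. I would fix the bounded, injective, non-negative operator $T:=(L+1)^{-1}$, with $\|T\|\le1$, whose spectral measure $E_T$ satisfies $E_T(\{0\})=0$ by injectivity, and for which $\ran T=D(L)$ and $\ran T^{1/2}=D(L^{1/2})$ (the latter because $D((L+1)^{1/2})$ is the form domain of $L$, hence equals $D(L^{1/2})$). The aim is to show that every one of (i)--(vi) is equivalent to the bare identity $AT=TA$. The only non-elementary inputs are two standard facts: \emph{(a)} a bounded operator commuting with a bounded selfadjoint operator commutes with every bounded Borel function of it (polynomials first, then $C([0,1])$ by Weierstrass and norm-continuity of the functional calculus, then bounded Borel functions by a functional monotone-class argument using bounded convergence of spectral integrals); and \emph{(b)} the Stone--Weierstrass theorem, which I will use to reconstruct the functional calculus of $T$ from a separating family of functions of $T$.

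\emph{Downward from $AT=TA$.} By (a), $A$ commutes with $g(T)$ for all bounded Borel $g$ on $[0,1]$. For an arbitrary Borel function $\psi$ on $(0,1]$, truncating by $\psi_n:=\psi\,1_{\{|\psi|\le n\}}$ and using that $x\in D(\psi(T))$ iff $\sup_n\|\psi_n(T)x\|<\infty$, together with $\psi_n(T)Ax=A\psi_n(T)x$, shows that $A$ leaves $D(\psi(T))$ invariant and commutes with $\psi(T)$ there. Applying this to $\psi(s)=s^{-1}-1$ (so $\psi(T)=L$, legitimate since $E_T(\{0\})=0$) gives (i); to $\psi(s)=\sqrt{s^{-1}-1}$ (so $\psi(T)=L^{1/2}$) gives (ii); and, via the composition rule $g(L)=(g\circ\psi)(T)$ with $g$ bounded, the choices $g=1_{[0,t]}$, $g(\lambda)=e^{-t\lambda}$, $g(\lambda)=(\lambda+\alpha)^{-1}$, and general bounded Borel $g$ give (iii)--(vi) respectively.

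\emph{Upward: each of (i)--(v) forces $AT=TA$.} For (vi) this is immediate. For (i): given $y\in\mathcal H$, set $x=Ty\in D(L)$; then $Ax\in D(L)$ and $LAx=ALx$, so $(L+1)Ax=A(L+1)x=Ay$, i.e.\ $ATy=Ax=TAy$. For (ii): the analogous domain chase (using $D(L)\subseteq D(L^{1/2})$ and that $L^{1/2}Ty\in D(L^{1/2})$) shows $Ax\in D(L)$ with $LAx=ALx$ again, and the argument for (i) applies; alternatively, (ii) first yields commutation with $(L^{1/2}+1)^{-1}$, of which $T$ is a bounded Borel function. For (iii): commuting with $1_{[0,t]}(L)$ for all $t\ge0$ is the same as commuting with $1_{[s,1]}(T)$ for all $s\in(0,1]$, and integrating the identity $\int_0^1 1_{[s,1]}(\mu)\,ds=\mu$ against $E_T$ gives $T=\int_0^1 1_{[s,1]}(T)\,ds$, so $AT=TA$. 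For (iv) and (v): the families $s\mapsto e^{-t(s^{-1}-1)}$ $(t\ge0)$ and $s\mapsto((s^{-1}-1)+\alpha)^{-1}$ $(\alpha>0)$ extend continuously by $0$ at $s=0$, vanish nowhere together with the constants, and separate the points of $[0,1]$; hence by Stone--Weierstrass they generate a dense unital subalgebra of $C([0,1])$, so commuting with $e^{-tL}$ (resp.\ with all $(L+\alpha)^{-1}$) forces commuting with $f(T)$ for every $f\in C([0,1])$, in particular with $T$.

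The one point that genuinely needs attention — though it is routine, not deep — is the behaviour at the spectral value $0$ of $T$, i.e.\ ``at infinity'' for $L$, where the substitution $\lambda=s^{-1}-1$ degenerates: this is exactly what $E_T(\{0\})=0$ is for, and it is also the reason $e^{-tL}$ and the resolvents $(L+\alpha)^{-1}$ — which correspond to functions extending continuously across $s=0$ — are the right families on which to run the Stone--Weierstrass step, rather than $L$ itself. Everything else reduces to standard bounded functional calculus together with dominated and monotone convergence for spectral integrals.
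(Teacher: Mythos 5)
Your proof is correct, but it is organized quite differently from the paper's. The paper argues along the cycle (iii)$\Rightarrow$(iv)$\Rightarrow$(v)$\Rightarrow$(vi)$\Rightarrow$(iii) -- using an approximation for (iii)$\Rightarrow$(iv), the Laplace-transform identity $(L+\alpha)^{-1}=\int_0^\infty e^{-t\alpha}e^{-tL}\,dt$ for (iv)$\Rightarrow$(v), and a Stone--Weierstrass plus bounded-pointwise-convergence (monotone class) argument for (v)$\Rightarrow$(vi) -- and then treats the unbounded-operator statements separately: (ii)$\Rightarrow$(i) via $L=L^{1/2}L^{1/2}$, (i)$\Rightarrow$(v) via injectivity of $L+\alpha$, and (vi)$\Rightarrow$(ii) via the spectral truncations $1_{[0,n]}(L)$ and closedness of $L^{1/2}$. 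You instead funnel all six statements through the single identity $AT=TA$ with $T=(L+1)^{-1}$, using the bounded Borel calculus of $T$, the composition rule $g(L)=(g\circ\psi)(T)$ with $\psi(s)=s^{-1}-1$ (legitimate because $E_T(\{0\})=0$), and a truncation criterion for $D(\psi(T))$ that handles $L$, $L^{1/2}$ and any $g(L)$ uniformly -- this is essentially the paper's (vi)$\Rightarrow$(ii) argument promoted to arbitrary Borel functions. What your route buys is symmetry and economy: (iv) and (v) are treated on the same footing by one Stone--Weierstrass step for continuous functions on $[0,1]$ (the paper instead deduces (v) from (iv) by the Laplace transform), your (i)$\Rightarrow$ and (ii)$\Rightarrow$ steps coincide with the paper's resolvent argument, and all domain issues are concentrated in one lemma about $D(\psi(T))$. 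The price is reliance on the composition rule for the Borel calculus of an unbounded operator and on the spectral-measure fact $E_T(\{0\})=0$, which the paper's more pedestrian chain avoids. Two cosmetic points: your integral representation $T=\int_0^1 1_{[s,1]}(T)\,ds$ in the (iii) step should be read weakly (pair with vectors and apply Fubini for the spectral measure), and in the Stone--Weierstrass step the relevant hypothesis is that the algebra generated together with the constants is unital and separates points -- the generating functions themselves do vanish at $s=0$, which is harmless for the unital version of the theorem; your wording there is slightly loose but the conclusion is right.
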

\begin{proof} This is essentially  standard. We sketch a proof for the convenience of the reader.
We first show that (iii), (iv), (v) and (vi) are all equivalent:

\smallskip

(iii)$\Longrightarrow$(iv): This follows by a simple approximation argument.

\smallskip

(iv)$\Longrightarrow$(v): This follows immediately from  $(L + \alpha)^{-1} = \int_0^\infty e^{- t \alpha} e^{- t L } dt$ (which, in turn, is a direct consequence of the spectral calculus).

\smallskip

(v)$\Longrightarrow$(vi):  The assumption (v) together with a  Stone/Weierstrass-type argument shows that $ g(L) A = A g(L)$ for all continuous $g : [0,\infty)\longrightarrow \C$ with $g(x) \to 0$ for $x\to \infty$. Now, it is not hard to see that the set
$$\{ f: [0,\infty) \longrightarrow \C \ | \ \mbox{$f$ measurable and bounded with $f(L) A = A f(L)$}\}$$
is closed under pointwise convergence of uniformly bounded sequences. This gives the desired statement (vi).

\smallskip

(vi)$\Longrightarrow$(iii): This is obvious.

\smallskip

We now show (ii)$\Longrightarrow$(i)$\Longrightarrow$(v) and  (vi)$\Longrightarrow$(ii).

\smallskip

(ii)$\Longrightarrow$(i): This  is clear as $L = L^{1/2} L^{1/2}$.

\smallskip

(i)$\Longrightarrow$(v):   Obviously, (i) implies $A (L + \alpha) x = (L  + \alpha) A x $ for all $\alpha\in \R$ and $x\in D(L)$.
As $(L+\alpha )$ is injective   for $\alpha >0$ we infer for all such $\alpha $ that
$$(L + \alpha)^{-1} A  = A  (L+ \alpha)^{-1}.$$

(vi)$\Longrightarrow$(ii): For every natural number $n$ the operator $L^{1/2} 1_{[0,n]} (L)   = (id^{1/2} 1_{[0,n]}) (L)$ is a bounded operator commuting with $A$ by (vi).  Let $x\in D(L^{1/2})$ be given and set  $x_n :=1_{[0,n]} (L) x$.  Then, $x_n$ belongs to  $D(L^{1/2})$. Moreover, as $ 1_{[0,n]} (L)$ is a projection, we obtain  by (vi)  that
$$ A  x_n = A  1_{[0,n]} (L)   x =   1_{[0,n]} (L) A 1_{[0,n]} (L) x =1_{[0,n]} (L) A x_n. $$
In particular, $A x_n$ belongs to $D(L^{1/2})$ as well.
This gives, by (vi) again, that
$$ L^{1/2}  A  x_n = L^{1/2} 1_{[0,n]} (L)  A  x_n = A L^{1/2} 1_{[0,n]}(L) x.$$
As $x$ belongs to  $D(L^{1/2})$, we infer that $ L^{1/2} 1_{[0,n]} (L) x$ converges to $L^{1/2} x$. Moreover, $A x_n$ obviously converges to $A x$. As $L^{1/2}$ is closed, we obtain that $A x$ belongs to $D(L^{1/2})$ as well and
  $L^{1/2}  A x = L^{1/2} A x$ holds.
  \end{proof}

\begin{remark}
(a) The method to prove (v)$\Longrightarrow$(vi) can be strengthened as follows:  Let $L$ be a selfadjoint operator with spectrum $\Sigma$.  Let $\mathcal{B}(\Sigma)$ be the algebra of all bounded measurable functions on $\Sigma.$ A sequence $(f_n)$ in $\mathcal{B}(\Sigma)$ is said to converge to $f\in \mathcal{B}(\Sigma)$ in the sense of $(\clubsuit)$ if the $(f_n)$ are uniformly bounded and converge pointwise to $f$. Let  $F$  be a subset of $\mathcal{B}$ such that  $f(L) A  = A  f(L)$ holds for all $f\in F$. If  the smallest subalgebra of $\mathcal{B}$  which contains $F$ and is closed under convergence with respect to $(\clubsuit)$ is $\mathcal{B}$, then  $g(L) A =  A g(L)$ for all $g\in \mathcal{B}$.

(b)  If $L$ is  an arbitrary selfadjoint operator then the equivalence of (i), (iii) and (vi) is still true and the semigroup in (iv) can be replaced by the unitary group and the resolvents in (v) can be replaced by  resolvents for $\alpha \in \C\setminus \R$ (as can easily be seen using (a) of this remark).
\end{remark}

\bigskip

\begin{definition} Let $L$ be a selfadjoint non-negative operator on a Hilbert space $\mathcal{H}$ and $A $ a bounded operator on $\mathcal{H}$. Then, $A$ is said to \emph{commute} with $L$ if one of the equivalent statements of the theorem holds.
\end{definition}

\begin{corollary} Let $L$ be a selfadjoint non-negative operator on a Hilbert space $\mathcal{H}$ and $A $ a bounded operator on $\mathcal{H}$. Then, $A$ commutes with $L$ if and only if its adjoint  $A^\ast$ commutes with $L$.
\end{corollary}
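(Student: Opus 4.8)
The plan is to leverage the equivalent characterizations collected in Theorem~\ref{abstract-characterization} together with the elementary fact that functions of a selfadjoint operator are again selfadjoint, so that any commutation relation of the form $B A = A B$ with $B$ selfadjoint is preserved under passing to adjoints. Concretely, I would work with condition (iv), commutation with the heat semigroup, although condition (v) (the resolvents $(L+\alpha)^{-1}$, $\alpha>0$) or condition (vi) (with the observation $\overline{g}(L) = g(L)^\ast$) would serve equally well. The point is that, because $L$ is selfadjoint and non-negative, each operator $e^{-tL}$ for $t\geq 0$ is a bounded \emph{selfadjoint} operator, as one sees from the spectral calculus $e^{-tL} = \int_{[0,\infty)} e^{-t\lambda}\,dE_\lambda$ and the fact that $e^{-t\lambda}$ is real-valued.

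First I would assume that $A$ commutes with $L$. By Theorem~\ref{abstract-characterization} this is equivalent to $e^{-tL} A = A e^{-tL}$ for all $t\geq 0$. Taking Hilbert space adjoints of both sides, and using that $(BC)^\ast = C^\ast B^\ast$ for bounded operators together with $(e^{-tL})^\ast = e^{-tL}$, I obtain $A^\ast e^{-tL} = e^{-tL} A^\ast$ for every $t\geq 0$. This is exactly condition (iv) of Theorem~\ref{abstract-characterization} for the bounded operator $A^\ast$, so $A^\ast$ commutes with $L$. For the converse I would simply apply the implication just established with $A^\ast$ in place of $A$: if $A^\ast$ commutes with $L$, then $(A^\ast)^\ast$ commutes with $L$, and $(A^\ast)^\ast = A$ since $A$ is bounded. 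This yields the stated equivalence.

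There is no genuine obstacle here; the argument is short. The only points requiring (minor) care are that one must invoke selfadjointness of $L$ precisely to guarantee that the functional-calculus operators used are selfadjoint — this is why one should use the semigroup for real $t$ or the resolvents at positive $\alpha$, rather than, say, the unitary group $e^{itL}$ whose adjoint would introduce a sign in the parameter — and that taking adjoints reverses the order of a product, which is exactly what makes the commutation relation symmetric in $A$ and $A^\ast$.
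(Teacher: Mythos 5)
Your argument is correct and is essentially the paper's own proof: the paper takes adjoints in condition (iii) of Theorem~\ref{abstract-characterization} (the spectral projections $1_{[0,t]}(L)$, which are selfadjoint), while you take adjoints in the equivalent condition (iv) (the semigroup $e^{-tL}$, likewise selfadjoint), which is an immaterial difference.
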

\begin{proof} Take adjoints in (iii) of the previous theorem.
\end{proof}

A simple situation in which the previous theorem can be applied is given next.

\begin{proposition} \label{useful}  Let $L$ be a selfadjoint non-negative  operator on the Hilbert space  $\mathcal{H}$ and let $A$ be a bounded operator on $\mathcal{H}$. Let, for each natural number  $n$,  a closed  subspace  $\mathcal{H}_n$ of $\mathcal{H}$ be given with $A \mathcal{H}_n \subset \mathcal{H}_n$ and $\overline{\cup_{n} \mathcal{H}_n} = \mathcal{H}$.  If, for each $n$, there exists  a selfadjoint non-negative  operator $L_n$ from $\mathcal{H}_n$ to $\mathcal{H}_n$ with $ A L_n = L_n A$ and
$$(L_{n+ k} + \alpha)^{-1} x  \to (L  + \alpha)^{-1} x, \;k\to \infty,$$
for all natural numbers $n$,  $x\in \mathcal{H}_n$ and $\alpha >0$, then $A L = LA$ holds. A corresponding statement holds with resolvents  replaced by the semigroup.
\end{proposition}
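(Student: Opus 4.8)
The plan is to reduce everything to the resolvent characterization of commutation, namely statement (v) of Theorem~\ref{abstract-characterization}, and then to push the commutation relation through the resolvent approximation. First I would apply Theorem~\ref{abstract-characterization} to each operator $L_n$ on the Hilbert space $\mathcal{H}_n$: since $A\mathcal{H}_n\subset\mathcal{H}_n$, the restriction of $A$ to $\mathcal{H}_n$ is a bounded operator on $\mathcal{H}_n$, and the hypothesis $AL_n=L_nA$ together with the equivalence (i)$\Longleftrightarrow$(v) there yields $(L_n+\alpha)^{-1}A=A(L_n+\alpha)^{-1}$ on $\mathcal{H}_n$ for every $\alpha>0$.

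Next, fix $\alpha>0$, a natural number $n$, and $x\in\mathcal{H}_n$. Then $Ax\in\mathcal{H}_n$ as well, and both $x$ and $Ax$ lie in every $\mathcal{H}_{n+k}$ (the subspaces being nested, as is implicit in the assumption that $(L_{n+k}+\alpha)^{-1}$ is applied to elements of $\mathcal{H}_n$). Applying the resolvent convergence hypothesis to $Ax$, then the commutation just established at level $n+k$, then the continuity of $A$, and finally the resolvent convergence hypothesis to $x$, one gets
\[
(L+\alpha)^{-1}Ax=\lim_{k\to\infty}(L_{n+k}+\alpha)^{-1}Ax=\lim_{k\to\infty}A(L_{n+k}+\alpha)^{-1}x=A(L+\alpha)^{-1}x .
\]
Hence $(L+\alpha)^{-1}A$ and $A(L+\alpha)^{-1}$ agree on $\bigcup_n\mathcal{H}_n$, which is dense in $\mathcal{H}$; since both are bounded, they agree on all of $\mathcal{H}$. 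Thus $(L+\alpha)^{-1}A=A(L+\alpha)^{-1}$ for every $\alpha>0$, and Theorem~\ref{abstract-characterization} gives $AL=LA$.

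The statement with the semigroup in place of the resolvents follows by the identical argument, now invoking the equivalence (i)$\Longleftrightarrow$(iv) of Theorem~\ref{abstract-characterization} and replacing $(L_{n+k}+\alpha)^{-1}$ by $e^{-tL_{n+k}}$ throughout (again using that $A$ commutes with $e^{-tL_n}$ on $\mathcal{H}_n$, which is Theorem~\ref{abstract-characterization} applied to $L_n$). The only point requiring care — and the closest thing to an obstacle — is the bookkeeping around the subspaces $\mathcal{H}_n$: one must check that $A$ maps $\mathcal{H}_n$ into itself so that Theorem~\ref{abstract-characterization} applies there, and that $x$ and $Ax$ sit in the spaces $\mathcal{H}_{n+k}$ on which the approximating resolvents act; both are immediate from the hypotheses once the convergence assumption is read as implicitly requiring $\mathcal{H}_n\subseteq\mathcal{H}_{n+k}$. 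No compactness or additional structure is needed — it is purely density plus continuity of $A$.
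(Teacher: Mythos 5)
Your proof is correct and is essentially the paper's own argument: commutation of $A$ with the resolvents of the $L_n$ (via Theorem~\ref{abstract-characterization} at each level), passage to the limit on the dense union $\cup_n\mathcal{H}_n$ using the resolvent convergence and the boundedness of $A$, and then Theorem~\ref{abstract-characterization} again to conclude $AL=LA$. The paper compresses the limiting step into the phrase ``by assumption,'' so your write-up simply makes explicit the same steps, including the harmless bookkeeping about $\mathcal{H}_n\subseteq\mathcal{H}_{n+k}$.
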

\begin{proof}  By assumption we have $A (L + \alpha)^{-1}x = (L + \alpha)^{-1} A x$ for all $x$ from the dense set   $\cup_{n} \mathcal{H}_n$. By boundedness of the respective operators we infer $A (L + \alpha)^{-1}  = (L + \alpha)^{-1} A$  and the statement follows from the previous theorem.
\end{proof}

\bigskip

The previous theorem deals with symmetries of a selfadjoint operator $L$.  Often, the selfadjoint operator arises as the Friedrichs extension of a symmetric operator.
We next study how symmetries of a symmetric operator carry over to its  Friedrichs extension. Specifically,  we consider  the following situation:

\begin{itemize}
\item[$(*)$] Let $\mathcal{H}$ be a Hilbert space with inner product $\langle\cdot, \cdot\rangle$. Let $L_0$ be a symmetric operator on $\mathcal{H}$ with domain $D_0$. Let $Q_0$ be the associated form, i.e., $Q_0$ is defined on $D_0 \times D_0$ via
    $Q_0 (u,v) :=\langle L_0 u, v\rangle.$
    Assume that $Q_0$ is non-negative, i.e., $Q_0 (u,u)\geq 0$ for all $u\in D_0$. Then, $Q_0$ is closable. Let $Q$ be the closure of $Q_0$, $D(Q)$ the domain of $Q$ and  $L$  the Friedrichs extension of $L_0$, i.e., $L$ is the  selfadjoint operator associated to $Q$.
\end{itemize}

\begin{lemma} \label{Symmetry-Friedrich} Assume $(*)$.  Let $A$ be a bounded operator on $\mathcal{H}$ with $D_0$ invariant under $A$ and $A^\ast$,  $A L_0 x = L_0 A x$ and $A^\ast L_0 x = L_0 A^\ast x$ for all $x\in D_0$. Then, the following assertions are equivalent:

\begin{itemize}

\item[(i)] $D(L)$ is invariant under $A$ and  $A L x  = L A x $ for all $x\in D(L)$.

\item[(ii)] $D(Q)$ is invariant under $A$  and $A^\ast$ and $Q (A x, y) = Q (x, A^\ast y)$ for all $x,y\in D(Q)$.

\item[(iii)] There exists a $C\geq 0$ with  both $Q_0 (A x, Ax) \leq C Q_0 (x,x)$  and $Q_0 (A^\ast x, A^\ast x) \leq C Q_0(x,x) $ for all $x\in D_0$.

\end{itemize}

\end{lemma}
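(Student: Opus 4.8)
The plan is to prove the cycle of implications $(i)\Longrightarrow(ii)\Longrightarrow(iii)\Longrightarrow(i)$, working with the form $Q$ as the bridge between the Friedrichs extension $L$ and the symmetric operator $L_0$, and using the characterization of $D(Q)$ and $D(L)$ standard for Friedrichs extensions. Throughout, recall that $D(Q)$ is the completion of $D_0$ in the form norm $\|u\|_Q^2 = Q_0(u,u)+\langle u,u\rangle$, that $D(L) \subseteq D(Q)$, and that $L$ is characterized by $Q(u,v) = \langle Lu, v\rangle$ for $u\in D(L)$, $v\in D(Q)$.

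For $(i)\Longrightarrow(ii)$: first I would show $D(Q)$ is invariant under $A$. Given $x\in D(Q)$, choose $x_n\in D_0$ with $x_n\to x$ in the form norm; since $A L_0 = L_0 A$ on $D_0$ we get $Q_0(Ax_n, Ax_n) = \langle L_0 A x_n, A x_n\rangle = \langle A L_0 x_n, A x_n\rangle$, and — this is exactly the estimate that reappears as (iii) — one bounds this by $\|A\|\,\|A^\ast\|$ times $Q_0(x_n,x_n)$ after moving $A$ across the inner product to $A^\ast$ and using $A^\ast L_0 = L_0 A^\ast$; alternatively and more cleanly, one uses that $A$ commutes with $L^{1/2}$ (this is the equivalence (i)$\Leftrightarrow$(ii) of Theorem~\ref{abstract-characterization}, applicable since $L$ is selfadjoint non-negative), so that $D(Q) = D(L^{1/2})$ is invariant under $A$ and $\|A u\|_Q$ is controlled by $\|A\|$ on that space. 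The same argument applied to $A^\ast$ (whose commutation with $L$ follows from the Corollary after Theorem~\ref{abstract-characterization}) gives invariance of $D(Q)$ under $A^\ast$. Then for $x\in D(L)$ and $y\in D(Q)$ one has $Q(Ax,y) = \langle LAx,y\rangle = \langle ALx,y\rangle = \langle Lx, A^\ast y\rangle = Q(x, A^\ast y)$, and this extends from $D(L)$ to all $x\in D(Q)$ by density and continuity of $Q$.

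For $(ii)\Longrightarrow(iii)$: take $x\in D_0\subseteq D(Q)$; then $A x\in D(Q)$ and $Q(Ax,Ax) = Q(x, A^\ast A x)$. Estimating $Q(x, A^\ast A x) \le Q(x,x)^{1/2} Q(A^\ast A x, A^\ast A x)^{1/2}$ by Cauchy--Schwarz for the non-negative form $Q$ is circular unless one iterates; the clean route is instead to invoke $(ii)\Rightarrow(i)$ (this is the content we are really proving, so let me restructure: prove $(ii)\Rightarrow(i)$ directly, and then $(i)\Rightarrow(iii)$). For $(ii)\Rightarrow(i)$: if $x\in D(L)$ then for all $y\in D(Q)$, $\langle LAx - ALx, y\rangle$... but we do not yet know $Ax\in D(L)$. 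Use the form characterization: $Ax\in D(L)$ with $LAx = ALx$ iff $Q(Ax,y) = \langle ALx,y\rangle$ for all $y\in D(Q)$; and indeed $Q(Ax,y) = Q(x,A^\ast y) = \langle Lx, A^\ast y\rangle = \langle ALx,y\rangle$ using $x\in D(L)$, $A^\ast y\in D(Q)$. Hence $(i)$ holds. Finally $(i)\Rightarrow(iii)$: as above $A$ commutes with $L^{1/2}$, so for $x\in D_0$, $Q_0(Ax,Ax) = \|L^{1/2}Ax\|^2 = \|AL^{1/2}x\|^2 \le \|A\|^2\|L^{1/2}x\|^2 = \|A\|^2 Q_0(x,x)$, and symmetrically for $A^\ast$; take $C = \max(\|A\|^2,\|A^\ast\|^2)$.

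The remaining and genuinely substantive step is $(iii)\Longrightarrow(ii)$, which closes the cycle. The hard part is upgrading the quadratic-form bound on the dense domain $D_0$ to boundedness of $A$ (and $A^\ast$) as operators on $(D(Q), \|\cdot\|_Q)$. The point is: for $x\in D_0$, the hypothesis gives $\|Ax\|_Q^2 = Q_0(Ax,Ax) + \|Ax\|^2 \le C Q_0(x,x) + \|A\|^2\|x\|^2 \le \max(C,\|A\|^2)\|x\|_Q^2$, so $A$ restricted to $D_0$ is bounded for the form norm; since $D_0$ is dense in $D(Q)$ it extends to a bounded operator $\widetilde A$ on $D(Q)$, and $\widetilde A$ agrees with $A$ on all of $D(Q)$ because both are $\|\cdot\|$-continuous and agree on the $\|\cdot\|$-dense set $D_0$ (form convergence implies norm convergence). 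Thus $D(Q)$ is invariant under $A$, and likewise under $A^\ast$. It then remains to verify $Q(Ax,y) = Q(x,A^\ast y)$ for $x,y\in D(Q)$: this identity holds on $D_0\times D_0$ since $Q_0(Ax,y) = \langle L_0 Ax, y\rangle = \langle A L_0 x, y\rangle = \langle L_0 x, A^\ast y\rangle = Q_0(x, A^\ast y)$, and both sides are continuous in each argument for $\|\cdot\|_Q$ (using the just-established form-boundedness of $A$ and $A^\ast$), so it extends by density to $D(Q)\times D(Q)$. This gives $(ii)$ and completes the proof.
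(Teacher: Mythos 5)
Your final cycle (ii)$\Rightarrow$(i)$\Rightarrow$(iii)$\Rightarrow$(ii) is correct and coincides with the paper's own proof: the same form characterization of $D(L)$ for (ii)$\Rightarrow$(i), the same commutation with $L^{1/2}$ via Theorem~\ref{abstract-characterization} for (i)$\Rightarrow$(iii), and the same form-norm boundedness of $A$, $A^\ast$ on $D_0$ plus the closure of $Q_0$ for (iii)$\Rightarrow$(ii). The initial detour through (i)$\Rightarrow$(ii) and the abandoned direct attempt at (ii)$\Rightarrow$(iii) are redundant but harmless.
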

\begin{proof} (iii)$\Longrightarrow$(ii): By $A L_0 x = L_0 A x$ for all $x\in D_0$ we infer that $Q_0 (Ax, y) = Q_0 (x, A^\ast  y)$ for all $x,y\in D_0$.
 As $Q$ is the closure of $Q_0$, it now suffices to show that both $(A u_n)$  and $(A^\ast u_n)$ are  a Cauchy sequences with respect to the $Q$-norm, whenever $(u_n)$ is a Cauchy sequence with respect to the $Q$-norm in $D_0$. This follows directly from (iii).

 \smallskip

(ii)$\Longrightarrow$(i):  Let $x \in D(L)$ be given. Then, $x$ belongs to $D(Q)$  and, by (ii),  $Ax $ belongs to $D(Q)$ as well. Thus, we can calculate for all $y\in D(Q)$
$$Q (A x, y) = Q (x, A^\ast y) = \langle L x, A^\ast  y\rangle = \langle A L x, y\rangle.$$
This implies that $A x\in D(L)$ and $L A x = A L x$. Hence, we obtain (i).

\smallskip

(i)$\Longrightarrow$(iii): From Theorem \ref{abstract-characterization} and (i) we infer that $L^{1/2} A x = A L^{1/2}x$ for all $x\in D(L^{1/2})$.  Now,  for $x\in D_0$   it holds that
$$Q_0 (x,x) = \langle L_0 x, x\rangle = \langle L^{1/2} x, L^{1/2} x\rangle = \|L^{1/2} x\|^2.$$
By  $A x \in D_0$ for $x\in D_0$   a direct calculation gives
$$ Q_0 (A x, A x)  = \| L^{1/2} A x\|^2   =   \| A L^{1/2} x\|^2 \leq  \|A\|^2 \|L^{1/2} x\|^2 =  \|A\|^2 Q_0 (x,x).$$
A similar argument shows $Q_0 (A^\ast x, A^\ast x) \leq \|A^\ast\|^2 Q_0 (x,x)$.
This finishes the proof.
\end{proof}

We now turn to the special situation that $A$ is the projection onto a closed subspace. In this case, some further strengthening of the above result is possible. We first provide an appropriate definition.

\begin{definition} Let $\mathcal{H}$ be a Hilbert space and $S$ a symmetric operator on $\mathcal{H}$ with domain $D(S)$. A closed subspace $ U$ of $\mathcal{H}$  with associated orthogonal projection $P$ is called a \emph{reducing subspace} for $S$ if $D(S)$ is invariant under $P$ and $ S P  x = P S P x$ for all $x\in D(S)$.
\end{definition}

The previous definition is just a commutation condition in the form discussed above as shown in the next lemma.

\begin{lemma} Let $S$ be a symmetric operator on the Hilbert space $\mathcal{H}$ and $P$ be the orthogonal projection onto a closed subspace $U$ of $\mathcal{H}$. Then, the following assertions are equivalent:
\begin{itemize}
\item[(i)] $U$ is a reducing subspace for $S$.

\item[(ii)] $D(S)$ is invariant under $P$ and  $S P x = P S x $ holds for all $x\in D(S)$.
\end{itemize}
\end{lemma}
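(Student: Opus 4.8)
The plan is to treat the two implications separately; the implication (ii)$\Longrightarrow$(i) is immediate and (i)$\Longrightarrow$(ii) carries the content. Assume (ii). Then $D(S)$ is invariant under $P$, and for $x\in D(S)$ the element $Px$ again lies in $D(S)$ and satisfies $P(Px)=Px$, so applying the identity in (ii) to $Px$ in place of $x$ gives $SPx = S(P(Px)) = PS(Px) = PSPx$. This is exactly the defining relation of a reducing subspace, so (i) holds.

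Now assume (i). The invariance of $D(S)$ under $P$ is part of (i), so it remains to prove $SPx = PSx$ for all $x\in D(S)$. I would first record the auxiliary claim that $S$ maps $D(S)\cap U^\perp$ into $U^\perp$. Granting this, write $x = Px + (I-P)x$: both summands lie in $D(S)$, the second lies in $U^\perp$ since $P(I-P)x = 0$, and $S(Px) = PSPx$ by (i) already lies in $U = \ran P$. Hence
\[ PSx = PS(Px) + PS\big((I-P)x\big) = P(PSPx) + 0 = PSPx = SPx, \]
using $P^2 = P$, the claim, and (i) once more, which is the desired identity.

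To prove the claim, fix $z\in D(S)\cap U^\perp$. For an arbitrary $w\in D(S)$ we have $Pw\in D(S)$ by (i), and symmetry of $S$ together with (i) and the self-adjointness of $P$ gives
\[ \as{Sz, Pw} = \as{z, S(Pw)} = \as{z, PSPw} = \as{Pz, SPw} = 0, \]
since $Pz = 0$. Thus $Sz$ is orthogonal to $P(D(S))$. Since $D(S)$ is dense in $\mathcal{H}$ (symmetric operators being densely defined) and $P$ is bounded, $P(D(S))$ is dense in $U$, and therefore $Sz\perp U$, i.e.\ $Sz\in U^\perp$. This establishes the claim and completes the proof.

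The only genuinely non-formal step is the last one: passing from orthogonality of $Sz$ against $P(D(S))$ to orthogonality against all of $U$. One cannot test $Sz$ against elements of $U\setminus D(S)$ directly, and the remedy is precisely the density of the domain of a symmetric operator together with the continuity of $P$; everything else is bookkeeping with the idempotence and self-adjointness of $P$ and the symmetry relation $\as{Sa,b} = \as{a,Sb}$ on $D(S)$.
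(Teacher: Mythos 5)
Your proof is correct and follows essentially the same route as the paper: the key step in (i)$\Longrightarrow$(ii) is showing that $S$ maps $D(S)\cap U^\perp$ into $U^\perp$ (equivalently $PSz=0$ when $Pz=0$) via the same chain of inner-product identities using symmetry of $S$, the relation $SP=PSP$, and density of $D(S)$, followed by the same decomposition $x=Px+(I-P)x$. Your small detour through density of $P(D(S))$ in $U$ is just a rephrasing of the paper's direct conclusion that $\as{PSy,x}=0$ for all $x$ in the dense set $D(S)$ forces $PSy=0$.
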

\begin{proof} The implication (ii)$\Longrightarrow$(i) is obvious. It remains to show (i)$\Longrightarrow$(ii):  We first show  $ P S y = 0$ for all $y\in D(S)$ with $P y = 0$ (i.e., $y \perp  U$): Choose $x\in D(S)$ arbitrarily. Then, as $Px \in D(S) \subset D(S^\ast)$ we obtain
$$ \langle P S y, x\rangle = \langle Sy, Px\rangle = \langle y, S^\ast  P x\rangle = \langle y, S Px\rangle  = \langle y, P S P x\rangle = \langle Py, S P  x\rangle = 0.$$
As $D(S)$ is dense, we infer $P S y=0$. Let now $x\in D(S)$ be arbitrary. Then, $x = Px + (1-P) x$ and both $Px$ and $(1-P) x$ belong to $D(S)$. Thus, we can calculate
$$ P S x = P  S Px +  P S (1-P) x = PS P x = S P x.$$
This finishes the proof.
\end{proof}

We now come to the main result of the appendix dealing with symmetries of symmetric operators in terms of reducing subspaces.

\begin{corollary} \label{main-appendix}  Assume $(*)$. Let $ U $ be a closed subspace of $\mathcal{H}$ and $A$ the orthogonal projection onto $ U $.  Assume that $D_0$ is invariant under $A$.  Then, the following assertions are equivalent:

\begin{itemize}
\item[(i)] $U$ is a reducing subspace for $ L_0$, i.e., $ L_0 A x = A L_0 x$ for all $x\in D_0$.

\item[(ii)]   $Q_0 (A x, A y) = Q_0 (A x, y) = Q_0 (x, Ay) $ for all $x,y\in D_0$.

\item[(iii)] $D(Q)$ is invariant under $A$   and $Q (A x, A y) = Q (Ax, y) = Q (x, Ay) $ for all $x,y\in D(Q)$.

\item[(iv)] $ U $ is a reducing subspace for $L$.

\item[(v)] $A$ commutes with $ e^{-t L}$ for every $t\geq 0$.

\item[(vi)] $A$ commutes with $(L+\alpha)^{-1}$ for any $\alpha >0$.

\end{itemize}

\end{corollary}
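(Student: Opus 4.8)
The plan is to reduce everything to the two general results already established, namely Theorem~\ref{abstract-characterization} and Lemma~\ref{Symmetry-Friedrich}, exploiting the fact that an orthogonal projection satisfies $A = A^\ast$ and $A^2 = A$, so that all the two-sided hypotheses appearing in those results collapse onto the one-sided assumptions made here (that $D_0$ is invariant under $A$).

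First I would settle the equivalence (i)$\Longleftrightarrow$(ii) directly. Since $Q_0(u,v) = \langle L_0 u, v\rangle$ on $D_0$ and $A = A^\ast$, assuming (i) gives $Q_0(Ax,y) = \langle L_0 Ax, y\rangle = \langle A L_0 x, y\rangle = \langle L_0 x, Ay\rangle = Q_0(x,Ay)$ and, inserting $A^2 = A$ once more, $Q_0(Ax,Ay) = \langle A L_0 x, Ay\rangle = \langle L_0 x, Ay\rangle = Q_0(x,Ay)$, which is (ii). Conversely, if (ii) holds then $\langle L_0 Ax - A L_0 x, y\rangle = Q_0(Ax,y) - \langle L_0 x, Ay\rangle = Q_0(Ax,y) - Q_0(x,Ay) = 0$ for every $y\in D_0$, and density of $D_0$ (it is the domain of the symmetric operator $L_0$) yields (i).

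Next, granting (i) (equivalently (ii)), the hypotheses of Lemma~\ref{Symmetry-Friedrich} hold with $A^\ast = A$, and I would check its condition~(iii) with the constant $C = 1$: decomposing $x = Ax + (1-A)x$ in $D_0$ and using (ii) to see that the cross terms $Q_0(Ax,(1-A)x)$ and $Q_0((1-A)x,Ax)$ vanish, one obtains $Q_0(x,x) = Q_0(Ax,Ax) + Q_0((1-A)x,(1-A)x) \geq Q_0(Ax,Ax)$, and symmetrically for $A^\ast = A$. Hence Lemma~\ref{Symmetry-Friedrich} delivers its assertions (i) and (ii): $D(L)$ is invariant under $A$ with $ALx = LAx$ on $D(L)$, and $D(Q)$ is invariant under $A$ with $Q(Ax,y) = Q(x,A^\ast y) = Q(x,Ay)$. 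The first of these is exactly the statement that $U$ reduces $L$, by the lemma immediately preceding this corollary applied to the (selfadjoint, hence symmetric) operator $L$, which is~(iv); and the second, combined with $Q(Ax,Ay) = Q(x,A^\ast Ay) = Q(x,Ay)$, is~(iii). Thus (ii)$\Rightarrow$(iii) and (ii)$\Rightarrow$(iv).

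Finally I would close the loops. The chain (iv)$\Longleftrightarrow$(v)$\Longleftrightarrow$(vi) is immediate from Theorem~\ref{abstract-characterization} for the selfadjoint operator $L$, since (iv) unwinds to ``$D(L)$ invariant under $A$ and $LAx = ALx$'', which is (i) of that theorem, while (v), (vi) are its (iv), (v). For (iii)$\Rightarrow$(ii) one simply restricts the identities in (iii) to $x,y\in D_0\subseteq D(Q)$, where $Q$ agrees with $Q_0$. For (v)$\Rightarrow$(i) one uses that the Friedrichs extension satisfies $L_0\subseteq L$ and $D_0\subseteq D(L)$: for $x\in D_0$ we have $Ax\in D_0\subseteq D(L)$, so $L_0 Ax = LAx = ALx = AL_0 x$. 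I do not anticipate a serious obstacle; the only point that needs genuine care is the $C=1$ estimate feeding Lemma~\ref{Symmetry-Friedrich}(iii) — i.e.\ the $Q_0$-orthogonality of the decomposition $x = Ax + (1-A)x$ — which is precisely where the projection hypothesis, rather than a general bounded operator, enters.
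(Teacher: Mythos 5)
Your proof is correct and follows essentially the same path as the paper: both hinge on the $Q_0$-orthogonal decomposition $x = Ax + (1-A)x$ to obtain the bound $Q_0(Ax,Ax)\leq Q_0(x,x)$ (i.e.\ condition (iii) of Lemma~\ref{Symmetry-Friedrich} with $C=1$), and then invoke Lemma~\ref{Symmetry-Friedrich} together with Theorem~\ref{abstract-characterization} for the remaining equivalences. The only (immaterial) difference is the routing of implications: the paper gets (iii)$\Leftrightarrow$(iv) directly from Lemma~\ref{Symmetry-Friedrich}(i)$\Leftrightarrow$(ii), whereas you close the cycle via (v)$\Rightarrow$(i) using $L_0\subseteq L$.
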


\begin{proof}
Obviously, (i) and (ii) are equivalent. The equivalence of (iii) and  (iv) follows from  the equivalence of (i) and (ii) in Lemma \ref{Symmetry-Friedrich}. The equivalence between (iv), (v) and (vi) follows immediately from Theorem \ref{abstract-characterization}. The implication (iii)$\Longrightarrow$(ii) is clear (as $A D_0 \subseteq D_0$). It remains to show (ii)$\Longrightarrow$(iii):  A direct calculation using (ii) gives for all $x\in D_0$ that
\begin{eqnarray*}
Q_0 (x,x)&=& Q_0 ((A + (1-A))x, x)\\
&=& Q_0 (Ax,x) + Q_0 ((1-A)x ,x)\\
&=& Q_0 (Ax,Ax) + Q_0 ((1-A)x, (1-A)x).
\end{eqnarray*}
This shows
$$ Q_0 (Ax,Ax) \leq Q_0 (x,x)$$
for all $x\in D_0$. Now, the implication (iii)$\Longrightarrow$(ii) from  Lemma \ref{Symmetry-Friedrich}  gives (iii).
\end{proof}

\medskip

\textbf{Acknowledgements.}  The authors are grateful to J{\'o}zef Dodziuk for his continued support.  MK and RW would like to thank the Group of Mathematical Physics of the University of Lisbon for their generous backing while parts of this work were completed.  In particular, RW extends his gratitude to Pedro Freitas and Jean-Claude Zambrini for their encouragement and assistance.  RW gratefully acknowledges financial support of the FCT in the forms of grant SFRH/BPD/45419/2008 and project PTDC/MAT/101007/2008.

\begin{bibdiv}
\begin{biblist}
\bib{BB10}{article}{
   author={B{\"a}r, Christian},
   author={Pacelli Bessa, G.},
   title={Stochastic completeness and volume growth},
   journal={Proc. Amer. Math. Soc.},
   volume={138},
   date={2010},
   number={7},
   pages={2629--2640},
   issn={0002-9939},
   review={\MR{2607893}},
   doi={10.1090/S0002-9939-10-10281-0},
}

\bib{BP06}{article}{
   author={Barroso, Cleon S.},
   author={Pacelli Bessa, G.},
   title={Lower bounds for the first Laplacian eigenvalue of geodesic balls
   of spherically symmetric manifolds},
   journal={Int. J. Appl. Math. Stat.},
   volume={6},
   date={2006},
   number={D06},
   pages={82--86},
   issn={0973-1377},
   review={\MR{2338140 (2008e:58038)}},
}

\bib{BJL}{article}{
   author={Bauer, Frank},
   author={Jost, J{\"u}rgen},
   author={Liu, Shiping},
   title={Ollivier-Ricci curvature and the spectrum of the normalized graph Laplace operator},
   status={preprint},
   eprint={	arXiv:1105.3803v1 [math.CO]},
}

\bib{BaPe01}{article}{
   author={Baues, O.},
   author={Peyerimhoff, N.},
   title={Curvature and geometry of tessellating plane graphs},
   journal={Discrete Comput. Geom.},
   volume={25},
   date={2001},
   number={1},
   pages={141--159},
   issn={0179-5376},
   review={\MR{1797301 (2001k:57004)}},
}

\bib{BaPe06}{article}{
   author={Baues, Oliver},
   author={Peyerimhoff, Norbert},
   title={Geodesics in non-positively curved plane tessellations},
   journal={Adv. Geom.},
   volume={6},
   date={2006},
   number={2},
   pages={243--263},
   issn={1615-715X},
   review={\MR{2243299 (2007f:52043)}},
   doi={10.1515/ADVGEOM.2006.014},
}

\bib{BD}{article}{
   author={Beurling, A.},
   author={Deny, J.},
   title={Espaces de Dirichlet. I. Le cas \'el\'ementaire},
   language={French},
   journal={Acta Math.},
   volume={99},
   date={1958},
   pages={203--224},
   issn={0001-5962},
   review={\MR{0098924 (20 \#5373)}},
}

\bib{BMS88}{article}{
   author={Biggs, N. L.},
   author={Mohar, Bojan},
   author={Shawe-Taylor, John},
   title={The spectral radius of infinite graphs},
   journal={Bull. London Math. Soc.},
   volume={20},
   date={1988},
   number={2},
   pages={116--120},
   issn={0024-6093},
   review={\MR{924236 (89a:05103)}},
   doi={10.1112/blms/20.2.116},
}

\bib{BStu}{article}{
   author={Bonciocat, Anca-Iuliana},
   author={Sturm, Karl-Theodor},
   title={Mass transportation and rough curvature bounds for discrete
   spaces},
   journal={J. Funct. Anal.},
   volume={256},
   date={2009},
   number={9},
   pages={2944--2966},
   issn={0022-1236},
   review={\MR{2502429 (2010i:53066)}},
   doi={10.1016/j.jfa.2009.01.029},
}

\bib{Br81}{article}{
   author={Brooks, Robert},
   title={A relation between growth and the spectrum of the Laplacian},
   journal={Math. Z.},
   volume={178},
   date={1981},
   number={4},
   pages={501--508},
   issn={0025-5874},
   review={\MR{638814 (83a:58089)}},
   doi={10.1007/BF01174771},
}

\bib{Br91}{article}{
   author={Brooks, Robert},
   title={The spectral geometry of $k$-regular graphs},
   journal={J. Anal. Math.},
   volume={57},
   date={1991},
   pages={120--151},
   issn={0021-7670},
   review={\MR{1191744 (94a:05216)}},
}

\bib{ChKa91}{article}{
   author={Chavel, Isaac},
   author={Karp, Leon},
   title={Large time behavior of the heat kernel: the parabolic
   $\lambda$-potential alternative},
   journal={Comment. Math. Helv.},
   volume={66},
   date={1991},
   number={4},
   pages={541--556},
   issn={0010-2571},
   review={\MR{1129796 (93a:58160)}},
   doi={10.1007/BF02566664},
}

\bib{ChYau81}{article}{
   author={Cheeger, Jeff},
   author={Yau, Shing Tung},
   title={A lower bound for the heat kernel},
   journal={Comm. Pure Appl. Math.},
   volume={34},
   date={1981},
   number={4},
   pages={465--480},
   issn={0010-3640},
   review={\MR{615626 (82i:58065)}},
   doi={10.1002/cpa.3160340404},
}

\bib{Ch75}{article}{
   author={Cheng, Shiu Yuen},
   title={Eigenvalue comparison theorems and its geometric applications},
   journal={Math. Z.},
   volume={143},
   date={1975},
   number={3},
   pages={289--297},
   issn={0025-5874},
   review={\MR{0378001 (51 \#14170)}},
}

\bib{Do83}{article}{
   author={Dodziuk, J{\'o}zef},
   title={Maximum principle for parabolic inequalities and the heat flow on
   open manifolds},
   journal={Indiana Univ. Math. J.},
   volume={32},
   date={1983},
   number={5},
   pages={703--716},
   issn={0022-2518},
   review={\MR{711862 (85e:58140)}},
   doi={10.1512/iumj.1983.32.32046},
}

\bib{Do84}{article}{
   author={Dodziuk, J{\'o}zef},
   title={Difference equations, isoperimetric inequality and transience of
   certain random walks},
   journal={Trans. Amer. Math. Soc.},
   volume={284},
   date={1984},
   number={2},
   pages={787--794},
   issn={0002-9947},
   review={\MR{743744 (85m:58185)}},
   doi={10.2307/1999107},
}

\bib{D06}{article}{
   author={Dodziuk, J{\'o}zef},
   title={Elliptic operators on infinite graphs},
   conference={
      title={Analysis, geometry and topology of elliptic operators},
   },
   book={
      publisher={World Sci. Publ., Hackensack, NJ},
   },
   date={2006},
   pages={353--368},
   review={\MR{2246774 (2008f:58019)}},
}

\bib{DoKa88}{article}{
   author={Dodziuk, J{\'o}zef},
   author={Karp, Leon},
   title={Spectral and function theory for combinatorial Laplacians},
   conference={
      title={Geometry of random motion},
      address={Ithaca, N.Y.},
      date={1987},
   },
   book={
      series={Contemp. Math.},
      volume={73},
      publisher={Amer. Math. Soc.},
      place={Providence, RI},
   },
   date={1988},
   pages={25--40},
   review={\MR{954626 (89h:58220)}},
}

\bib{DKe86}{article}{
   author={Dodziuk, J.},
   author={Kendall, W. S.},
   title={Combinatorial Laplacians and isoperimetric inequality},
   conference={
      title={From local times to global geometry, control and physics
      (Coventry, 1984/85)},
   },
   book={
      series={Pitman Res. Notes Math. Ser.},
      volume={150},
      publisher={Longman Sci. Tech.},
      place={Harlow},
   },
   date={1986},
   pages={68--74},
   review={\MR{894523 (88h:58118)}},
}

\bib{DM}{article}{
   author={Dodziuk, J{\'o}zef},
   author={Mathai, Varghese},
   title={Kato's inequality and asymptotic spectral properties for discrete
   magnetic Laplacians},
   conference={
      title={The ubiquitous heat kernel},
   },
   book={
      series={Contemp. Math.},
      volume={398},
      publisher={Amer. Math. Soc.},
      place={Providence, RI},
   },
   date={2006},
   pages={69--81},
   review={\MR{2218014 (2007c:81054)}},
}

\bib{Fel57}{article}{
   author={Feller, William},
   title={On boundaries and lateral conditions for the Kolmogorov
   differential equations},
   journal={Ann. of Math. (2)},
   volume={65},
   date={1957},
   pages={527--570},
   issn={0003-486X},
   review={\MR{0090928 (19,892b)}},
}

\bib{For}{article}{
   author={Forman, Robin},
   title={Bochner's method for cell complexes and combinatorial Ricci
   curvature},
   journal={Discrete Comput. Geom.},
   volume={29},
   date={2003},
   number={3},
   pages={323--374},
   issn={0179-5376},
   review={\MR{1961004 (2004a:52022)}},
   doi={10.1007/s00454-002-0743-x},
}

\bib{Fuj96a}{article}{
   author={Fujiwara, Koji},
   title={Growth and the spectrum of the Laplacian of an infinite graph},
   journal={Tohoku Math. J. (2)},
   volume={48},
   date={1996},
   number={2},
   pages={293--302},
   issn={0040-8735},
   review={\MR{1387821 (97b:58158)}},
   doi={10.2748/tmj/1178225382},
}

\bib{Fuj96b}{article}{
   author={Fujiwara, Koji},
   title={The Laplacian on rapidly branching trees},
   journal={Duke Math. J.},
   volume={83},
   date={1996},
   number={1},
   pages={191--202},
   issn={0012-7094},
   review={\MR{1388848 (97d:58199)}},
   doi={10.1215/S0012-7094-96-08308-8},
}

\bib{FOT}{book}{
   author={Fukushima, Masatoshi},
   author={{\=O}shima, Y{\=o}ichi},
   author={Takeda, Masayoshi},
   title={Dirichlet forms and symmetric Markov processes},
   series={de Gruyter Studies in Mathematics},
   volume={19},
   publisher={Walter de Gruyter \& Co.},
   place={Berlin},
   date={1994},
   pages={x+392},
   isbn={3-11-011626-X},
   review={\MR{1303354 (96f:60126)}},
}

\bib{Gri99}{article}{
   author={Grigor{\cprime}yan, Alexander},
   title={Analytic and geometric background of recurrence and non-explosion
   of the Brownian motion on Riemannian manifolds},
   journal={Bull. Amer. Math. Soc. (N.S.)},
   volume={36},
   date={1999},
   number={2},
   pages={135--249},
   issn={0273-0979},
   review={\MR{1659871 (99k:58195)}},
   doi={10.1090/S0273-0979-99-00776-4},
}

\bib{GHM}{article}{
   author = {Grigor{\cprime}yan, Alexander},
   author = {Huang, Xueping},
   author =  {Masamune, Jun},
   affiliation = {Department of Mathematics, University of Bielefeld, 33501 Bielefeld, Germany},
   title = {On stochastic completeness of jump processes},
   journal = {Mathematische Zeitschrift},
   publisher = {Springer Berlin / Heidelberg},
   issn = {0025-5874},
   keyword = {Mathematics and Statistics},
   pages = {1-29},
   url = {http://dx.doi.org/10.1007/s00209-011-0911-x},
   note = {10.1007/s00209-011-0911-x},
   year = {2011}
}

\bib{HK}{article}{
   author={Haeseler, Sebastian},
   author={Keller, Matthias},
   title={Generalized solutions and spectrum for Dirichlet forms on graphs},
      conference={
       title={Random Walks, Boundaries and Spectra},
    },
book={
      series={Progress in Probability},
      volume={64},
      publisher={Birkh\"auser Verlag},
      place={Basel},
      editor={Lenz, Daniel},
      editor={Sobieczky, Florian},
      editor={Woess, Wolfgang},
      isbn={978-3-0346-0243-3}
 },
     date={2011},
     pages={181--199}
 }

\bib{HKLW}{article}{
   author={Haeseler, Sebastian},
   author={Keller, Matthias},
   author={Lenz, Daniel},
   author={Wojciechowski, Rados{\l}aw},
   title={Laplacians on infinite graphs: Dirichlet and Neumann boundary conditions},
   journal={J. Spectr. Theory},
   volume={2},
   date={2012},
   number={4},
   pages={397--432},
}

\bib{Ha09}{article}{
   author={Harmer, Mark},
   title={Discreteness of the spectrum of the Laplacian and stochastic
   incompleteness},
   journal={J. Geom. Anal.},
   volume={19},
   date={2009},
   number={2},
   pages={358--372},
   issn={1050-6926},
   review={\MR{2481965 (2010e:58033)}},
   doi={10.1007/s12220-008-9055-6},
}

\bib{Has60}{article}{
   author={Has{\cprime}minski{\u\i}, R. Z.},
   title={Ergodic properties of recurrent diffusion processes and
   stabilization of the solution of the Cauchy problem for parabolic
   equations},
   language={Russian, with English summary},
   journal={Teor. Verojatnost. i Primenen.},
   volume={5},
   date={1960},
   pages={196--214},
   issn={0040-361x},
   review={\MR{0133871 (24 \#A3695)}},
}

\bib{Hi01}{article}{
   author={Higuchi, Yusuke},
   title={Combinatorial curvature for planar graphs},
   journal={J. Graph Theory},
   volume={38},
   date={2001},
   number={4},
   pages={220--229},
   issn={0364-9024},
   review={\MR{1864922 (2002i:05109)}},
   doi={10.1002/jgt.10004},
}

\bib{Hi03}{article}{
   author={Higuchi, Yusuke},
   title={Boundary area growth and the spectrum of discrete Laplacian},
   journal={Ann. Global Anal. Geom.},
   volume={24},
   date={2003},
   number={3},
   pages={201--230},
   issn={0232-704X},
   review={\MR{1996767 (2004e:58054)}},
   doi={10.1023/A:1024733021533},
}

\bib{Hu}{article}{
   author={Huang, Xueping},
   title={A note on Stochastic incompleteness for graphs and weak Omori-Yau maximum principle},
   journal={J. Math. Anal. Appl.},
   volume={379},
   date={2011},
   number={2},
   pages={764--782},
   doi={10.1016/j.jmaa.2011.02.009},
}
	
\bib{Ich82}{article}{
   author={Ichihara, Kanji},
   title={Curvature, geodesics and the Brownian motion on a Riemannian
   manifold. II. Explosion properties},
   journal={Nagoya Math. J.},
   volume={87},
   date={1982},
   pages={115--125},
   issn={0027-7630},
   review={\MR{676590 (84m:58166b)}},
}

\bib{JL}{article}{
   author={Jost, J{\"u}rgen},
   author={Liu, Shiping},
   title={Ollivier's Ricci curvature, local clustering and curvature dimension inequalities on graphs},
   status={preprint},
   eprint={arXiv:1103.4037v2 [math.CO]},
}

\bib{Kel10}{article}{
   author={Keller, Matthias},
   title={The essential spectrum of the Laplacian on rapidly branching
   tessellations},
   journal={Math. Ann.},
   volume={346},
   date={2010},
   number={1},
   pages={51--66},
   issn={0025-5831},
   review={\MR{2558886}},
   doi={10.1007/s00208-009-0384-y},
}

\bib{KelLen09}{article}{
   author={Keller, Matthias},
   author={Lenz, Daniel},
   title={Dirichlet forms and stochastic completeness of graphs and subgraphs},
   journal={J. Reine Angew. Math.},
   volume={666},
   date={2012},
   pages={189--223},
   issn={0075-4102},
   doi={10.1515/CRELLE.2011.122}
 }

\bib{KelLen10}{article}{
   author={Keller, Matthias},
   author={Lenz, Daniel},
   title={Unbounded Laplacians on graphs: basic spectral properties and the
   heat equation},
   journal={Math. Model. Nat. Phenom.},
   volume={5},
   date={2010},
   number={4},
   pages={198--224},
   issn={0973-5348},
   review={\MR{2662456}},
   doi={10.1051/mmnp/20105409},
}

\bib{KLVW}{article}{
   author={Keller, Matthias},
   author={Lenz, Daniel},
   author={Vogt, Hendrik},
   author={Wojciechowski, Rados{\l}aw},
   title={Note on basic features of large time behaviour of heat kernels},
   status={preprint},
   date={2011},
   eprint={arXiv:1101.0373v1 [math.FA]},
}

\bib{KP}{article}{
   author={Keller, Matthias}
   author={Peyerimhoff, Norbert},
   title = {Cheeger constants, growth and spectrum of locally tessellating planar graphs},
   journal = {Math. Z.},
   publisher = {Springer Berlin / Heidelberg},
   issn = {0025-5874},
   pages = {871-886},
   volume = {268},
   issue = {3},
   url = {http://dx.doi.org/10.1007/s00209-010-0699-0},
   note = {10.1007/s00209-010-0699-0},
   year = {2011}
}

\bib{Li86}{article}{
   author={Li, Peter},
   title={Large time behavior of the heat equation on complete manifolds
   with nonnegative Ricci curvature},
   journal={Ann. of Math. (2)},
   volume={124},
   date={1986},
   number={1},
   pages={1--21},
   issn={0003-486X},
   review={\MR{847950 (87k:58259)}},
   doi={10.2307/1971385},
}

\bib{LY}{article}{
   author={Lin, Yong},
   author={Yau, Shing-Tung},
   title={Ricci curvature and eigenvalue estimate on locally finite graphs},
   journal={Math. Res. Lett.},
   volume={17},
   date={2010},
   number={2},
   pages={343--356},
   issn={1073-2780},
   review={\MR{2644381 (2011e:05068)}},
}

\bib{LV}{article}{
   author={Lott, John},
   author={Villani, C{\'e}dric},
   title={Ricci curvature for metric-measure spaces via optimal transport},
   journal={Ann. of Math. (2)},
   volume={169},
   date={2009},
   number={3},
   pages={903--991},
   issn={0003-486X},
   review={\MR{2480619 (2010i:53068)}},
   doi={10.4007/annals.2009.169.903},
}

\bib{Mo88}{article}{
   author={Mohar, Bojan},
   title={Isoperimetric inequalities, growth, and the spectrum of graphs},
   journal={Linear Algebra Appl.},
   volume={103},
   date={1988},
   pages={119--131},
   issn={0024-3795},
   review={\MR{943998 (89k:05071)}},
   doi={10.1016/0024-3795(88)90224-8},
}

\bib{Oll}{article}{
   author={Ollivier, Yann},
   title={Ricci curvature of Markov chains on metric spaces},
   journal={J. Funct. Anal.},
   volume={256},
   date={2009},
   number={3},
   pages={810--864},
   issn={0022-1236},
   review={\MR{2484937 (2010j:58081)}},
   doi={10.1016/j.jfa.2008.11.001},
}

\bib{P04}{article}{
   author={Pinchover, Yehuda},
   title={Large time behavior of the heat kernel},
   journal={J. Funct. Anal.},
   volume={206},
   date={2004},
   number={1},
   pages={191--209},
   issn={0022-1236},
   review={\MR{2024351 (2004k:35158)}},
   doi={10.1016/S0022-1236(03)00110-1},
}

\bib{RS78}{book}{
   author={Reed, Michael},
   author={Simon, Barry},
   title={Methods of modern mathematical physics. IV. Analysis of operators},
   publisher={Academic Press [Harcourt Brace Jovanovich Publishers]},
   place={New York},
   date={1978},
   pages={xv+396},
   isbn={0-12-585004-2},
   review={\MR{0493421 (58 \#12429c)}},
}

\bib{Reu57}{article}{
   author={Reuter, G. E. H.},
   title={Denumerable Markov processes and the associated contraction
   semigroups on $l$},
   journal={Acta Math.},
   volume={97},
   date={1957},
   pages={1--46},
   issn={0001-5962},
   review={\MR{0102123 (21 \#918)}},
}

\bib{StuI}{article}{
   author={Sturm, Karl-Theodor},
   title={On the geometry of metric measure spaces. I},
   journal={Acta Math.},
   volume={196},
   date={2006},
   number={1},
   pages={65--131},
   issn={0001-5962},
   review={\MR{2237206 (2007k:53051a)}},
   doi={10.1007/s11511-006-0002-8},
}

\bib{StuII}{article}{
   author={Sturm, Karl-Theodor},
   title={On the geometry of metric measure spaces. II},
   journal={Acta Math.},
   volume={196},
   date={2006},
   number={1},
   pages={133--177},
   issn={0001-5962},
   review={\MR{2237207 (2007k:53051b)}},
   doi={10.1007/s11511-006-0003-7},
}
		
\bib{Ura97}{article}{
   author={Urakawa, Hajime},
   title={Heat kernel and Green kernel comparison theorems for infinite
   graphs},
   journal={J. Funct. Anal.},
   volume={146},
   date={1997},
   number={1},
   pages={206--235},
   issn={0022-1236},
   review={\MR{1446380 (98f:31013)}},
   doi={10.1006/jfan.1996.3030},
}

\bib{Ura99}{article}{
   author={Urakawa, Hajime},
   title={Eigenvalue comparison theorems of the discrete Laplacians for a
   graph},
   journal={Geom. Dedicata},
   volume={74},
   date={1999},
   number={1},
   pages={95--112},
   issn={0046-5755},
   review={\MR{1671272 (2000d:47058)}},
   doi={10.1023/A:1005008324245},
}

\bib{Ura00}{article}{
   author={Urakawa, Hajime},
   title={The spectrum of an infinite graph},
   journal={Canad. J. Math.},
   volume={52},
   date={2000},
   number={5},
   pages={1057--1084},
   issn={0008-414X},
   review={\MR{1782338 (2001k:05146)}},
}

\bib{Web10}{article}{
   author={Weber, Andreas},
   title={Analysis of the physical Laplacian and the heat flow on a locally
   finite graph},
   journal={J. Math. Anal. Appl.},
   volume={370},
   date={2010},
   number={1},
   pages={146--158},
   issn={0022-247X},
   review={\MR{2651136}},
   doi={10.1016/j.jmaa.2010.04.044},
}

\bib{Woj08}{book}{
   author={Wojciechowski, Rados{\l}aw K.},
   title={Stochastic completeness of graphs},
   note={Thesis (Ph.D.)--City University of New York},
   publisher={ProQuest LLC, Ann Arbor, MI},
   date={2008},
   pages={87},
   isbn={978-0549-58579-4},
   review={\MR{2711706}},
}

\bib{Woj09}{article}{
   author={Wojciechowski, Rados{\l}aw K.},
   title={Heat kernel and essential spectrum of infinite graphs},
   journal={Indiana Univ. Math. J.},
   volume={58},
   date={2009},
   number={3},
   pages={1419--1441},
   issn={0022-2518},
   review={\MR{2542093}},
   doi={10.1512/iumj.2009.58.3575},
}

\bib{Woj10}{article}{
   author={Wojciechowski, Rados{\l}aw K.},
   title={Stochastically incomplete manifolds and graphs},
    conference={
       title={Random Walks, Boundaries and Spectra},
    },
book={
      series={Progress in Probability},
      volume={64},
      publisher={Birkh\"auser Verlag},
      place={Basel},
      editor={Lenz, Daniel},
      editor={Sobieczky, Florian},
      editor={Woess, Wolfgang},
      isbn={978-3-0346-0243-3}
 },
     date={2011},
     pages={163--179}
 }

\bib{Zuk99}{article}{
   author={{\.Z}uk, Andrzej},
   title={A generalized F\o lner condition and the norms of random walk
   operators on groups},
   journal={Enseign. Math. (2)},
   volume={45},
   date={1999},
   number={3-4},
   pages={321--348},
   issn={0013-8584},
   review={\MR{1742336 (2001m:43003)}},
}

\end{biblist}
\end{bibdiv}
\end{document}